\theoremstyle{definition}
\newcommand{\scr}[1]{\mathscr #1}
\definecolor{wco}{rgb}{0.5,0.2,0.3}
\numberwithin{equation}{section}
\newcommand{\ua}{\uparrow}
\title{{\bf Forward-backward stochastic differential equations on tensor fields and  application to Navier-Stokes equations
on Riemannian manifolds}}
\author{
{\bf    Xin Chen$^{a)}$, Ana Bela Cruzeiro$^{b)}$, Wenjie Ye$^{c)}$, Qi Zhang$^{d,e)}$} 
\thanks{E-mail address: chenxin217@sjtu.edu.cn (X. Chen), ana.cruzeiro@tecnico.ulisboa.pt (A.B. Cruzeiro), yewenjie@amss.ac.cn(W. Ye), qzh@fudan.edu.cn (Q. Zhang)}
\\
\footnotesize {$^{a)}$ School of Mathematical Sciences, Shanghai Jiaotong University, Shanghai 200240, China}\\
\footnotesize {$^{b)}$ GFMUL and Dep. Mathematics Univ. Lisbon, Av. Rovisco Pais, 1049-001 Lisboa, Portugal}\\
\footnotesize {$^{c)}$ Academy of Mathematics and Systems Science, Chinese Academy of Sciences, Beijing 100190, China}\\
\footnotesize {$^{d)}$ School of Mathematical Sciences, Fudan University, Shanghai 200433, China}\\
\footnotesize {$^{e)}$ Laboratory of Mathematics for Nonlinear Science, Fudan University, Shanghai 200433, China}\\
}
\begin{document}

\allowdisplaybreaks

\def\R{\mathbb R} \def\Z{\mathbb Z} \def\ff{\frac} \def\ss{\sqrt}
\def\dd{\delta} \def\DD{\Delta} \def\vv{\varepsilon} \def\rr{\rho}
\def\<{\langle} \def\>{\rangle} \def\GG{\Gamma} \def\gg{\gamma}
\def\ll{\lambda} \def\LL{\Lambda} \def\nn{\nabla} \def\pp{\partial}
\def\d{\text{\rm{d}}} \def\loc{\text{\rm{loc}}} \def\bb{\beta} \def\aa{\alpha} \def\D{\mathbb D}
\def\E{\mathbb E}
\def\p{\mathbf P}
\def\h{\mathbf H}
\def\a{\alpha}
\def\O{\Omega}
\newcommand{\Ex}{{\bf E}}
\def\si{\sigma} \def\ess{\text{\rm{ess}}}
\def\beg{\begin} \def\beq{\beg}  \def\F{\scr F}
\def\ric{\text{\rm{Ric}}} \def\Hess{\text{\rm{Hess}}}\def\B{\scr B}
\def\e{\varepsilon} \def\ua{\underline a} \def\OO{\Omega} \def\b{\mathbf b}
\def\oo{\omega}     \def\tt{\tilde} \def\Ric{\text{\rm{Ric}}}
\def\cut{\text{\rm{cut}}}
\def\P{\mathbb P}
\def\ifn{I_n(f^{\bigotimes n})}
\def\fff{f(x_1)\dots f(x_n)} \def\ifm{I_m(g^{\bigotimes m})} \def\ee{\varepsilon}
\def\C{\mathbb C}
\def\CC{\widetilde{\mathbf{C}}}
\def\B{\scr B}
\def\S{\scr S}
\def\M{\scr M}
\def\F{\scr F}
\def\ll{\lambda}
\def\X{\scr X}
\def\T{\mathbb T}
\def\A{\scr A}
\def\LL{\scr L}
\def\gap{\mathbf{gap}}
\def\div{\text{\rm div}}
\def\dist{\text{\rm dist}}
\def\cut{\text{\rm cut}}
\def\supp{\text{\rm supp}}
\def\Var{\text{\rm Var}}
\def\p{\mathbf{P}}
\def\Cov{\text{\rm Cov}}
\def\Cut{\text{\rm Cut}}
\def\le{\leqslant}
\def\ge{\geqslant}
\def\I{\scr I}
\def\coth{\text{\rm coth}}
\def\Dom{\text{\rm Dom}}
\def\Cap{\text{\rm Cap}}
\def\Ent{\text{\rm Ent}}
\def\sect{\text{\rm sect}}\def\H{\mathbb H}
\def\g{\tilde {g}}
\def\o{\omega}
\def\u{\tilde{u}}
\def\c{\tilde{\theta}}\def\w{\tilde{\omega}}
\def\om{\tilde{\Omega}}\def\v{\varepsilon}
\def\U{\tilde{U}}
\def\b{\tilde \beta}
\def\S{\scr S}
\def\F{\scr F}
\def\L{\Lambda}

\newtheorem{theorem}{Theorem}[section]
\newtheorem{lemma}[theorem]{Lemma}
\newtheorem{proposition}[theorem]{Proposition}
\newtheorem{corollary}[theorem]{Corollary}

\theoremstyle{definition}
\newtheorem{definition}[theorem]{Definition}
\newtheorem{example}[theorem]{Example}
\newtheorem{remark}[theorem]{Remark}
\newtheorem{assumption}[theorem]{Assumption}
\newtheorem*{ack}{Acknowledgement}
\renewcommand{\theequation}{\thesection.\arabic{equation}}
\numberwithin{equation}{section}

\maketitle

\rm

\vskip 10mm

\begin{abstract}
In this paper we introduce a class of forward-backward stochastic differential equations on tensor fields
of Riemannian manifolds, which are related to semi-linear parabolic partial differential equations on tensor fields. Moreover, we will
use these forward-backward stochastic differential equations to give a stochastic characterization of incompressible Navier-Stokes equations
on Riemannian manifolds, where some extra conditions used  in \cite{FL} are not required.
\end{abstract}

\vskip 3mm

\section{Introduction}

Bismut \cite{Bi} first studied linear backward stochastic differential equations (written as BSDE for simplicity through this paper) which were originated from some
 stochastic control problems. A breakthrough
was made by Pardoux and Peng \cite{PP1}, who proved existence and uniqueness of  solutions for general non-linear BSDEs
 under uniform Lipschitz continuity conditions on their generators. In \cite{PP2} Pardoux and Peng described an important
connection between forward-backward stochastic differential equations
(written as FBSDE in this paper) and  parabolic semi-linear systems.
Peng \cite{Pe} also
showed the correspondence between infinite horizon FBSDEs and elliptic semi-linear systems.
We refer the
reader to \cite{CCQ,CQ,FQT,FWZ,PT,WY,ZZ} and references therein
for the probabilistic representation of several non-linear PDEs through FBSDEs.

Subsequently, many attempts to extend  BSDE theory from the Euclidean space to manifolds were made. Darling \cite{Da1} introduced
a double convex condition under which the global (in time) existence of unique solutions for BSDEs on manifolds
having only one local chart was established. Xing and Zitkovi\'c  \cite{XZ} proved the existence of unique Markovian
type solutions for manifold-valued FBSDEs based on a single convex condition, where the manifolds were still assumed to possess only
one local chart.
In \cite{B1,B2}, Blache studied
BSDEs on general Riemannian manifolds whose solutions were restricted to only one local chart. Chen and Ye \cite{CY1} gave a
new definition of manifold-valued BSDEs which were not necessarily situated in only one local chart, and
the existence of these manifold-valued solutions was also proved without any convex condition in \cite{CY1}.
Moreover, through such kind of manifold-valued FBSDEs, a probabilistic representation for heat flows of harmonic map associated
with time-changing Riemannian metrics was established by Chen and Ye \cite{CY}. See also Estrade and Pontier \cite{EP} and Chen and Cruzeiro \cite{CC}
on the study on Lie-group valued BSDEs.

As explained in \cite{B1,CY}, manifold-valued FBSDEs mentioned above are related to
a class of semi-linear systems with manifold-valued solutions, such as harmonic maps or heat flows of
harmonic map. So such kind of  manifold-valued FBSDEs can not be used
to study non-linear systems with tensor field-valued solutions on Riemannian manifolds, such as
Ricci flows or Navier-Stokes equations (written as NSE from now on) on manifolds. Following this observation, we
introduce  in this paper a kind of FBSDEs (see e.g. the equation \eqref{e2-7} below) on tensor fields of Riemannian manifolds. Then an equivalence
between \eqref{e2-7} and \eqref{e2-8}, which are
tensor fields-valued semi-linear systems on Riemannian manifolds, will be shown, see Theorem \ref{t2-1} and
Theorem \ref{t2-2} below for the details. Furthermore, using these tensor field-valued FBSDEs, we will also give
a probabilistic representation for NSEs
\eqref{e3-2} on Riemannian manifolds in Theorem \ref{t4-1}, from which
 local existence of unique solutions in Sobolev spaces to \eqref{e3-2} will be proved in Theorem \ref{t4-2}.

We also mention some  results concerning the study of the NSEs on Riemannian manifolds. Arnaudon and Cruzeiro \cite{AC}
characterized solutions of NSEs on Riemannian manifolds as critical points of action functionals induced by
corresponding stochastic Lagrangian paths (on Riemannian manifolds). In \cite{ACF}, Arnaudon, Fang and Cruzeiro studied minimal
points of these action functionals as well as their  equivalences to NSEs on Riemannian manifolds.
Constantin and Iyer \cite{CI} introduced a kind of stochastic functional systems using flows generated
by stochastic Lagrangian paths and used them to give a stochastic representation for NSEs on Euclidean space.
Fang and Luo \cite{FL} made
a crucial observation on the behaviour of Lie derivatives for vector fields associated with stochastic Lagrangian paths and applied it to extend the arguments in \cite{CI} to the case of NSEs on  Riemannian manifolds. See also Fang \cite{F} for the probabilistic
representation for NSEs on Riemannian manifolds associated with some special second order differential operators.
Based on expressions given in \cite{CI,F}, local existence of unique solutions in $C^k$ spaces to NSEs on Riemannian manifolds has been obtained
by Ledesma \cite{Le}. We also cite Cruzeiro and Shamarova \cite{CS} for the study of the relation between solutions of NSEs on torus and those of some related
FBSDEs.

We now highlight some aspects of our results.

\begin{itemize}
\item [(i)] As explained in \cite{CY1}, the main difficulty to deal with  manifold-valued BSDEs is the lack of
linear structure, which is necessary to define  It\^o integrals in equations. For the FBSDEs on tensor fields
studied in this paper, we will use stochastic parallel translations induced by forward equations, which
are SDEs on orthonormal frame bundles over base manifolds, in order to define solutions of backward equations (at different times)
in the same linear space, so that we can define It\^o integrals by the linear structure in such space.
\medskip
\item [(ii)] Stochastic Lagrangian paths have been used
in  \cite{AC,ACF,FL,F}  to study NSEs on Riemannian manifolds. In all these formulations,
there are crucial restrictions on
the coefficients of corresponding stochastic Lagrangian paths, namely that the vector fields $\{A_i\}_{1\le i \le k}$
in the first equation of \eqref{e4-3} must be divergence free, i.e. $\div A_i=0$ for every $1\le i \le k$,
see also a weaker condition \cite[Condition (d), Page 188]{FL}. But for general Riemannian manifolds, it seems
difficult to find divergence free vector fields $\{A_i\}_{1\le i \le k}$ which also satisfy \eqref{e4-1} and
\eqref{e4-2}. Such kind of vector fields can only be constructed for some special Riemannian manifolds; we refer the reader
to \cite[Section 4,5]{FL} for more details. Still, due to these assumptions on $\{A_i\}_{1\le i \le k}$, the stochastic
representation obtained in \cite{FL} is only valid for NSEs on Riemannian manifolds associated with
de Rham-Hodge Laplacian operator. We will remove these restrictions on $\{A_i\}_{1\le i \le k}$ in this paper. Particularly
in Theorem \ref{t4-1} and \ref{t4-2}, the divergence free condition $\div A_i=0$, $1\le i\le k$ is not required. Moreover,
as explained in Remark \ref{r4-1}, by changing the formulation accordingly, our results will be valid for NSEs on Riemannian manifolds
associated with different second order differential operators, not only de Rham-Hodge Laplacian operator.

\item [(iii)] In order to show  existence of solutions for NSEs via  a probabilistic representation
induced by  stochastic Lagrangian paths, suitable gradient estimates for flows generated by
associated SDEs are needed. In the Euclidean space, see e.g. \cite{CCQ,CQ,CI,FQT,Z},
diffusion coefficients for gradients of these flows vanish, since the  corresponding stochastic Lagrangian paths
have additive noises. This property is essential for the proof, so the arguments do not work for stochastic Lagrangian paths
 having multiplicative noises,
including the case of NSEs on Riemannian manifolds. In this paper we will use a technique concerning the
decomposition of noises introduced by Elworthy, LeJan and Li \cite{ELL,EL} to study gradient
flows for  stochastic Lagrangian paths on Riemannian manifolds (see Lemma \ref{l4-3} and Lemma \ref{l4-2}). This approach also
seems to provide a reasonable way to investigate a more general class of differential systems induced by stochastic
Lagrangian paths with multiplicative noises.

\item [(iv)] In this paper we only consider  decoupled FBSDEs (meaning
that forward equations do not depend on  backward equations) on tensor fields. For
coupled FBSDEs on tensor fields some extra efforts are required to prove  existence
of solutions. We will investigate  coupled FBSDEs on tensor fields
in a future project, which will also shed some light on the study of Ricci flows by stochastic methods.
\end{itemize}

\section{The theory of FBSDEs on tensor fields}
\subsection{Preliminary and Notations}
Suppose $(M,g)$ is a $d$-dimensional compact Riemannian manifold without boundary endowed with a Riemannian metric $g$.
Let $\nabla$ and $\mu(dx)$ denote the Levi-Civita connection and Riemannian volume measure associated with $g$, respectively.
For every $x\in M$, let $T_x M$ be the tangent space at $M$ and  denote by $T_x^*M$ the dual space
of $T_x M$. Then, for each $\xi \in T_x^*M$, there exists a unique element $\xi^{\sharp}\in T_x M$ such that
\begin{equation}\label{e2-1}
\langle \xi^{\sharp}, \eta\rangle(x)=(\xi,\eta),\ \ \forall\ \eta\in T_x M,
\end{equation}
where $\langle , \rangle(x)$ denotes the inner product on $T_xM$ induced by $g$ and
$(,)$ represents the standard pair between $T_x M$ and its dual $T_x^* M$. Based on
the expression above, we can define an inner product $\langle , \rangle_{(T^*_x M,g)}$ on $T_x ^* M$ such that
\begin{equation}\label{e2-2}
\langle \xi,\xi\rangle_{(T^*_x M,g)}:=\langle \xi^{\sharp}, \xi^{\sharp}\rangle 
,\ \ \forall\ \xi\in T_x^* M.
\end{equation}
For every non-negative integers $m,n$, we define the bundle of $(m,n)$ tensors
$T^{m,n}M$ as follows
\begin{equation*}
T^{m,n}M:=\bigcup_{x\in M}(T_x M)^{\otimes m}\otimes (T_x^* M)^{\otimes n},
\end{equation*}
where $\otimes m$ denotes the tensor product for $m$ times. For every $k\in \mathbb{N_+}$,
 $\alpha\in [0,1)$, let $C_b^{k+\alpha}\left(\Gamma(T^{m,n}M)\right)$ denote the collection of
all bounded $C^k$ sections of $T^{m,n}M$ whose $k$-th order covariant derivatives are uniformly $\alpha$-H\"older continuous.
(Here when $\alpha=0$, $C_b^{k}\left(\Gamma(T^{m,n}M)\right)$ just represents the collection of
all bounded $C^k$ sections of $T^{m,n}M$). Moreover, we define
\begin{equation*}
\begin{split}
&C_b^{k_1,k_2+\alpha}\left([0,T]; \Gamma(T^{m,n}M)\right):=
C_b^{k_1}\Big([0,T];C_b^{k_2+\alpha}\left(\Gamma(T^{m,n}M)\right)\Big).
\end{split}
\end{equation*}
In particular, when $m=1$, $n=0$, $C_b^k(TM):=C_b^k\left(\Gamma(T^{1,0}M)\right)$ is the collection of
bounded $C^k$ vector fields on $M$.

Let $\F(M)$ be orthonormal frame bundle over $(M,g)$ with  canonical projection
$\pi:\F(M)\rightarrow M$. Through this paper we will fix a standard basis of
$\R^d$ as
$e_i=(0,\cdots, \underbrace{1}_{i\ {\rm th}}, \cdots, 0)$, $1\le i \le d$. So for every $u\in \F(M)$ with
$\pi(u)=x\in M$, $\{E_i(u)\}_{i=1}^d:=\{ue_i\}_{i=1}^d$ is an orthogonal basis of $T_x M$, and
$\{E_i^{\sharp}(u)\}_{i=1}^d$ (which is defined by \eqref{e2-1}) is a dual basis of $T_x^* M$.
Hence, for every $\theta\in T_x^{m,n}M$ and $u\in \F(M)$ with $\pi(u)=x$, we have
a unique expression of $\theta$, namely
\begin{equation*}
\theta=\sum_{1\le i_1,\cdots, i_m\le d}\sum_{1\le j_1,\cdots,j_n\le d}\theta_{j_1\dots j_n}^{i_1\dots i_m}(u)E_{i_1}(u) \otimes \dots
\otimes E_{i_m}(u)\otimes E^{\sharp}_{j_1}(u)\otimes \dots \otimes E_{j_n}^\sharp(u),
\end{equation*}
where $\theta_{j_1\dots j_k}^{i_1\dots i_m}(u)\in \R$ (which depends on $u$). Based on the expression of $\theta$ above we
can define the scalarization $\S(\theta)(u)\in T_0^{m,n}\R^d$ for $\theta\in T_x^{m,n}M$ at $u$ as follows
\begin{equation}\label{e2-4}
\S(\theta)(u):=\sum_{1\le i_1,\cdots, i_m\le d}\sum_{1\le j_1,\cdots,j_n\le d}\theta_{j_1\dots j_n}^{i_1\dots i_m}(u)e_{i_1} \otimes \dots
\otimes e_{i_m}\otimes e^{\sharp}_{j_1}\otimes \dots \otimes e_{j_n}^\sharp,
\end{equation}
where $\{e_i^{\sharp}\}_{i=1}^d$ is the dual basis of $\{e_i\}_{i=1}^d$ on $\R^d$.

For all
$\eta \in T_0^{m,n}\R^d\simeq \R^{(m+n)d}$ with the following expression
\begin{equation*}
\eta=\sum_{1\le i_1,\cdots, i_m\le d}\sum_{1\le j_1,\cdots,j_n\le d}\eta_{j_1\dots j_n}^{i_1\dots i_m}e_{i_1} \otimes \dots
\otimes e_{i_m}\otimes e^{\sharp}_{j_1}\otimes \dots \otimes e_{j_n}^\sharp,
\end{equation*}
we define $u \eta\in T_{\pi(u)}^{m,n}M$ by
\begin{equation}\label{e2-6}
u\eta :=\sum_{1\le i_1,\cdots, i_m\le d}\sum_{1\le j_1,\cdots,j_n\le d}\eta_{j_1\dots j_n}^{i_1\dots i_m}E_{i_1}(u) \otimes \dots
\otimes E_{i_m}(u)\otimes E^{\sharp}_{j_1}(u)\otimes \dots \otimes E_{j_n}^\sharp(u).
\end{equation}
It is easy to see that definitions \eqref{e2-4} and \eqref{e2-6} are independent of the choice of
orthogonal basis on $\R^d$ and that there is a one-to-one correspondence between
$T_x^{m,n}M$ and $T_0^{m,n}\R^d$ such that $\S(u\eta)(u)=\eta$
and $u\big(\S(\theta)(u)\big)=\theta$ for all $\eta\in T_0^{m,n}\R^d$, $\theta \in T^{m,n}_x M$ and
$u\in \F(M)$ with $\pi(u)=x$.

\subsection{FBSDEs on tensor fields}

Given $C^{3+\alpha}$ (time-dependent) vector fields $A_i(t,\cdot)$, $0\le i\le k$, $t\in [0,T]$ on $M$, we consider the following $\F(M)$-valued stochastic differential equation
(we write SDE for simplicity)
\begin{equation}\label{e2-5}
\begin{cases}
& dU_s^{t,u}=\sum_{i=1}^k \h_{A_i(s)}(U_s^{t,u})\circ dB_s^i+\h_{A_0(s)}(U_s^{t,u}) ds,\ \ 0\le t\le s \le T,\\
& U_t^{t,u}=u,\ u\in \F(M),
\end{cases}
\end{equation}
where $\h_{A_i(s)}\in T\F(M)$ denotes the horizontal lift of $A_i(s,\cdot)$ and $B_s=\left(B_s^1,\cdots, B_s^k\right)$ is a standard
$\R^k$-valued Brownian motion on  probability space
$(\Omega, \mathscr{G}, \P)$ with associated filtration $\{\mathscr{G}_t\}_{t\ge 0}$. Set $X_s^{t,x}:=\pi\left(U_s^{t,u}\right)$ for all $0\le t\le s\le T$ and $u\in \F(M)$
satisfying $\pi(u)=x$ (it is well known that $X_s^{t,x}$ is independent of the choice of $u\in \F(M)$ with $\pi(u)=x$, see e.g.
\cite{H} for details).
For every $N>0$ we define
\begin{equation*}
\begin{split}
\mathscr{C}([t,s]; \R^N):=&\Big\{\xi:[t,s]\times \Omega\rightarrow \R^N;\ \xi\ \text{is}\ \mathscr{G}_r-\text{adapted\ for\ all}
\ r\in[t,s],\\
&\ \xi(\cdot,\omega)\ {\rm is\ continuous\ in}\ [t,s]\ {\rm for}\ a.s.\ \omega,\ \text{and}\ \E\left[\sup_{t\le r\le s}|\xi_r|^2\right]<\infty \Big\},
\end{split}
\end{equation*}
\begin{equation*}
\begin{split}
\mathscr{M}([t,s]; \R^N):=&\Big\{\xi:[t,s]\times \Omega\rightarrow \R^N;\ \xi\ \text{is}\ \mathscr{G}_r-\text{adapted\ for\ all}
\ r\in[t,s]\\
&\ \text{and}\ \E\left[\int_t^s|\xi_r|^2 dr\right]<\infty \Big\},
\end{split}
\end{equation*}
and we also set
\begin{equation*}
\begin{split}
\mathscr{C}([t,s]; \F(M)):=&\Big\{\xi:[t,s]\times \Omega\rightarrow \F(M);\ \xi\ \text{is}\ \mathscr{G}_r-\text{adapted\ for\ all}
\ r\in[t,s]\\
&\ \xi(\cdot,\omega)\ {\rm is\ continuous\ in}\ [t,s]\ {\rm for}\ a.s.\ \omega,\ \text{and}\ \E\left[\sup_{t\le r\le s}|\xi_r|_g^2 \right]<\infty\Big\},
\end{split}
\end{equation*}
where $|\xi_r|_g$ represents the standard norm in $\F(M)$ induced by Riemannian metric $g$.

For every $\alpha\in (0,1)$, let
\begin{equation*}
\begin{split}
&\quad C_b^{0,2+\alpha}\Big([0,T]\times M\times T^{m,n}M \times (T^{m,n}M)^{\otimes k}; T^{m,n}M\Big)\\
&:=\Big\{F: [0,T]\times M\times T^{m,n}M \times (T^{m,n}M)^{\otimes k} \rightarrow T^{m,n}M;
\text{the\ first\ two\ order\ covariant\ derivatives}\\
&\text{\ of}\ F\ \text{with\ respect\ to}\
x\in M, v\in T_x^{m,n}M, w\in (T_x^{m,n}M)^{\otimes k},\ \text{are\ bounded}
\text{\ and\ the\ second\ order}\\
&\text{\ covariant\ derivatives\ are}\ \alpha-\text{H\"older\ continuous}\Big\}
\end{split}
\end{equation*}

We will consider the following assumption on (the nonlinear generator) $F$
\begin{assumption}\label{a2-1}
Suppose that $F\in C_b^{0,2+\alpha}\Big([0,T]\times M\times T^{m,n}M \times (T^{m,n}M)^{\otimes k}; T^{m,n}M\Big)$
for some $\alpha\in (0,1)$ and
$F(t,x,v,w)\in T_x^{m,n}M$ for all $t\in [0,T]$, $x\in M$, $v\in T_x^{m,n}M$, $w\in (T_x^{m,n}M)^{\otimes k}$.
\end{assumption}
With the SDE \eqref{e2-5} as forward equation and $F$ satisfying Assumption \ref{a2-1}, we will consider the following  $T^{m,n}_0\R^d\simeq
\R^{(m+n)d}$-valued  FBSDE,
\begin{equation}\label{e2-7}
\begin{cases}
& dU_s^{t,u}=\sum_{i=1}^k \h_{A_i(s)}(U_s^{t,u})\circ dB_s^i+\h_{A_0(s)}(U_s^{t,u}) ds,\ \ 0\le t\le s \le T,\\
& d\tilde Y_s^{t,u}=\sum_{i=1}^k \tilde Z_s^{t,u,i}dB_s^i-\tilde F\left(s, U_s^{t,u}, \tilde Y_s^{t,u}, \tilde Z_s^{t,u}\right)ds,\\
& U_t^{t,u}=u,\ \tilde Y_T^{t,u}=\S(h)(U_T^{t,u}).
\end{cases}
\end{equation}
Here $\S(h)$ is the scalarization (defined by \eqref{e2-4}) of $h\in C_b^{2+\alpha}\left(\Gamma(T^{m,n}M)\right)$,
and $\tilde F:[0,T]\times\F(M)\times T_0^{m,n}\R^d\times (T_0^{m,n}\R^d)^{\otimes k} \to T_0^{m,n}\R^d$ is defined as follows
\begin{equation}\label{e2-7a}
\tilde F\left(t, u, v,w\right):=\S\left(F\left(t,\pi (u), u v, u w\right)\right)(u),\ u\in \F(M),\ v\in T_0^{m,n}\R^d,\
 w\in (T_0^{m,n}\R^d)^{\otimes k},
\end{equation}
where $uv$ and $uw$ are defined by \eqref{e2-6} and
$\S\left(F\left(t,\pi (u), u v, u w\right)\right)(u)$ is the scalarization of $F\left(t,\pi (u), u v, u w\right)$
at $u$, given by \eqref{e2-4}. Since $F\in C_b^{0,2+\alpha}\Big([0,T]\times M\times T^{m,n}M \times (T^{m,n}M)^{\otimes k};$ $ T^{m,n}M\Big)$,
according to the definition of $\tilde F$ there exists a constant $C_1>0$ such that
\begin{equation*}
\begin{split}
&\left|\tilde F\left(t,u,v_1,w_1\right)-\tilde F\left(t,u,v_2,w_2\right)\right|\le C_1\left(|v_1-v_2|+|w_1-w_2|\right),\\
&\ \forall\ t\in [0,T],\ u\in \F(M),\ v_1,v_2\in T_0^{m,n}\R^d,\ w_1,w_2\in (T_0^{m,n}\R^d)^{\otimes k}.
\end{split}
\end{equation*}
Note that we can view the second
equation in \eqref{e2-7} as an $\R^{(m+n)d}$-valued backward stochastic differential equation. By the uniform Lipschitz continuity
 for $\tilde F$ as well as \cite[Theorem 3.1]{PP1}, there exists a unique solution (for every fixed $t\in [0,T]$)
$(\tilde Y_\cdot^{t,u},\tilde Z_\cdot^{t,u})\in \mathscr{C}([t,T];\R^{(m+n)d})\times \mathscr{M}([t,T];\R^{(m+n)dk})$ to the second  and third equation
in \eqref{e2-7}.

\begin{remark}
Although \eqref{e2-7} is a FBSDE with an $\F(M)$-valued forward equation and a $T_0^{m,n}\R^d$-valued backward equation, we could view it
as a $T^{m,n}M$-valued FBSDE in some sense. Using the isometry $U_s^{t,u}:\R^d \to T_{X_s^{t,x}}M$, we can define
$Y_s^{t,x}:=U_s^{t,u}\tilde Y_s^{t,u}\in T_{X_s^{t,x}}^{m,n}M$ for all $u\in \F(M)$ with $\pi(u)=x$ (as explained
in the proof of Theorem \ref{t2-2} below, $Y_s^{t,x}$ is independent of the choice of $u$, so it is well defined).
Moreover, as proved in Theorem \ref{t2-2}, $\theta(t,x):=Y_{t}^{t,x}$, $t\in [0,T]$ is the solution of
a $T^{m,n}M$-valued parabolic partial differential equation.
\end{remark}

We will now present a result concerning the equivalence between  FBSDE
\eqref{e2-7} and the classical solution $\theta\in C_b^{1,2}\left([0,T];\Gamma(T^{m,n}M) \right)$ of the following semi-linear parabolic equation on $T^{m,n}M$,
\begin{equation}\label{e2-8}
\begin{cases}
&\frac{\partial \theta(t,x)}{\partial t}=-\mathscr{L}_t\theta(t,x)
-F\left(t,x,\theta(t,x), \left(\nabla_{A_1(t)}\theta(t,x){\color{red},\cdots ,} \nabla_{A_k(t)}\theta(t,x)\right)\right),\\
&\theta(T,x)=h(x)
\end{cases}
\end{equation}
where
\begin{equation}\label{e2-8a}
\mathscr{L}_t\theta(t,x):=\frac{1}{2}\sum_{i=1}^k \nabla^2\theta(t,x)\big(A_i(t,x),A_i(t,x)\big)+\frac{1}{2}\sum_{i=1}^k
\nabla_{\nabla_{A_i(t)}A_i(t)}\theta(t,x)+\nabla_{A_0(t)}\theta(t,x)
\end{equation}
with $\nabla^2$ denoting the second order covariant derivative on $T^{m,n}M$ induced by the Levi-Civita connection.

\begin{theorem}\label{t2-1}
Suppose that $\theta\in C_b^{1,2}\left([0,T];\Gamma(T^{m,n}M)\right)$ is
a classical solution to \eqref{e2-8}. Then for every fixed $t\in [0,T]$, $s\mapsto \Big(U_s^{t,u}, \S\left(\theta(s)\right)(U_s^{t,u}), \big(
\S\left(\nabla_{A_1(s)}\theta(s)\right)(U_s^{t,u}),$ $\cdots , \S\left(\nabla_{A_k(s)}\theta(s)\right)(U_s^{t,u})\big)\Big)$ is the unique solution of
\eqref{e2-7} in $\mathscr{C}([t,T];\mathscr{F}(M))\times\mathscr{C}([t,T];$ $\R^{(m+n)d})\times \mathscr{M}([t,T];\R^{(m+n)dk})$.
\end{theorem}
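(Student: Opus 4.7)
The plan is to apply It\^o's formula to $s\mapsto \S(\theta(s))(U_s^{t,u})$ and then identify the resulting drift with $-\tilde F$ using the PDE \eqref{e2-8}. The fundamental tool is the equivariance of scalarization under the horizontal lift: for any $\eta\in\Gamma(T^{m,n}M)$ and any vector field $A$ on $M$,
\begin{equation*}
\h_A \S(\eta)(u)=\S(\nabla_A\eta)(u),\qquad u\in\F(M),
\end{equation*}
which is the standard identification between horizontal derivatives on $\F(M)$ and covariant derivatives on $T^{m,n}M$ (cf.~\cite{H}). This identity is what makes the scalarization the natural device for turning a tensor-valued problem into one set in the fixed linear space $T_0^{m,n}\R^d$ where the backward part of \eqref{e2-7} lives.

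First, I would compute the Stratonovich differential of $\S(\theta(s))(U_s^{t,u})$ along the forward SDE \eqref{e2-5}. The chain rule, combined with the identity above applied to $\h_{A_i(s)}$ and $\h_{A_0(s)}$, gives
\begin{equation*}
d\S(\theta(s))(U_s^{t,u}) = \S\!\left(\tfrac{\partial\theta}{\partial s}+\nabla_{A_0(s)}\theta(s)\right)(U_s^{t,u})\,ds + \sum_{i=1}^k \S(\nabla_{A_i(s)}\theta(s))(U_s^{t,u})\circ dB_s^i.
\end{equation*}
Converting to It\^o form and using the identity once more on each diffusion coefficient, the Stratonovich correction contributes $\tfrac{1}{2}\sum_i \S(\nabla_{A_i(s)}\nabla_{A_i(s)}\theta(s))(U_s^{t,u})\,ds$ to the drift. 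Invoking the algebraic identity $\nabla^2\theta(A_i,A_i)=\nabla_{A_i}\nabla_{A_i}\theta-\nabla_{\nabla_{A_i}A_i}\theta$ and comparing with \eqref{e2-8a}, the total It\^o drift equals $\S\!\left(\tfrac{\partial\theta}{\partial s}+\mathscr{L}_s\theta(s)\right)(U_s^{t,u})\,ds$. By \eqref{e2-8} this becomes $-\S\!\left(F\bigl(s,X_s^{t,x},\theta(s),(\nabla_{A_i(s)}\theta(s))_i\bigr)\right)(U_s^{t,u})\,ds$, and using $u(\S(\cdot)(u))=\cdot$ together with the definition \eqref{e2-7a}, this matches exactly $-\tilde F(s,U_s^{t,u},\tilde Y_s^{t,u},\tilde Z_s^{t,u})\,ds$, where
\begin{equation*}
\tilde Y_s^{t,u}=\S(\theta(s))(U_s^{t,u}),\qquad \tilde Z_s^{t,u,i}=\S(\nabla_{A_i(s)}\theta(s))(U_s^{t,u}).
\end{equation*}

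The terminal condition $\tilde Y_T^{t,u}=\S(h)(U_T^{t,u})$ follows from $\theta(T,\cdot)=h$, and membership of $(\tilde Y, \tilde Z)$ in $\mathscr{C}([t,T];\R^{(m+n)d})\times\mathscr{M}([t,T];\R^{(m+n)dk})$ is immediate from $\theta\in C_b^{1,2}$ and the compactness of $M$. Uniqueness is standard: the forward SDE \eqref{e2-5} has a unique $\F(M)$-valued solution by the $C^{3+\alpha}$ regularity of the $A_i$, and conditionally on $U^{t,u}$ the backward part of \eqref{e2-7} is a uniformly Lipschitz BSDE, so \cite[Theorem 3.1]{PP1} forces uniqueness of $(\tilde Y,\tilde Z)$. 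The main obstacle is essentially bookkeeping: one must track the Stratonovich-to-It\^o passage carefully so that the two distinct second-order pieces $\nabla_{A_i}\nabla_{A_i}\theta$ and $\nabla_{\nabla_{A_i}A_i}\theta$ combine correctly to reproduce the hybrid operator $\mathscr{L}_t$ of \eqref{e2-8a}; once this is checked, the remainder is identification of terms.
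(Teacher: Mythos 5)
Your proposal is correct and follows essentially the same route as the paper: apply It\^o's formula to $\S(\theta(s))(U_s^{t,u})$ along the horizontal diffusion, use the intertwining identities $\h_{A_i}\S(\theta)=\S(\nabla_{A_i}\theta)$ and $\h_{A_i}\h_{A_i}\S(\theta)=\S\bigl(\nabla^2\theta(A_i,A_i)\bigr)+\S\bigl(\nabla_{\nabla_{A_i}A_i}\theta\bigr)$ to identify the drift with $-\tilde F$ via \eqref{e2-8}, and conclude uniqueness from the uniform Lipschitz property of $\tilde F$ and \cite[Theorem 3.1]{PP1}. Your explicit Stratonovich-to-It\^o bookkeeping is exactly the computation hidden in the paper's application of It\^o's formula, so there is no substantive difference.
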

\begin{proof}
Suppose that $\theta\in C_b^{1,2}\left([0,T];\Gamma(T^{m,n}M)\right)$ is
a classical solution of \eqref{e2-8} and let $\S(\theta)$ be the scalarization of
$\theta$, which is defined by \eqref{e2-4}. According to \cite[Proposition 2.2.1]{H} we know that
$\S(\theta) \in C_b^2\left([0,T]\times\F(M); T_0^{m,n}\R^d\right)$ and the following property holds
\begin{equation}\label{t2-1-1}
\h_{A_i(t)}\S\left(\theta(t)\right)(u)=\S\left(\nabla_{A_i(t)}\theta(t)\right)(u),\ \frac{\partial }{\partial t}\S(\theta(t))(u)=
\S(\frac{\partial \theta}{\partial t})(t,u),\ 0\le i\le k,
\end{equation}
where $\S(\nabla_{A_i(t)}\theta(t))$ and  $\S(\frac{\partial \theta}{\partial t})$ denote the scalarization of $\nabla_{A_i(t)}\theta(t)$
and $\frac{\partial \theta}{\partial t}$ respectively. Based on this property we
conclude that
\begin{equation}\label{t2-1-2}
\begin{split}
\h_{A_i(t)}\left(\h_{A_i(t)}\S(\theta(t))\right)(u)&=\h_{A_i(t)}\left(\S\left(\nabla_{A_i(t)}\theta(t)\right)\right)(u)
=\S\left(\nabla_{A_i(t)}(\nabla_{A_i(t)}\theta(t))\right)(u)\\
&=\S\left(\nabla^2 \theta(t)(A_i(t),A_i(t))\right)(u)+
\S\left(\nabla_{\nabla_{A_i(t)}A_i(t)} \theta(t)\right)(u).
\end{split}
\end{equation}
Since $U_s^{t,u}$ is the solution of SDE \eqref{e2-5}, applying It\^o's formula we obtain
\begin{align*}
&\quad \S(h)(U_T^{t,u})=\S\left(\theta(T)\right)(U_T^{t,u})\\
&=\S\left(\theta(s)\right)(U_s^{t,u})+\sum_{i=1}^k \int_s^T \h_{A_i(r)}\S\left(\theta(r)\right)(U_r^{t,u})dB_r^i\\
&+\sum_{i=1}^k\frac{1}{2} \int_s^T\h_{A_i(r)}\h_{A_i(r)}\S\left(\theta(r)\right)(U_r^{t,u})dr
+\int_s^T \h_{A_0(r)}\S\left(\theta(r)\right)(U_r^{t,u})dr+\int_s^T \frac{\partial \S(\theta(r))}{\partial r}(U_r^{t,u})dr\\
&=\S\left(\theta(s)\right)(U_s^{t,u})+\sum_{i=1}^k \int_s^T \S\left(\nabla_{A_i(r)}\theta(r)\right)(U_r^{t,u})dB_r^i+
\int_s^T\S\left(\mathscr{L}_r\theta+\frac{\partial \theta(r)}{\partial r}\right)(U_r^{t,u})dr\\
&=\S\left(\theta(s)\right)(U_s^{t,u})+\sum_{i=1}^k \int_s^T \S\left(\nabla_{A_i(r)}\theta(r)\right)(U_r^{t,u})dB_r^i\\
&\quad \quad -\int_s^T \tilde F\left(r,U_r^{t,u},\S\left(\theta(r)\right)(U_r^{t,u}),
\left(\S\left(\nabla_{A_1(r)}\theta(r)\right),\cdots,\S\left(\nabla_{A_k(r)}\theta(r)\right)\right)(U_r^{t,u})\right)dr.
\end{align*}
The third equality above follows from \eqref{t2-1-1} and \eqref{t2-1-2}, and in the last equality we have applied
equation \eqref{e2-8} and the definition of $\tilde F$ in \eqref{e2-7a}.

Let
$\tilde Y_s^{t,u}:=\S\left(\theta(s)\right)(U_s^{t,u})$,
$\tilde Z_s^{t,u}:=\left(\S\left(\nabla_{A_1(s)}\theta(s)\right),\cdots, \S\left(\nabla_{A_k(s)}\theta(s)\right)\right)(U_s^{t,u})$.
From the equality above we see
that $\left(U_\cdot^{t,u}, \tilde Y_\cdot^{t,u}, \tilde Z_\cdot^{t,u}\right)$
is a solution of \eqref{e2-6}. On the other hand,
since $\theta\in C_b^2\left([0,T];\Gamma(T^{m,n}M)\right)$, it is easy to verify that $\left(U_\cdot^{t,u}, \tilde Y_\cdot^{t,u}, \tilde Z_\cdot^{t,u}\right)$
$\in \mathscr{C}([t,T];\mathscr{F}(M))\times\mathscr{C}([t,T];\R^{(m+n)d})\times \mathscr{M}([t,T];\R^{(m+n)dk})$. Uniqueness
of $\left(U_\cdot^{t,u}, \tilde Y_\cdot^{t,u}, \tilde Z_\cdot^{t,u}\right)$ is due to the uniform Lipschitz continuity of $\tilde F$ and
\cite[Theorem 3.1]{PP1}.
\end{proof}

\begin{theorem}\label{t2-2}
Suppose $(U_\cdot^{t,u}, \tilde Y_\cdot^{t,u},\tilde Z_\cdot^{t,u})\in
\mathscr{C}([t,T];\mathscr{F}(M))\times\mathscr{C}([t,T];\R^{(m+n)d})\times$ $ \mathscr{M}([t,T];\R^{(m+n)dk})$
is the unique solution of \eqref{e2-7}. Set
 $\theta(t,x):=u \tilde Y_t^{t,u}$ for any $u\in \F(M)$ satisfying
that $\pi(u)=x$ (in particular, the value of $\theta$ only depends on $x$ and is independent of the choice of $u$). Then
$\theta\in C_b^{1,2}\left([0,T];\Gamma(T^{m,n}M)\right)$ is
a classical solution of \eqref{e2-8}.
\end{theorem}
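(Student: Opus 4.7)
The plan is to run the converse direction of the standard FBSDE--PDE correspondence, adapted to our setting where the forward equation lives on $\F(M)$ rather than on $M$. I split the argument into four steps: (a) well-definedness of $\theta$ on $M$ via $O(d)$-equivariance; (b) Markov property and $C^{1,2}$-regularity of the ``value function'' $\tilde\theta(t,u):=\tilde Y_t^{t,u}$ on $\F(M)$; (c) deriving a scalar PDE for $\tilde\theta$ via It\^o's formula; (d) reading the tensorial PDE \eqref{e2-8} off on $M$.

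For step (a), the horizontal vector fields $\h_{A_i(s)}$ on $\F(M)$ are right $O(d)$-invariant, so $U_s^{t,ug}=U_s^{t,u}g$ for every $g\in O(d)$. Combining this with the equivariance $\S(h)(ug)=g^{-1}\S(h)(u)$ of the scalarization (with $g^{-1}$ acting through the natural representation of $O(d)$ on $T_0^{m,n}\R^d$) and the analogous identity $\tilde F(s,ug,g^{-1}v,g^{-1}w)=g^{-1}\tilde F(s,u,v,w)$ which follows directly from \eqref{e2-7a}, uniqueness of the BSDE yields $\tilde Y_s^{t,ug}=g^{-1}\tilde Y_s^{t,u}$. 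Hence $(ug)\tilde Y_t^{t,ug}=u\tilde Y_t^{t,u}$, so $\theta(t,x):=u\tilde Y_t^{t,u}$ is independent of the choice of $u\in\pi^{-1}(x)$.

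For step (b), the Markov property $\tilde Y_s^{t,u}=\tilde\theta(s,U_s^{t,u})$ is standard and follows from the flow property of \eqref{e2-5} combined with uniqueness in \cite[Theorem 3.1]{PP1}. The core work is to establish $\tilde\theta\in C_b^{1,2}([0,T]\times\F(M);T_0^{m,n}\R^d)$. Since the vector fields $A_i$ are $C^{3+\alpha}$ and $F$ is $C_b^{0,2+\alpha}$ in its tensor arguments, working in local charts of $\F(M)$ reduces \eqref{e2-7} to a Markovian decoupled FBSDE in Euclidean space with sufficiently smooth coefficients, and the classical regularity theory (as in Pardoux--Peng \cite{PP2}) applies to produce a $C^{1,2}$ value function; the H\"older hypothesis on the second derivatives of $F$ is used exactly here. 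With $\tilde\theta\in C^{1,2}$ in hand, step (c) proceeds by applying It\^o's formula to $\tilde\theta(s,U_s^{t,u})$ along \eqref{e2-5}, obtaining
\begin{equation*}
\begin{split}
d\tilde\theta(s,U_s^{t,u})&=\Big(\tfrac{\partial\tilde\theta}{\partial s}+\h_{A_0(s)}\tilde\theta+\tfrac{1}{2}\sum_{i=1}^k\h_{A_i(s)}\h_{A_i(s)}\tilde\theta\Big)(s,U_s^{t,u})ds\\
&\quad+\sum_{i=1}^k\h_{A_i(s)}\tilde\theta(s,U_s^{t,u})dB_s^i.
\end{split}
\end{equation*}
Matching this against $d\tilde Y_s^{t,u}=\sum_i\tilde Z_s^{t,u,i}dB_s^i-\tilde F(s,U_s^{t,u},\tilde Y_s^{t,u},\tilde Z_s^{t,u})ds$ via uniqueness of the semimartingale decomposition identifies $\tilde Z_s^{t,u,i}=\h_{A_i(s)}\tilde\theta(s,U_s^{t,u})$ and the scalar PDE
\begin{equation*}
\tfrac{\partial\tilde\theta}{\partial s}+\h_{A_0(s)}\tilde\theta+\tfrac{1}{2}\sum_{i=1}^k\h_{A_i(s)}\h_{A_i(s)}\tilde\theta+\tilde F\bigl(s,u,\tilde\theta,(\h_{A_1(s)}\tilde\theta,\dots,\h_{A_k(s)}\tilde\theta)\bigr)=0
\end{equation*}
on $[0,T]\times\F(M)$, with terminal data $\tilde\theta(T,u)=\S(h)(u)$.

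Finally, step (d) translates the PDE on $\F(M)$ to the tensorial PDE \eqref{e2-8} on $M$ using the equivariance established in (a) together with the identities \eqref{t2-1-1}--\eqref{t2-1-2}: horizontal differentiation of a scalarization equals the scalarization of the corresponding covariant derivative, so the second-order operator above is exactly $\S(\partial_s\theta+\mathscr{L}_s\theta)$ (with $\mathscr{L}_s$ given by \eqref{e2-8a}), while the nonlinearity matches $\S\bigl(F(s,x,\theta,(\nabla_{A_1(s)}\theta,\dots,\nabla_{A_k(s)}\theta))\bigr)$ by the very definition \eqref{e2-7a} of $\tilde F$; de-scalarizing recovers \eqref{e2-8}, and $\tilde\theta(T,u)=\S(h)(u)$ yields $\theta(T,x)=h(x)$. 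The main obstacle is step (b): rigorously establishing $C^{1,2}$-regularity of $\tilde\theta$ on the frame bundle. One may either invoke Pardoux--Peng regularity in local charts and patch the resulting derivatives (legitimate since $\tilde\theta$ is intrinsically defined by the FBSDE), or differentiate \eqref{e2-7} with respect to the initial datum $u$ using smoothness of the flow $U_s^{t,u}$ together with standard $L^2$ BSDE a priori estimates. Once this regularity is secured, the remaining steps are routine applications of equivariance, It\^o's formula, and the scalarization identities already exploited in the proof of Theorem \ref{t2-1}.
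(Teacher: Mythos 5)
Your steps (a), (c), (d) — the $O(d)$-equivariance giving well-definedness of $\theta$, the scalar equation for $\tilde\theta$ on $\F(M)$, and the de-scalarization via \eqref{t2-1-1}--\eqref{t2-1-2} — coincide with what the paper does. The genuine gap is exactly at the step you yourself flag as the main obstacle, step (b), and the mechanism you propose for it does not work as stated. You cannot reduce \eqref{e2-7} to a Euclidean FBSDE ``in local charts of $\F(M)$'': the forward diffusion $U^{t,u}_\cdot$ does not stay in any chart up to the fixed terminal time $T$, and both the terminal datum $\S(h)(U_T^{t,u})$ and hence the value function $\tilde\theta(t,u)=\tilde Y_t^{t,u}$ depend on the global motion of the process, so the Pardoux--Peng theorem (stated for globally defined Lipschitz coefficients on $\R^n$) cannot be invoked chart by chart and the local solutions cannot simply be ``patched''. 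Nor can you fall back on interior elliptic regularity in charts: the generator $\widetilde{\mathscr{L}}_t$ on $\F(M)$ is built from the horizontal lifts $\h_{A_i}$, which span at most the $d$-dimensional horizontal distribution inside the $\bigl(d+d(d-1)/2\bigr)$-dimensional tangent space of $\F(M)$, so the operator is degenerate.

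The paper closes this gap by a global, not local, reduction: since $\F(M)$ is compact, it is isometrically embedded into some $\R^L$ by Nash's theorem, the data $\h_{A_i(s)}$, $\S(h)$, $\tilde F$ are extended to globally defined coefficients $\bar\h_{A_i(s)}$, $\bar h$, $\bar F$ on $\R^L$ satisfying the growth and Lipschitz bounds \eqref{t2-1-3a}--\eqref{t2-1-3}, and Pardoux--Peng is applied to the extended Euclidean FBSDE \eqref{t2-1-4}, yielding $\bar\theta\in C_b^{1,2}$ solving \eqref{t2-1-5} (with an extra remark that the $C^{2+\alpha}$ data, weaker than the $C^3$ hypothesis of \cite{PP2}, still suffices upon tracking their estimates — a point you gloss over). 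Because the extended vector fields restrict to the original ones, the extended flow leaves $\F(M)$ invariant, so $\bar U_s^{t,u}=U_s^{t,u}$ and, by uniqueness of the BSDE, $\bar Y_s^{t,u}=\tilde Y_s^{t,u}$ for $u\in\F(M)$; restricting \eqref{t2-1-5} to $\F(M)$ then gives both the $C^{1,2}$ regularity of $\tilde\theta$ and the equation \eqref{t2-1-6} directly, with no separate It\^o-matching step needed. Your alternative suggestion (differentiating \eqref{e2-7} in the initial datum $u$ together with $L^2$ BSDE estimates) is plausible in principle, but it amounts to reproving the Pardoux--Peng regularity machinery in a manifold setting, requires an embedding or connection to even make sense of differentiation in $u$, and is not carried out; as written, the regularity of $\tilde\theta$ — the heart of the theorem — is not established.
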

\begin{proof}
Since $\F(M)$ is a compact Riemannian manifold, by Nash embedding theorem we can find a smooth isometric embedding
$\Phi:\F(M)\to \R^L$. Analogously to (2.5) and (2.7) in \cite{CY1} (or (2.3) in \cite{CY}), we can extend
$\h_{A_i(s)}\in C^{3+\alpha}_b\left(\Gamma(T\F(M))\right)$, $\S(h)\in C_b^{2+\alpha}(\F(M);T_0^{m,n}\R^d)$, $\tilde F\in C^{0,2+\alpha}\Big([0,T]
\times\F(M)\times T_0^{m,n}\R^d \times (T_0^{m,n}\R^d)^{\otimes k};$ $ T_0^{m,n}\R^d\Big)$
(with $F$ satisfying Assumption \ref{a2-1}) to some $\bar \h_{A_i(s)}\in C_b^{3+\alpha}(\R^L;\R^L)$, $\bar h\in
C_b^{2+\alpha}(\R^L; T_0^{m,n}\R^L)$ and $\bar F\in
C^{0,2+\alpha}\Big([0,T]\times\R^L\times T_0^{m,n}\R^L \times (T_0^{m,n}\R^L)^{\otimes k};$ $T_0^{m,n}\R^L\Big)$ such that
\begin{equation}\label{t2-1-3a}
\begin{split}
&|\bar \nabla_u \bar F(t,\bar u,v,w)|\le c_1(1+|v|+|w|),\
\forall\ t\in [0,T],\ \bar u\in \R^L,\ v\in T_0^{m,n}\R^L,\ w\in (T_0^{m,n}\R^L)^{\otimes k},
\end{split}
\end{equation}
\begin{equation}\label{t2-1-3}
|\nabla_v \bar F(t,\bar u,v,w)|+
|\nabla_w \bar F(t,\bar u,v,w)|\le c_2,\
\forall\ t\in [0,T],\ \bar u\in \R^L,\ v\in T_0^{m,n}\R^L,\ w\in (T_0^{m,n}\R^L)^{\otimes k}.
\end{equation}
Here $\bar \nabla_u$ denotes the standard gradient in $\R^L$ (with respect to variable $\bar u$), $\nabla_v$, $\nabla_w$ represent
the standard gradient in $T_0^{m,n}\R^L$ (with respect to variable $v$) and $(T_0^{m,n}\R^L)^{\otimes k}$
(with respect to variable $w$) respectively.

Now we consider the following FBSDE
\begin{equation}\label{t2-1-4}
\begin{cases}
& d\bar U_s^{t,\bar u}=\sum_{i=1}^k \bar \h_{A_i(s)}(\bar U_s^{t,\bar u})\circ dB_s^i+\bar \h_{A_0(s)}(\bar U_s^{t,\bar u}) ds,\ \ 0\le t\le s \le T,\\
& d\bar Y_s^{t,\bar u}=\sum_{i=1}^k \bar Z_s^{t,u,i}dB_s^i-\bar F\left(s,\bar U_s^{t,\bar u}, \bar Y_s^{t,\bar u}, \bar Z_s^{t,\bar u}\right)ds,\\
& \bar U_t^{t,\bar u}=\bar u,\ Y_T^{t,\bar u}=\bar h(\bar U_T^{t,\bar u}),\ \forall\ \bar u \in \R^L.
\end{cases}
\end{equation}
Set $\bar \theta(t,\bar u):=\bar Y_t^{t,\bar u}$. Noticing that \eqref{t2-1-4} is a FBSDE on the Euclidean space
$\R^L\times \R^{(m+n)L}\times \R^{(m+n)kL}$ with coefficients satisfying \eqref{t2-1-3a} and \eqref{t2-1-3}, we can apply
\cite[Theorem 3.1, 3.2]{PP2} to obtain that $\bar \theta \in C_b^{1,2}([0,T]\times\R^L;T_0^{m,n}\R^L)$ is the unique classical solution
of the following parabolic equation on $T_0^{m,n}\R^L$,
\begin{equation}\label{t2-1-5}
\begin{cases}
&\frac{\partial \bar \theta}{\partial t}(t,\bar u)=-\overline{\mathscr{L}}_t\bar \theta(t,\bar u)-\bar
F\left(t,\bar u, \bar \theta(t,\bar u),\left(\bar \nabla_{\bar \h_{A_1(t)}}\bar \theta(t),\cdots,
\bar \nabla_{\bar \h_{A_k(t)}}\bar \theta(t)\right)(\bar u)\right),\\
& \bar \theta(T,\bar u)=\bar h(\bar u),\ \forall\ t\in [0,T],\ \bar u\in \R^L,
\end{cases}
\end{equation}
where
\begin{align*}
\overline{\mathscr{L}}_t\bar \theta(t,\bar u):=&
\frac{1}{2}\sum_{i=1}^k \bar \h_{A_i(t)}\bar \h_{A_i(t)}\bar \theta(t,\cdot)(\bar u)+
\bar \h_{A_0(t)}\bar \theta(t,\cdot)(\bar u).
\end{align*}
In particular, we remark that, although the regularity conditions here (i.e. $\bar h\in C^{2+\alpha}$, $\bar F\in C^{2+\alpha}$)
are different from those
in \cite[Theorem 3.2]{PP2}
(in \cite{PP2} it is required that
$\bar h \in C^3$ and $\bar F \in C^3$), by carefully tracking the proof of \cite[Theorem 2.9]{PP2}, the crucial estimates
(20) and (21) still hold under the present conditions. 

Since $\bar \h_{A_i(s)}$ is an extension of $\h_{A_i(s)}$ (i.e. $\h_{A_i(s)}(u)=\bar \h_{A_i(s)}(u)$ for
every $u\in \F(M)\subset \R^L$), according to \cite[Proposition 1.2.8, Theorem 1.2.9]{H} we have
$\bar U_s^{t,u}=U_s^{t,u}$ a.s. for every $u\in \F(M)\subset \R^L$ and $0\le t \le s\le T$. Therefore
$\bar h(\bar U_T^{t,u})=\S(h)(U_T^{t,u})$ a.s. (since $\bar h$ is an extension of $\S(h)$, we have $\bar h(u)=\S(h)(u)$ for every
$u\in \F(M)$), which implies that $\tilde Y_s^{t,u}=\bar Y_s^{t,u}$,
 a.s. for every $u\in \F(M)\subset \R^L$ and $0\le t \le s\le T$ since the solution of the BSDE (the second
 equation in \eqref{t2-1-4}) is unique.

Set $\tilde \theta(t,u):=\tilde Y_t^{t,u}$ for all $0\le t \le T$ and $u\in \F(M)$. Then
$\tilde \theta\in C_b^{1,2}([0,T]\times \F(M);T_0^{m,n}\R^d)$ and
$\tilde \theta(t,u)=\bar \theta(t,u)$ for every $u\in \F(M)$, $t\in [0,T]$.
On the other hand, if we restrict the variable $\bar u \in \R^L$ to $u\in \F(M)\subset \R^L$, we have
\begin{align*}
\h_{A_i(t)}\tilde \theta(t,\cdot)(u)=\bar \nabla_{\bar \h_{A_i(t)}}\bar \theta(t,\cdot)(u),\ u\in \F(M),\ 1\le i \le k,
\end{align*}
\begin{align*}
\overline{\mathscr{L}}_t\bar \theta(t,u)=\frac{1}{2}\sum_{i=1}^k \h_{A_i(t)} \h_{A_i(t)}\tilde  \theta(t, \cdot)(u)+
\h_{A_0(t)}\tilde \theta(t,\cdot)(u)=:\widetilde{\mathscr{L}}_t\tilde \theta(t,u) ,\ u\in \F(M).
\end{align*}
Combining all the properties above with \eqref{t2-1-5} implies that $\tilde \theta:[0,T]\times \F(M)\to T_0^{m,n}\R^d$ satisfies the following equation
 \begin{equation}\label{t2-1-6}
\begin{cases}
&\frac{\partial \tilde \theta}{\partial t}(t, u)=-\widetilde{\mathscr{L}}_t\tilde \theta(t, u)-\tilde
F\left(t, u, \tilde \theta(t, u),\left(\h_{A_1(t)}\tilde \theta(t),\cdots,
 \h_{A_k(t)}\tilde \theta(t)\right)(u)\right),\\
& \tilde \theta(T,u)=\S(h)(u),\ \forall\ t\in [0,T],\ u\in \F(M).
\end{cases}
\end{equation}

For any fixed $O\in SO(d)$, note that
$U_s^{t,uO}=U_s^{t,u}O$. Based on  this fact and  the following property
\begin{align*}
&\S(h)(uO)=O^{-1}\S(h)(u),\ \tilde F\left(t,uO,O^{-1}v,O^{-1}w\right)
=O^{-1}\tilde F\left(t,u,v,w\right),\\
&\forall\ u\in \F(M),\ v\in T_0^{m,n}\R^d,\ w\in
(T_0^{m,n}\R^d)^{\otimes k},
\end{align*}
it is not difficult to verify that $\tilde Y_s^{t,uO}=O^{-1}\tilde Y_s^{t,u}$ and $\tilde Z_s^{t,uO}=O^{-1}\tilde Z_s^{t,u}$.
This implies that the value of $u\tilde Y_t^{t,u}$ is the same
for all $u\in \F(M)$ satisfying $\pi(u)=x$, hence $\theta(t,x):=u\tilde Y_t^{t,u}$ (for every
$u\in \F(M)$ with $\pi(u)=x$) is well defined. Moreover, by definition of $\theta$ and the property that $\tilde \theta\in
C_b^{1,2}([0,T]\times \F(M);T_0^{m,n}\R^d)$, we have $\theta\in C_b^{1,2}([0,T]; \Gamma(T^{m,n}M))$.

Note that (by definition) $\S(\theta)(t,u)=\tilde Y_t^{t,u}=\tilde \theta(t,u)$
and combining equation \eqref{t2-1-6} with definition of $\tilde F$ and properties \eqref{t2-1-1}, \eqref{t2-1-2} yields
\begin{equation*}
\begin{cases}
&\S\left(\frac{\partial \theta}{\partial t}\right)(t, u)=-\S(\mathscr{L}_t\theta)(t,u)-\S\left(F
\left(t, x, \theta,\left(\nabla_{A_1(t)}\theta(t),\cdots,
 \nabla_{A_k(t)}\theta(t)\right)\right)\right)(u),\\
& \S(\theta)(T,u)=\S(h)(u),\ \forall\ t\in [0,T],\ u\in \F(M).
\end{cases}
\end{equation*}
According to the equation above,  $\theta$ is a classical solution of \eqref{e2-8} and  we have finished the proof.
\end{proof}

\section{Navier-Stokes equations on Riemannian manifolds}

From now we apply the theory of FBSDEs on tensor fields to derive a stochastic representation for the
incompressible Navier-Stokes equations on Riemannian manifolds. As before we always suppose  that $(M,g)$ is a $d$-dimensional compact Riemannian manifold with no boundary endowed with the associated Levi-Civita connection $\nabla$
and Riemannian volume measure $\mu$.

For every $v\in C^2(\Gamma(TM))$, we define the divergence $\div v$ of $v$ as follows
\begin{equation}\label{e3-1}
\div v(x):=\sum_{i=1}^d\left\langle \nabla_{E_i}v, E_i \right\rangle(x),
\end{equation}
where $\{E_i\}_{i=1}^d$ is an (arbitrarily fixed) orthonormal basis of $(T_x M,g)$ (note that
$\div v(x)$ is independent of the choice of $\{E_i\}_{i=1}^d$).

Through this paper, we use $\Delta_g$ and $\Delta$ to denote the Laplace-Beltrami operator (which acts on $C^2(M)$) and
the Bochner Laplacian operator (which acts on $C^2(\Gamma(TM))$) respectively, i.e. we have
\begin{align*}
&\Delta_g f(x):=\mathbf{Tr}(\nabla^2 f)(x)=\sum_{i=1}^d \nabla^2 f(x)\left(E_i,E_i\right)=\div(\nabla f)(x),\ \forall\ f\in C^2(M),\\
&\Delta v(x):=\mathbf{Tr}(\nabla^2 v)(x)=\sum_{i=1}^d \nabla^2 v(x)\left(E_i,E_i\right),\ \forall\ v\in C^2(\Gamma(TM)),
\end{align*}
where $\nabla^2$ means the second order covariant derivative (induced by $\nabla$) and
$\{E_i\}_{i=1}^d$ is an orthonormal basis of $(T_x M,g)$.


\subsection{Sobolev spaces}

For every $1\le p<\infty$, let
\begin{align*}
&L^p(M):=\left\{f:M \to \R; \|f\|_p:=\left(\int_M |f(x)|^p \mu(dx)\right)^{1/p}<\infty\right\},\\
&L^p\left(\Gamma(TM)\right):=\left\{v\in \Gamma(TM); \|v\|_p:=\left(\int_M |v(x)|_{T_x M}^p \mu(dx)\right)^{1/p}<\infty\right\},
\end{align*}
and
\begin{align*}
&L^\infty(M):=\left\{f: M \to \R; \|f\|_\infty:={\rm esssup}_{x\in M}|f(x)|<\infty\right\},\\
&L^\infty\left(\Gamma(TM)\right):=\left\{v\in \Gamma(TM); \|v\|_\infty:={\rm esssup}_{x\in M}|v(x)|_{T_x M}<\infty\right\}.
\end{align*}

Given a vector field $v\in \Gamma(TM)$, we say that the weak (covariant) derivative $\nabla v\in
\Gamma(T^{1,1}M)$ exists if,
for every $w_1\in C^\infty\left(\Gamma(TM)\right)$, we can find  $\nabla_{w_1}v\in \Gamma(TM)$ such that
\begin{align*}
&\quad \int_{M}\left\langle \nabla_{w_1}v, w_2\right\rangle(x)\mu(dx)\\
&=-\int_M
\left\langle v, w_2\right\rangle(x)\div w_1(x)\mu(dx)-\int_M \left\langle \nabla_{w_1}w_2, v\right\rangle(x)\mu(dx),
\ \forall\ w_1,w_2\in C^\infty\left(\Gamma(TM)\right).
\end{align*}
Suppose that $v\in \Gamma(TM)$ has an expression $v(x)=\sum_{1\le i\le d}v_i\left(\psi(x)\right)\partial_i$
under the local charts $(V,\psi)$; then it is easy to verify that $\nabla v$ exists if and only if
$v_i$ are weakly differentiable in $\psi(V)\subset \R^d$.
Analogously, we can also define the $k$-th order weak covariant derivative $\nabla^k f$, $\nabla^k v$ for every
$f:M\to \R$ and $k\ge 2$. Then for all $1\le p<\infty$ and $k\ge 1$, we define
\begin{align*}
W^{k,p}\left(M\right):=&\Bigg\{f:M \to \R;
{\rm weak\ derivative}\ \nabla^i f\ {\rm exists\ for\ all}\ 1\le i\le k,\\
&\|f\|_{k,p}:=\left(\int_M |f(x)|^p \mu(dx)+\sum_{i=1}^k\int_M |\nabla^i f(x)|_{T_x^{0,i}M}^p\mu(dx)\right)^{1/p}<\infty\Bigg\},
\end{align*}
\begin{align*}
W^{k,p}\left(\Gamma(TM)\right):=&\Bigg\{v\in \Gamma(TM);
{\rm weak\ derivative}\ \nabla^i v\ {\rm exists\ for\ all}\ 1\le i\le k,\\
&\|v\|_{k,p}:=\left(\int_M |v(x)|_{T_x M}^p \mu(dx)+\sum_{i=1}^k\int_M |\nabla^i v(x)|_{T_x^{1,i}M}^p\mu(dx)\right)^{1/p}<\infty\Bigg\}.
\end{align*}
In this paper, we usually write $|v(x)|$, $|\nabla^i v(x)|$ for $|v(x)|_{T_x M}$ and $|\nabla^i v(x)|_{T_x^{1,i}M}$
respectively, for simplicity.

Meanwhile, when $p=\infty$ we define
\begin{align*}
W^{k,\infty}\left(M\right):=&\Bigg\{f:M \to \R;
{\rm weak\ derivative}\ \nabla^i f\ {\rm exists\ for\ all}\ 1\le i\le k,\\
&\|f\|_{k,\infty}:={\rm esssup}_{x\in M}|f(x)|+\sum_{i=1}^k {\rm esssup}_{x\in M}|\nabla^i f(x)|<\infty\Bigg\},
\end{align*}
\begin{align*}
W^{k,p}\left(\Gamma(TM)\right):=&\Bigg\{v\in \Gamma(TM);
{\rm weak\ derivative}\ \nabla^i v\ {\rm exists\ for\ all}\ 1\le i\le k,\\
&\|v\|_{k,\infty}:={\rm esssup}_{x\in M}|v(x)|+\sum_{i=1}^k {\rm esssup}_{x\in M}|\nabla^i v(x)|<\infty\Bigg\}.
\end{align*}

For every $1\le p\le \infty$, we set
\begin{align*}
L_0^p(M):=\{f\in L^p(M);\int_M f(x)\mu(dx)=0\},
\end{align*}
$$L^p_{{\rm div}}(\Gamma(TM)):=\left\{v\in L^p(\Gamma(TM));\div v=0\right\}.$$


\subsection{Elliptic regularity estimates for $\Delta_g$}

Since $M$ is compact, it is well known that $\Delta_g: W^{2,p}(M)\cap L^p_0(M)\rightarrow L^p_0(M)$
is an isomorphism.  In particular, for every $f\in L_0^p(M)\cap C(M)$, we have a
probabilistic representation as
\begin{equation*}
\Delta_g^{-1}f(x)=\int_0^\infty\E\left[f(X_t^x)\right]dt,
\end{equation*}
where  $\{X_t^x\}_{t\ge 0}$ is the $M$-valued Brownian motion with initial point $x\in M$.

We believe that the following results on elliptic regularity of $\Delta_g$ are standard when $M$ is compact, although we did not
find them in any reference. For convenience of the reader, we provide here a detailed proof.

\begin{lemma}\label{l3-1}
For every $2\le p<\infty$, there exists a positive constant $C_1$ such that following statements hold
\begin{equation}\label{l3-1-1}
\|\Delta_g^{-1}f\|_p\le C_1\|f\|_p,\ \ \forall\ f\in L_0^p(M),
\end{equation}
\begin{equation}\label{l3-1-2}
\|\Delta_g^{-1}f\|_{2,p}\le C_1\|f\|_{p},\ \ \ \forall\ f\in L_0^p(M),
\end{equation}
\begin{equation}\label{l3-1-3}
\|\Delta_g^{-1}f\|_{3,p}\le C_1\|f\|_{1,p},\ \ \ \forall\ f\in L_0^p(M).
\end{equation}
\end{lemma}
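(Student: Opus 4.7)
The overall strategy is to reduce the claimed global estimates on the compact manifold $M$ to local $W^{2,p}$ estimates for uniformly elliptic operators on $\mathbb{R}^d$, via a finite cover by coordinate charts and a subordinate partition of unity. Once $u=\Delta_g^{-1}f$ is pulled back into a local chart, $\Delta_g$ becomes the uniformly elliptic operator $\tfrac{1}{\sqrt{|g|}}\partial_i(\sqrt{|g|}g^{ij}\partial_j)$ with smooth coefficients, to which the Calder\'on--Zygmund $W^{2,p}$ theory (see e.g.\ Gilbarg--Trudinger) applies. The isomorphism statement $\Delta_g:W^{2,p}(M)\cap L^p_0(M)\to L^p_0(M)$ already recorded before the lemma will be crucial to absorb lower-order terms.

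For (i) and (ii): fix a finite atlas $\{(V_\alpha,\psi_\alpha)\}$ and a smooth partition of unity $\{\chi_\alpha\}$ subordinate to it. Set $u:=\Delta_g^{-1}f$, so $\Delta_g u=f$ and $u\in L^p_0(M)$. A direct computation gives
$$\Delta_g(\chi_\alpha u)=\chi_\alpha f+u\,\Delta_g\chi_\alpha+2\langle\nabla\chi_\alpha,\nabla u\rangle.$$
Transferring to $\psi_\alpha(V_\alpha)\subset\mathbb{R}^d$, the standard interior $W^{2,p}$ estimate for each piece, followed by summation in $\alpha$, yields the local-to-global bound
$$\|u\|_{2,p}\le C_2\bigl(\|f\|_p+\|u\|_{1,p}\bigr).$$
The interpolation inequality $\|u\|_{1,p}\le\varepsilon\|u\|_{2,p}+C_\varepsilon\|u\|_p$ absorbs the gradient term on the right. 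Finally, the residual $\|u\|_p$ is controlled by $\|f\|_p$ by the open mapping theorem applied to the isomorphism $\Delta_g:W^{2,p}\cap L^p_0\to L^p_0$; alternatively, one can argue by compactness and contradiction, using Rellich--Kondrachov together with the fact that a nontrivial zero-mean element of $\ker\Delta_g$ would contradict the isomorphism. This proves (i); combining it with the display above proves (ii).

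For (iii), suppose $f\in W^{1,p}(M)\cap L^p_0(M)$, so that by (ii) we already have $u=\Delta_g^{-1}f\in W^{2,p}(M)$. Working in each chart and differentiating the equation $\Delta_g u=f$ in a coordinate direction $\partial_k$ gives
$$\Delta_g(\partial_k u)=\partial_k f-[\partial_k,\Delta_g]\,u,$$
where the commutator $[\partial_k,\Delta_g]$ is a second-order operator whose coefficients are smooth functions of the metric and its derivatives, hence a bounded map $W^{2,p}\to L^p$ by the compactness of $M$. Applying the $W^{2,p}$ estimate to $\chi_\alpha\,\partial_k u$ as before produces
$$\|\partial_k u\|_{2,p}\le C_3\bigl(\|\partial_k f\|_p+\|u\|_{2,p}\bigr),$$
and summing over $\alpha$ and the coordinate indices $k$ gives $\|u\|_{3,p}\le C_3'(\|f\|_{1,p}+\|u\|_{2,p})\le C_1\|f\|_{1,p}$, where the last step uses (ii). A more intrinsic, coordinate-free variant would instead apply the Bochner identity $\nabla\Delta_g u=\Delta(\nabla u)+\mathrm{Ric}(\nabla u,\cdot)$ and then the analogue of (ii) for the Bochner Laplacian on $1$-forms, which is proved by the same localization argument.

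The main obstacles are essentially bookkeeping: one must justify the global absorption of lower-order terms (here achieved cleanly via the pre-stated isomorphism property rather than by a PDE-theoretic eigenvalue argument), and, for (iii), one must ensure the commutator terms produced by differentiating $\Delta_g$ in local coordinates are of order at most two, so that the $W^{2,p}$ regularity of $u$ from (ii) suffices to control them. Both difficulties are resolved by smoothness of $g$ and compactness of $M$, so no essentially new analytical input is needed beyond the classical Calder\'on--Zygmund estimate and the isomorphism property of $\Delta_g$ on mean-zero functions.
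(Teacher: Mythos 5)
Your proof is correct, and for \eqref{l3-1-2} and \eqref{l3-1-3} it follows essentially the same path as the paper: cover $M$ by finitely many charts, write $\Delta_g$ as a uniformly elliptic operator in coordinates, apply the interior Calder\'on--Zygmund estimate of Gilbarg--Trudinger, and for the third-order bound differentiate the equation in a coordinate direction and treat the commutator (the paper's $H_{k,l}$) as a second-order error controlled by the $W^{2,p}$ bound. The genuine divergence is in \eqref{l3-1-1}: the paper proves the $L^p$ bound from scratch, using the probabilistic representation $\Delta_g^{-1}f=\int_0^\infty P_tf\,dt$ together with the Poincar\'e inequality ($L^2$ exponential ergodicity), uniform ergodicity from compactness, and Riesz--Thorin interpolation to get $\|P_tf\|_p\le c\,e^{-ct}\|f\|_p$; you instead invoke the isomorphism $\Delta_g:W^{2,p}(M)\cap L^p_0(M)\to L^p_0(M)$ stated just before the lemma and the bounded inverse theorem (or a Rellich--Kondrachov contradiction argument). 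Your route is legitimate within the paper's logic, but note that once you accept that isomorphism, \eqref{l3-1-1} and \eqref{l3-1-2} follow in one line from the open mapping theorem, so your localization for (ii) becomes redundant; the paper's choice of a probabilistic, semigroup-based proof is precisely what makes the lemma self-contained (the authors explicitly say they could not locate a reference) and it keeps the constant's provenance explicit, whereas your shortcut buys brevity at the cost of deferring the quantitative content of (3.1)--(3.2) to a cited fact. Your compactness alternative also needs $M$ connected so that zero-mean harmonic functions vanish.

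Two minor points. First, both you and the paper differentiate the equation to get \eqref{l3-1-3}, which requires an a priori qualification ($u\in W^{3,p}_{\mathrm{loc}}$, or difference quotients); the paper handles this by reducing to $f\in C^1(M)$ by approximation, and you should insert the same reduction. Second, in your coordinate-free aside the Weitzenb\"ock-type identity is misstated: the correct formula, used later in the paper, is $\Delta\nabla u=\nabla\Delta_g u+\mathrm{Ric}^{\sharp}\nabla u$, i.e.\ the Ricci term enters with the opposite placement to what you wrote; since this is only an alternative remark it does not affect your main argument.
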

\begin{proof}
By standard approximation arguments, without loss of generality we can assume that $f\in C^1(M)$, so we have
$\Delta_g^{-1}f(x)=\int_0^\infty\E\left[f(X_t^x)\right]dt$, where $\{X_t^x\}_{t\ge 0}$ denotes an $M$-valued Brownian
motion with initial value $x\in M$. We define the Markov semigroup $P_t$ associated with $\{X_t^x\}_{t\ge 0}$
by $P_t f(x):=\E\left[f(X_t^x)\right]$.

Since $M$ is compact, it is well known that the following Poincar\'e inequality
with respect to volume measure holds (see e.g. \cite[Chapter 19]{V} or \cite[Chapter 2]{W}),
\begin{equation*}
\int_M f^2(x)\mu(dx) \le c_1\int_M |\nabla f|^2(x)\mu(dx),\ \ \forall\ f\in C^1(M)\cap L^2_0(M),
\end{equation*}
which implies the $L^2$ exponential ergodicity of $\{P_t\}_{t\ge 0}$ as follows
\begin{equation}\label{l3-1-4}
\|P_t f\|_{2}\le e^{-\frac{t}{c_1}}\|f\|_{2},\ \ \forall\ f\in C^1(M)\cap L^2_0(M).
\end{equation}

On the other hand, since $M$ is compact and $\{P_t\}_{t\ge 0}$ is irreducible,
$\{X_t\}_{t\ge 0}$  is uniformly exponential ergodic and satisfies the following property (see e.g. \cite{DMT})
\begin{equation}\label{l3-1-5}
\|P_t f\|_\infty\le c_2e^{-c_3 t}\|f\|_\infty,\ \ \ \forall\ f\in C^1(M)\cap L^\infty_0(M).
\end{equation}
Based on \eqref{l3-1-4}, \eqref{l3-1-5} and applying the interpolation inequality, it follows that,
for every $2\le p<\infty$, there exist positive constants $c_4$, $c_5$ such that
\begin{equation*}
\|P_t f\|_p \le c_4e^{-c_5 t}\|f\|_p,\ \ \ \forall\ f\in C^1(M)\cap L^p_0(M).
\end{equation*}
Hence we have, for every $2\le p<\infty$,
\begin{align*}
\|\Delta_g^{-1}f\|_p&=
\left\|\int_0^\infty P_t fdt\right\|_p\le
\int_0^\infty\|P_t f\|_p dt\le c_4\|f\|_p \int_0^\infty e^{-c_5 t}dt\le c_6\|f\|_p,
\end{align*}
namely inequality \eqref{l3-1-1}.

Let $w:=\Delta_g^{-1}f$. Since $M$ is compact, we can find a finite open cover $\{V_k\}_{k=1}^m$ of $M$ such that
$V_k\subset\subset U_k$ and $(U_k,\psi_k)$ is  a local chart on $M$, i.e. $\psi_k:U_k \to \psi_k(U_k)\subset \R^d$ is  a smooth
diffeomorphism. Set $w_k(z):=w\left(\psi_k^{-1}(z)\right)$, $f_k(z):=f\left(\psi_k^{-1}(z)\right)$, $\forall z\in \psi_k(V_k)\subset \R^d$,
note that $\Delta_g w=f$, so we have the following expression in local charts $(U_k,\psi_k)$,
\begin{equation}\label{l3-1-6}
\sum_{i,j=1}^d g^{ij}(z)\partial_i \partial_j w_k(z)+\sum_{i,j=1}^d
\frac{\partial_i \left(\sqrt{|g|(z)}g^{ij}(z)\right)}{\sqrt{|g|(z)}}\partial_j w_k(z)=f_k(z), \ \forall\ z\in \psi_k(V_k),
\end{equation}
where $\{g^{ij}\}_{i,j=1}^d$ is the inverse matrix of $\{g_{ij}=g(\partial_i,\partial_j)\}_{i,j=1}^d$ under local
charts $(U_k, \psi_k)$ and $|g|$ denotes the determinant of matrix $\{g_{ij}\}_{i,j=1}^d$.

Based on \eqref{l3-1-6} and applying \cite[Theorem 9.11]{GT} we obtain the following $W^{2,p}$
estimates of $w_k$.
For all $2\le p<\infty$, there exists a $c_7>0$ such that  for every $1\le k \le m$,
\begin{align*}
\sum_{i,j=1}^d\int_{\psi_k(V_k)} \left(
\left|\partial_i w_k(z)\right|^p+\left|\partial_i \partial_j w_k(z)\right|^p\right)dz
&\le c_7\left(\int_{\psi_k(U_k)}|w_k(z)|^p+|f_k(z)|^p\right)dz.
\end{align*}
On the other hand, by \eqref{l3-1-1} we have
\begin{align*}
\int_{\psi_k(U_k)}\left|w_k(z)\right|^pdz&
\le c_8\int_{U_k} |w(x)|^p \mu(dx)\le c_8\int_M |w(x)|^p \mu(dx)\le c_9\|f\|_p^p.
\end{align*}
Combining all above estimates we obtain
\begin{align*}
\|w\|_{2,p}^p&\le \sum_{k=1}^m \int_{V_k}
\left(|w(x)|^p+|\nabla w(x)|^p+|\nabla^2 w(x)|^p\right)\mu(dx)\\
&\le c_{10}\sum_{k=1}^m \sum_{i,j=1}^d\int_{\psi_k(V_k)}
\left(|w_k(x)|^p+|\partial_i w_k(z)|^p+|\partial_i\partial_j w_k(z)|^p\right)dz\\
&\le c_{11}\sum_{k=1}^m \left(\int_{\psi_k(U_k)}|f_k(z)|^pdz+\|f\|_p^p\right)
\le c_{12}\|f\|_p^p.
\end{align*}
So we have proved \eqref{l3-1-2}.

Let $w_k^l(z):=\partial_l w_k(z)$, $\forall z\in \psi_k(U_k)$, $1\le l \le d$. Taking the derivative in \eqref{l3-1-6} yields
\begin{align*}
\sum_{i,j=1}^d g^{ij}(z)\partial_i \partial_j \left(\partial_lw_k\right)(z)+
\sum_{i,j=1}^d
\frac{\partial_i \left(\sqrt{|g|(z)}g^{ij}(z)\right)}{\sqrt{|g|(z)}}\partial_j
\left(\partial_l w_k\right)(z)=H_{k,l}(z),
\ \forall\ z\in \psi_k(V_k),
\end{align*}
where
\begin{align*}
H_{k,l}(z):=\partial_l f_k(z)-\sum_{i,j=1}^d \partial_l g^{ij}(z)\partial_i \partial_j w_k(z)-
\sum_{i,j=1}^d
\partial_l\left(\frac{\partial_i \left(\sqrt{|g|}g^{ij}\right)}{\sqrt{|g|}}\right)(z)\partial_j w_k(z).
\end{align*}
Therefore, still by \cite[Theorem 9.11]{GT}, \eqref{l3-1-2} and the expression of
$H_{k,l}$, we have
\begin{align*}
\int_{\psi_k(V_k)}\left|\partial_i\partial_j\partial_l w_k(z)\right|^pdz
&\le c_{13}\int_{\psi_k(U_k)}\left(|\partial_l w_k(z)|^p+|H_{k,l}(z)|^p\right)dz\\
&\le c_{14}\left(\|f\|_{1,p}^p+\|w\|^p_{2,p}\right)\le c_{15}\|f\|^p_{1,p}.
\end{align*}
Given these estimates and using the same method as above (decomposition of the integral into different
local charts), we verify \eqref{l3-1-3}.
\end{proof}

\begin{corollary}\label{c3-1}
For every $p\ge 2$, there exists a constant $C_2>0$ such that the following estimate holds
\begin{equation}\label{c3-1-1}
\|\nabla^2 f\|_{p}\le C_2 \|\Delta_g f\|_p,\ \ \forall\ f\in L_0^p(M)\cap W^{2,p}(M),
\end{equation}
\end{corollary}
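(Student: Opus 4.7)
The plan is to reduce the inequality directly to the already established $W^{2,p}$ estimate in Lemma \ref{l3-1}, specifically \eqref{l3-1-2}, by inverting the Laplace-Beltrami operator.

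First I would check the key compatibility: for any $f\in W^{2,p}(M)$ on the closed compact manifold $M$, we have
\[
\int_M \Delta_g f(x)\,\mu(dx) = \int_M \div(\nabla f)(x)\,\mu(dx) = 0,
\]
by Stokes' theorem (no boundary terms). Hence $\Delta_g f \in L_0^p(M)$, which is exactly the space to which Lemma \ref{l3-1} applies.

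Next, I would invoke the fact stated just before Lemma \ref{l3-1} that $\Delta_g: W^{2,p}(M)\cap L_0^p(M)\to L_0^p(M)$ is an isomorphism. Thus for $f\in L_0^p(M)\cap W^{2,p}(M)$ we may recover $f$ from $\Delta_g f$ via $f=\Delta_g^{-1}(\Delta_g f)$, and so by inequality \eqref{l3-1-2} applied to $\Delta_g f \in L_0^p(M)$,
\[
\|f\|_{2,p} = \|\Delta_g^{-1}(\Delta_g f)\|_{2,p} \le C_1 \|\Delta_g f\|_p.
\]
Since by definition of the Sobolev norm $\|\nabla^2 f\|_p \le \|f\|_{2,p}$, we obtain the desired estimate with $C_2=C_1$.

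There is really no genuine obstacle here; the statement is essentially a corollary of \eqref{l3-1-2} combined with the isomorphism property of $\Delta_g$ on mean-zero functions. The only point to be mindful of is verifying the mean-zero condition on $\Delta_g f$ (so that Lemma \ref{l3-1} is actually applicable), which is immediate from the divergence structure of $\Delta_g$ and compactness of $M$ without boundary. A standard density/approximation argument (replacing $f$ by $C^\infty$ mollifications as in the proof of Lemma \ref{l3-1}) can be used if one wants to be careful about the regularity needed to justify the integration by parts.
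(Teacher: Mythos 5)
Your proof is correct and is essentially identical to the paper's argument: write $f=\Delta_g^{-1}(\Delta_g f)$ and apply the $W^{2,p}$ estimate \eqref{l3-1-2}. Your additional check that $\Delta_g f$ has zero mean (so that \eqref{l3-1-2} is indeed applicable) is a point the paper leaves implicit, but it does not change the route.
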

\begin{proof}
Note that for every $f\in L_0^p(M)\cap W^{2,p}(M)$, we have $f=\Delta_g^{-1}\left(\Delta_g f\right)$.
Applying \eqref{l3-1-2} yields
\begin{align*}
\|\nabla^2 f\|_p \le \|f\|_{2,p}=
\|\Delta_g^{-1}\left(\Delta_g f\right)\|_{2,p}\le c_1\|\Delta_g f\|_{p}.
\end{align*}
\end{proof}

Finally, we state also the following result

\begin{lemma}\label{l3-3}
We can define a projection from $\p: W^{2,p}(\Gamma(TM)) \to L^p_{{\rm div}}(\Gamma(TM))\cap
W^{2,p}(\Gamma(TM))$ by
$\p(v):=v-\nabla \Delta_g^{-1}\div v$. Moreover, there exists a constant $C_3>0$  such that following estimates hold
\begin{equation}\label{l3-3-2}
\|\p(v)\|_{1,p}\le C_3\|v\|_{1,p},\ \ \forall\ v\in W^{2,p}(\Gamma(TM)),
\end{equation}
\begin{equation}\label{l3-3-1}
\|\p(v)\|_{2,p}\le C_3\|v\|_{2,p},\ \ \forall\ v\in W^{2,p}(\Gamma(TM)).
\end{equation}
\end{lemma}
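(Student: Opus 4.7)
The plan is to verify, in order, three things: (i) $\p(v)$ is well defined and lies in $W^{2,p}(\Gamma(TM))$, (ii) $\div \p(v)=0$, and (iii) the two Sobolev estimates \eqref{l3-3-2} and \eqref{l3-3-1}. For (i), note that for $v\in W^{2,p}(\Gamma(TM))$ the weak divergence $\div v$ belongs to $W^{1,p}(M)$, and approximating $v$ by smooth sections (via partition of unity and local mollification on the compact manifold $M$) combined with the classical divergence theorem on a manifold without boundary yields $\int_M \div v\,d\mu=0$; hence $\div v\in L^p_0(M)\cap W^{1,p}(M)$. Applying Lemma \ref{l3-1} with $f=\div v$ then gives $\Delta_g^{-1}\div v\in W^{3,p}(M)$, so that $\nabla\Delta_g^{-1}\div v\in W^{2,p}(\Gamma(TM))$ and consequently $\p(v)\in W^{2,p}(\Gamma(TM))$.

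For (ii), I would use the identity $\div(\nabla u)=\Delta_g u$ for scalar $u$ to compute $\div\p(v)=\div v-\div\nabla\Delta_g^{-1}\div v=\div v-\Delta_g\Delta_g^{-1}\div v=0$, which places $\p(v)$ in $L^p_{\rm div}(\Gamma(TM))\cap W^{2,p}(\Gamma(TM))$ as required.

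For (iii), the plan is to apply the triangle inequality to $\p(v)=v-\nabla\Delta_g^{-1}\div v$ and use the elementary bound $\|\nabla w\|_{k,p}\le \|w\|_{k+1,p}$ for scalar $w$ (immediate from the definition of Sobolev norms, since taking $\nabla$ shifts the index). For \eqref{l3-3-2}, this gives $\|\p(v)\|_{1,p}\le \|v\|_{1,p}+\|\Delta_g^{-1}\div v\|_{2,p}\le \|v\|_{1,p}+C_1\|\div v\|_p$ by \eqref{l3-1-2}, and the pointwise inequality $|\div v(x)|\le \sqrt{d}\,|\nabla v(x)|$ controls $\|\div v\|_p$ by $\|v\|_{1,p}$. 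For \eqref{l3-3-1}, the same strategy with \eqref{l3-1-3} yields $\|\p(v)\|_{2,p}\le \|v\|_{2,p}+C_1\|\div v\|_{1,p}\le C\|v\|_{2,p}$, using that the weak derivative of $\div v$ is a contraction of $\nabla^2 v$ plus lower-order curvature terms, so that $\|\div v\|_{1,p}\le C\|v\|_{2,p}$.

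The main obstacle is essentially absent: the nontrivial elliptic regularity has already been absorbed into Lemma \ref{l3-1}, and everything else reduces to triangle inequalities and pointwise tensorial bounds. The only point needing mild care is the justification of $\int_M \div v\,d\mu=0$ for weakly differentiable $v$, which is handled by the density argument above.
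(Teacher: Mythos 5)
Your proposal is correct and follows the same route as the paper, whose proof consists precisely of deducing \eqref{l3-3-2} and \eqref{l3-3-1} directly from the elliptic estimates \eqref{l3-1-2} and \eqref{l3-1-3}; you simply spell out the details the paper leaves implicit (the mean-zero property of $\div v$, the identity $\div\nabla=\Delta_g$ giving $\div\p(v)=0$, and the pointwise bounds $|\div v|\le C|\nabla v|$, $|\nabla\div v|\le C|\nabla^2 v|$). The only cosmetic remark is that no curvature terms actually arise in bounding $\|\div v\|_{1,p}$ by $\|v\|_{2,p}$, since covariant differentiation commutes with the metric trace; this does not affect the argument.
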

\begin{proof}
\eqref{l3-3-2}, \eqref{l3-3-1} are just direct consequences of \eqref{l3-1-2} and \eqref{l3-1-3}, respectively.
\end{proof}

\subsection{Navier-Stokes equations on $M$}

We consider following  Navier-Stokes equations on $M$ in time interval $[0,T]$,
\begin{equation}\label{e3-2}
\begin{cases}
&\frac{\partial v}{\partial t}(t,x)-\nabla_{v(t)} v(t)(x)=-\nu \Delta v(t,x)-\nabla p(t,x),\ \div v(t,x)=0,
(t,x)\in [0,T]\times M,\\
&v(T,x)=v_0(x),
\end{cases}
\end{equation}
where the couple of unknowns $(v,p)$ are such that the velocity $v(t,\cdot)\in \Gamma(TM)$
and the pressure $p(t,\cdot)$ is a scalar-valued function on $M$.

Let $\ric:TM \times TM \to \R$ denote the Ricci curvature tensor on $M$ and, for every $v\in \Gamma(TM)$, define
$\ric^{\sharp}v\in \Gamma(TM)$, which is the (unique) element in $\Gamma(TM)$ such that
\begin{align*}
\left\langle \ric^{\sharp}v,w \right\rangle(x)=\ric_x\left(v,w\right),\ \forall w\in \Gamma(TM).
\end{align*}
As  explained in \cite[Page 587 (17)]{P}, for every $v\in C^2(\Gamma(TM))$ with $\div v=0$, we have
$\div (\Delta v)=\div(\ric^{\sharp}v)$. Then, taking  the divergence of both sides of \eqref{e3-2}, we deduce that the pressure $p(t,\cdot)$
(at least formally) satisfies the following elliptic equation,
\begin{equation}\label{e3-3}
\Delta_g p(t,x)=\div(\nabla_{v(t)} v(t))(x)-\nu\div\left(\ric^\sharp v\right) (t,x),\ \ \forall\ (t,x)\in [0,T]\times M.
\end{equation}
In order to study the regularity estimates for $p(t,x)$ from equation \eqref{e3-3}, we will introduce the following lemma.

\begin{lemma}\label{l3-2}
There exists a constant $C_4>0$ such that for every $v_1,v_2\in C^2(\Gamma(TM))$
\begin{equation}\label{l3-2-0}
\left\|\div\left(\nabla_{v_1} v_2\right)-v_1\left(\div v_2\right)\right\|_{p}\le
C_4\|v_2\|_{1,\infty}  \|v_1\|_{1,p},
\end{equation}
\begin{equation}\label{l3-2-0a}
\left\|\div\left(\nabla_{v_1} v_2\right)-v_1\left(\div v_2\right)\right\|_{p}\le
C_4\|v_1\|_{1,\infty} \|v_2\|_{1,p},
\end{equation}
\begin{equation}\label{l3-2-1}
\left\|\div\left(\nabla_{v_1} v_2\right)-v_1\left(\div v_2\right)\right\|_{1,p}\le
C_4\left(\sum_{l=1}^2\|v_l\|_{1,\infty}\right)
\left(\sum_{l=1}^2\|v_l\|_{2,p}\right).
\end{equation}
Here $v_1\left(\div v_2\right)$ denotes the vector field $v_1$ acting on the function $\div v_2\in C^1(M)$. In particular,
if $\div v_2=0$, we have
\begin{equation*}
\left\|\div\left(\nabla_{v_1} v_2\right)\right\|_{1,p}\le
C_4\left(\sum_{l=1}^2\|v_l\|_{1,\infty}\right)
\left(\sum_{l=1}^2\|v_l\|_{2,p}\right).
\end{equation*}
\end{lemma}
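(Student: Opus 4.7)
The guiding observation is that, while both $\div(\nabla_{v_1}v_2)$ and $v_1(\div v_2)$ individually contain second covariant derivatives of $v_2$, their difference involves only first derivatives of $v_1$ and $v_2$, up to a curvature (Ricci) term. I would establish the pointwise identity
\begin{equation*}
\div(\nabla_{v_1}v_2)(x)-v_1(\div v_2)(x)=\ric(v_1,v_2)(x)+\sum_{i=1}^{d}\left\langle \nabla_{\nabla_{E_i}v_1}v_2,\,E_i\right\rangle(x),
\end{equation*}
where $\{E_i\}_{i=1}^d$ is any local orthonormal frame of $TM$ near $x$. Granted this identity, the three inequalities reduce to straightforward H\"older estimates, and the final assertion is immediate since $v_1(\div v_2)\equiv 0$ when $\div v_2=0$.

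To prove the identity, I would view $\omega:=\nabla v_2\in\Gamma(T^{1,1}M)$ as a $(1,1)$-tensor, so that $\nabla_{v_1}v_2=\omega(v_1)$ and $\div v_2=\mathrm{Tr}(\omega)$. Using the Leibniz rule for $\nabla$ applied to $\omega(v_1)$ and to the trace,
\begin{align*}
\div\bigl(\omega(v_1)\bigr)&=\sum_{i=1}^d\left\langle(\nabla_{E_i}\omega)(v_1),E_i\right\rangle+\sum_{i=1}^d\left\langle\omega(\nabla_{E_i}v_1),E_i\right\rangle,\\
v_1(\mathrm{Tr}\,\omega)&=\mathrm{Tr}(\nabla_{v_1}\omega)=\sum_{i=1}^d\left\langle(\nabla_{v_1}\omega)(E_i),E_i\right\rangle.
\end{align*}
Since $(\nabla_X\omega)(Y)=\nabla^2 v_2(X,Y)$, subtracting these two expressions yields
\begin{equation*}
\sum_{i=1}^d\left\langle \nabla^2 v_2(E_i,v_1)-\nabla^2 v_2(v_1,E_i),E_i\right\rangle+\sum_{i=1}^d\left\langle\nabla_{\nabla_{E_i}v_1}v_2,E_i\right\rangle.
\end{equation*}
The Ricci commutation formula $\nabla^2 v_2(X,Y)-\nabla^2 v_2(Y,X)=R(X,Y)v_2$ together with the definition $\ric(v_1,v_2)=\sum_i\langle R(E_i,v_1)v_2,E_i\rangle$ identifies the first sum as $\ric(v_1,v_2)$, giving the claimed identity.

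With the identity in hand, I would estimate the two terms separately. Since $M$ is a fixed compact Riemannian manifold, $\|\ric\|_\infty<\infty$, so $\|\ric(v_1,v_2)\|_p\le C\|v_1\|_\infty\|v_2\|_p$ and symmetrically $\le C\|v_1\|_p\|v_2\|_\infty$. The bilinear term is bounded pointwise by $|\nabla v_1|\,|\nabla v_2|$, so H\"older gives both $\|\nabla v_2\|_\infty\|\nabla v_1\|_p$ and $\|\nabla v_1\|_\infty\|\nabla v_2\|_p$ as upper bounds. Choosing the relevant factor to be $L^\infty$ and the other to be $L^p$ yields \eqref{l3-2-0} and \eqref{l3-2-0a}. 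For \eqref{l3-2-1}, I would differentiate the identity once covariantly; this produces terms of the form $(\nabla\ric)v_1v_2$, $\ric\,(\nabla v_1)v_2$, $\ric\,v_1(\nabla v_2)$, $\nabla^2 v_2\cdot\nabla v_1$ and $\nabla v_2\cdot\nabla^2 v_1$. Applying H\"older with one factor in $L^\infty$ and the other in $L^p$ and using $\|\nabla\ric\|_\infty,\|\ric\|_\infty<\infty$ bounds the $W^{1,p}$-norm by $C(\|v_1\|_{1,\infty}+\|v_2\|_{1,\infty})(\|v_1\|_{2,p}+\|v_2\|_{2,p})$, as desired.

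The main (though not severe) obstacle is keeping track of the cancellation of the second-derivative-of-$v_2$ terms in the derivation of the key identity: this cancellation is what allows \eqref{l3-2-0} to hold with only $\|v_2\|_{1,\infty}$ (not $\|v_2\|_{2,\infty}$) on the right-hand side, and it rests entirely on the Ricci commutation formula. Once that algebra is done correctly, the remaining steps are routine H\"older inequalities on the compact manifold.
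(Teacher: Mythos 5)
Your argument is correct, but it takes a genuinely different route from the paper. You derive the intrinsic pointwise identity $\div(\nabla_{v_1}v_2)-v_1(\div v_2)=\ric(v_1,v_2)+\mathrm{Tr}\left(\nabla v_2\circ\nabla v_1\right)$ by viewing $\nabla v_2$ as a $(1,1)$-tensor, using that contraction commutes with covariant differentiation, and invoking the commutation formula $\nabla^2 v_2(X,Y)-\nabla^2 v_2(Y,X)=R(X,Y)v_2$; the three estimates then follow from H\"older together with $\|\ric\|_\infty,\|\nabla\ric\|_\infty<\infty$ on the compact manifold (any discrepancy of sign convention in the curvature term is immaterial for the bounds). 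The paper instead never commutes derivatives and never makes the curvature explicit: it covers $M$ by finitely many charts with moving orthonormal frames, expands $\div(\nabla_{v_1}v_2)$ into the four terms $w_{1,k},\dots,w_{4,k}$, observes that $w_{1,k}-v_1(\div v_2)=\sum_i v_1\langle v_2,\nabla_{E_i}E_i\rangle$ so that all second derivatives of $v_2$ cancel by pure rewriting, and then estimates each piece coordinatewise, absorbing the frame quantities into constants. Your version buys a structural explanation of the cancellation (it is exactly a Ricci term plus a first-order bilinear trace) and shorter, chart-free estimates, including an immediate proof of the $W^{1,p}$ bound \eqref{l3-2-1} by differentiating the identity; the paper's version is more elementary, requiring only the Leibniz rule and local-chart bookkeeping, at the cost of heavier notation. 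Both yield \eqref{l3-2-0}, \eqref{l3-2-0a}, \eqref{l3-2-1} and the divergence-free special case in the same way, so your proposal is a valid alternative proof.
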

\begin{proof}
As in the proof of Lemma \ref{l3-1}, there exists a finite open cover $\{V_k\}_{k=1}^m$ of $M$ such that
$V_k\subset \subset U_k$ and $(U_k,\psi_k)$ is a local chart on $M$. Moreover, we can find a moving frame
$\{E_i\}_{i=1}^d$ (we omit the index $k$ in $E_i$ for simplicity) on $V_k$ such that $\{E_i(x)\}_{i=1}^d$ is an orthonormal basis of $T_x M$
for every $x\in V_k$ and $E_i\in C^\infty(\Gamma(TV_k))$, $\forall\ 1\le i \le d$.

Let $w(x):=\div\left(\nabla_{v_1} v_2\right)(x)$. For every $x\in V_k$ we have
\begin{align*}
w(x)&=\sum_{i=1}^d \langle \nabla_{E_i}(\nabla_{v_1}v_2), E_i\rangle(x)=
\sum_{i=1}^d\left(E_i\left(\langle \nabla_{v_1} v_2, E_i\rangle\right)(x)-\langle \nabla_{v_1} v_2, \nabla_{E_i}E_i\rangle(x)\right)\\
&=\sum_{i=1}^d\Big(E_i\big(v_1\langle v_2,E_i\rangle-\langle v_2, \nabla_{v_1} E_i\rangle\big)(x)-\langle \nabla_{v_1} v_2, \nabla_{E_i}E_i\rangle(x)\Big)\\
&=\sum_{i=1}^d\Big(v_1E_i\langle v_2, E_i\rangle(x)+[E_i,v_1]\langle v_2, E_i\rangle(x)-E_i\langle v_2, \nabla_{v_1} E_i\rangle(x)
-\langle \nabla_{v_1} v_2, \nabla_{E_i}E_i\rangle(x)\Big)\\
&=:w_{1,k}(x)+w_{2,k}(x)+w_{3,k}(x)+ w_{4,k}(x).
\end{align*}
Here $[E_i,v_1]=E_iv_1-v_1E_i$ denotes the Lie bracket of two vector fields $E_i$ and $v_1$.

Suppose that $v_l$, $l=1,2$, has an expression in $(U_k,\psi_k)$ as
$$v_l(x)=\sum_{i=1}^d v_{l,i}\left(\psi_k(x)\right)\partial_i
=:
\sum_{i=1}^d v_{l,i}^k(z)\partial_i
,\ x\in U_k,\ z=\psi_k(x).$$
We deduce that
\begin{equation*}\label{l3-2-2}
\begin{split}
w_{1,k}(x)&=v_1\left(\sum_{i=1}^d\langle\nabla_{E_i}v_2, E_i\rangle+\sum_{i=1}^d \langle v_2, \nabla_{E_i}E_i\rangle\right)(x)\\
&=v_1\left(\div v_2+\sum_{i=1}^d \langle v_2, \nabla_{E_i}E_i\rangle\right)(x),
\end{split}
\end{equation*}
which immediately implies that
\begin{equation}\label{l3-2-2}
\begin{split}
w_{1,k}(x)-v_1\left(\div v_2\right)(x)=\sum_{i=1}^d v_1\langle v_2, \nabla_{E_i}E_i\rangle(x).
\end{split}
\end{equation}
Set $w_1^k(z):=w_{1,k}\left(\psi_k^{-1}(z)\right)-v_1\left(\div v_2\right)\left(\psi_k^{-1}(z)\right)$,
and $w_i^k(z):=w_{i,k}\left(\psi_k^{-1}(z)\right)$, $\forall\ z\in \psi_k(V_k)$ for $i=2,3,4$.
According to expression \eqref{l3-2-2} we see that,
for every $z\in \psi_k(U_k)$,
\begin{equation}\label{l3-2-3a}
|w_1^k(z)|\le c_1\left(\sum_{i=1}^d\left|v_{1,i}^k(z)\right|\right)\cdot\left(
\sum_{i,j=1}^d \Big(\left|v^k_{2,i}(z) \right|+\left|\partial_jv_{2,i}^k(z)\right|\Big)\right),
\end{equation}
\begin{align*}
\sum_{i=1}^d \left|\partial_i w_1^k(z)\right|
&\le c_1\left(
\sum_{i,j=1}^d \left(\left|v_{1,i}^k(z)\right|+\left|\partial_jv_{1,i}^k(z)\right|\right)\right)\\
&\quad \cdot\left(\sum_{i,j,m=1}^d
\left(\left|v^k_{2,i}(z) \right|+\left|\partial_j v_{2,i}^k(z)\right|+\left|\partial_j\partial_m v_{2,i}^k(z)\right|\right)\right),
\end{align*}
which implies that
\begin{align*}
 &\quad \quad \int_{V_k}\left(\left|w_{1,k}(x)-v_1(\div v_2)(x) \right|^p+\left|\nabla\left(w_{1,k}-v_1(\div v_2)\right)(x)\right|^p\right)\mu(dx)\\
&\le
c_2\int_{\psi_k(V_k)}
\left(|w_1^k(z)|^p+\sum_{i=1}^d \left|\partial_i w_1^k(z)\right|^p\right)dz\\
&\le c_3\left(\sup_{1\le i,j \le d}\sup_{z\in \psi(U_k)}\left(|v_{1,i}^k(z)|^p+|\partial_j v_{1,i}^k(z)|^p\right)\right)\\
&\quad \cdot \left(\sum_{i,j,m=1}^d\int_{\psi_k(V_k)}
\left(|v_{2,i}^k(z)|^p+\left|\partial_j v_{2,i}^k(z)\right|^p+\left|\partial_j\partial_m v_{2,i}^k(z)\right|^p\right)dz\right)\\
&\le c_4\left(\|v_1\|_\infty^p+\|\nabla v_1\|_\infty^p\right)\cdot \left(\int_{V_k}
\left(\left| v_2(x) \right|^p+\left|\nabla v_2(x)\right|^p+\left|\nabla^2 v_2(x)\right|^p\right)\mu(dx)\right)\\
&\le c_5\left(\|v_1\|_\infty^p+\|\nabla v_1\|_\infty^p\right)\cdot\|v_2\|_{2,p}^p.
\end{align*}
By the definition of $w_{2,k}$, $w_{3,k}$, $w_{4,k}$, we can easily verify that
\begin{equation}\label{l3-2-3}
\begin{split}
&\sum_{l=2}^4 |w_l^k(z)|\le c_6\left(\sum_{i,j=1}^d\left(\left|v_{1,i}^k(z)\right|+\left|\partial_j v_{1,i}^k(z)\right|\right)\right)\cdot\left(
\sum_{i,j=1}^d \left(\left|v_{2,i}^k(z)\right|+\left|\partial_jv_{2,i}^k(z)\right|\right)\right),
\end{split}
\end{equation}
\begin{align*}
&\quad \sum_{i=1}^d \sum_{l=2}^4\left|\partial_i w_l^k(z)\right|\\
&\le c_6
\left(\sum_{i,j,m=1}^d \left(\left|v_{1,i}^k(z)\right|+\left|\partial_jv_{1,i}^k(z)\right|+\left|\partial_j \partial_m v_{1,i}^k(z)\right|\right)\right)
\cdot \left(
\sum_{i,j=1}^d \left(\left|v_{2,i}^k(z)\right|+\left|\partial_jv_{2,i}^k(z)\right|\right)\right)\\
&\quad \quad +c_6\left(
\sum_{i,j=1}^d \left(\left|v_{1,i}^k(z)\right|+\left|\partial_jv_{1,i}^k(z)\right|\right)\right)
\cdot \left(\sum_{i,j,m=1}^d \left(\left|v_{2,i}^k(z)\right|+\left|\partial_jv_{2,i}^k(z)\right|+\left|\partial_j \partial_m v_{2,i}^k(z)\right|\right)\right)
\end{align*}

Based on these estimates and following the
same arguments as above we arrive at
\begin{align*}
\int_{V_k}\left(|w_{i,k}(x)|^p+|\nabla w_{i,k}(x)|^p\right)\mu(dx)
\le c_7\left(\sum_{l=1}^2\left(\|v_l\|_\infty^p+\|\nabla v_l\|_\infty^p\right)\right)\left(\sum_{l=1}^2\|v_l\|_{2,p}^p\right),\ \ i=2,3,4.
\end{align*}
Finally, combining all the above estimates yields
\begin{align*}
&\quad \int_M \left(|w(x)-v_1(\div v_2)(x)|^p+|\nabla w(x)-
\nabla (v_1 (\div v_2))(x)|^p\right)\mu(dx)\\
&\le c_8\sum_{k=1}^{m}\int_{V_k}\left(\left| w_{1,k}(x)-v_1(\div v_2)(x) \right|^p+\left|\nabla w_{1,k}(x)-\nabla(v_1(\div v_2))(x) \right|^p\right)\mu(dx)\\
&+ c_8\sum_{k=1}^m \left(\sum_{i=2}^4
\int_{V_k} \left(|w_{i,k}(x)|^p+|\nabla w_{i,k}(x)|^p\right)\mu(dx)\right)\\
&\le c_{9}\left(\sum_{l=1}^2\left(\|v_l\|_\infty^p+\|\nabla v_l\|_\infty^p\right)\right)\left(\sum_{l=1}^2\|v_l\|_{2,p}^p\right)
\end{align*}
and we have finished the proof for \eqref{l3-2-1}. Analogously and according to estimates \eqref{l3-2-3a} and \eqref{l3-2-3},
we can prove \eqref{l3-2-0} and \eqref{l3-2-0a}.
\end{proof}

For every $v\in W^{2,p}(\Gamma(TM))\cap L^p_{{\rm div}}(\Gamma(TM))$, we define $F_v\in W^{2,p}(\Gamma(TM))$ by
\begin{equation}\label{e3-4}
F_v:=\nabla \Delta_g^{-1}\left(\div(\nabla_v v)-\nu \div \left(\ric^\sharp v\right)\right).
\end{equation}

We have the following

\begin{lemma}\label{l3-4}
For every $p>d$, there exists a positive constant $C_5$ (which is independent of $\nu$) such that the following estimates hold.
\begin{equation}\label{l3-4-1}
\|F_v\|_{2,p}\le C_5(\nu+\|v\|_{2,p})\|v\|_{2,p},\ \ \forall\ v\in W^{2,p}(\Gamma(TM))\cap L^p_{{\rm div}}(\Gamma(TM)).
\end{equation}
\begin{equation}\label{l3-4-2}
\|F_{v_1}-F_{v_2}\|_{1,p}\le C_5
\left(\|v_1\|_{2,p}+\|v_2\|_{2,p}+\nu\right)\|v_1-v_2\|_{1,p},\ \ \ \forall\ v_1,v_2\in W^{2,p}(\Gamma(TM))\cap L^p_{{\rm div}}(\Gamma(TM)).
\end{equation}
\end{lemma}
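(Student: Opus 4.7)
The plan is to reduce both estimates to the elliptic regularity bounds in Lemma \ref{l3-1} (applied via $\nabla\Delta_g^{-1}$) combined with the commutator-type estimates for $\div(\nabla_{v_1}v_2)$ in Lemma \ref{l3-2}, using the Sobolev embedding $W^{2,p}(\Gamma(TM))\hookrightarrow W^{1,\infty}(\Gamma(TM))$ which is available because $p>d$. Before invoking the elliptic bounds, I first check that $\div(\nabla_v v)-\nu\,\div(\ric^\sharp v)$ lies in $L_0^p(M)$: both terms are divergences of smooth-enough vector fields on the compact manifold without boundary, so their integrals vanish by Stokes' theorem, which is exactly what is needed to legitimately apply $\Delta_g^{-1}$.

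To prove \eqref{l3-4-1}, I would write
\begin{equation*}
\|F_v\|_{2,p}=\|\nabla\Delta_g^{-1}(\div(\nabla_v v)-\nu\,\div(\ric^\sharp v))\|_{2,p}\le \|\Delta_g^{-1}(\div(\nabla_v v)-\nu\,\div(\ric^\sharp v))\|_{3,p},
\end{equation*}
and then apply \eqref{l3-1-3} to obtain an upper bound by $C_1\|\div(\nabla_v v)-\nu\,\div(\ric^\sharp v)\|_{1,p}$. Since $\div v=0$, the last estimate in Lemma \ref{l3-2} (with $v_1=v_2=v$) gives $\|\div(\nabla_v v)\|_{1,p}\le C\|v\|_{1,\infty}\|v\|_{2,p}$, and Sobolev embedding converts $\|v\|_{1,\infty}$ into $\|v\|_{2,p}$, yielding the $\|v\|_{2,p}^2$ contribution. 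The Ricci term $\nu\,\div(\ric^\sharp v)$ is harmless since $\ric$ is a smooth bounded $(1,1)$-tensor on the compact manifold, so $\|\div(\ric^\sharp v)\|_{1,p}\le C\|v\|_{2,p}$, providing the $\nu\|v\|_{2,p}$ contribution.

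For \eqref{l3-4-2}, I split the quadratic difference via the identity
\begin{equation*}
\div(\nabla_{v_1}v_1)-\div(\nabla_{v_2}v_2)=\div(\nabla_{v_1-v_2}v_1)+\div(\nabla_{v_2}(v_1-v_2)),
\end{equation*}
so that now, because this time we seek only a $W^{1,p}$ bound on $F_{v_1}-F_{v_2}$, I apply \eqref{l3-1-2} to get $\|F_{v_1}-F_{v_2}\|_{1,p}\le \|\Delta_g^{-1}(\cdots)\|_{2,p}\le C_1\|(\cdots)\|_p$. On the first summand $\div v_1=0$, so \eqref{l3-2-0} of Lemma \ref{l3-2} gives $\|\div(\nabla_{v_1-v_2}v_1)\|_p\le C_4\|v_1\|_{1,\infty}\|v_1-v_2\|_{1,p}$; on the second summand $\div(v_1-v_2)=0$, so \eqref{l3-2-0a} yields $\|\div(\nabla_{v_2}(v_1-v_2))\|_p\le C_4\|v_2\|_{1,\infty}\|v_1-v_2\|_{1,p}$. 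Finally, $\|\nu\,\div(\ric^\sharp(v_1-v_2))\|_p\le C\nu\|v_1-v_2\|_{1,p}$. Combining these and using $\|v_i\|_{1,\infty}\le C\|v_i\|_{2,p}$ delivers \eqref{l3-4-2}.

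The only subtle point I foresee is the careful bookkeeping of which of \eqref{l3-2-0}, \eqref{l3-2-0a}, or \eqref{l3-2-1} to invoke in each term: the key observation is that at every application one of the two vector fields is divergence free (so the $v_1(\div v_2)$ correction in Lemma \ref{l3-2} either vanishes or may be absorbed), which is exactly what lets the right factor carry the weaker $W^{1,p}$ norm of $v_1-v_2$ in \eqref{l3-4-2}. Everything else is a routine application of Lemma \ref{l3-1}, Lemma \ref{l3-2}, Sobolev embedding, and smoothness/compactness of the Ricci tensor.
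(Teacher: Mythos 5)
Your proposal is correct and follows essentially the same route as the paper: \eqref{l3-4-1} is obtained from \eqref{l3-1-3} together with \eqref{l3-2-1} and the Sobolev embedding $\|v\|_{1,\infty}\le c\|v\|_{2,p}$ (the Ricci term being harmless), while \eqref{l3-4-2} uses the identical splitting $\div(\nabla_{v_1}v_1)-\div(\nabla_{v_2}v_2)=\div(\nabla_{v_1-v_2}v_1)+\div(\nabla_{v_2}(v_1-v_2))$, the bound \eqref{l3-1-2}, and \eqref{l3-2-0}, \eqref{l3-2-0a}. Your explicit check that the data lie in $L_0^p(M)$ (divergences integrate to zero on a closed manifold) is a point the paper leaves implicit, but otherwise the arguments coincide.
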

\begin{proof}
Combining \eqref{l3-1-3} with \eqref{l3-2-1} and the fact $\|v_1\|_{1,\infty}\le c_0\|v_1\|_{2,p}$
(which is due to Sobolev embedding theorem as well as the fact $p>d$ and $M$ is compact) we  immediately obtain \eqref{l3-4-1}.

Note that $\div v_1=\div v_2=0$, we have
\begin{align*}
&\quad \|\nabla\Delta_g^{-1}\left(\div \nabla_{v_1}v_1-\div \nabla_{v_2} v_2\right)\|_{1,p}\\
&\le \left\|\nabla \Delta_{g}^{-1}\left(\div \left(\nabla_{v_1-v_2}v_1\right)\right)\right\|_{1,p}+
 \left\|\nabla \Delta_{g}^{-1}\left(\div\left(\nabla_{v_2}(v_1-v_2)\right)\right)\right\|_{1,p}\\
 &\le c_1\left(\|\div \left(\nabla_{v_1-v_2}v_1\right)\|_p+\|\div\left(\nabla_{v_2}(v_1-v_2)\right)\|_p\right)\\
 &\le c_2\left(\|v_1\|_{2,p}+\|v_2\|_{2,p}\right)\|v_1-v_2\|_{1,p},
\end{align*}
where the second inequality follows from \eqref{l3-1-2} and the last step is due to \eqref{l3-2-0}, \eqref{l3-2-0a} and
Sobolev embedding theorem.
Based on such estimate and the definition of $F_v$ we  prove the desired conclusion \eqref{l3-4-2}.
\end{proof}

\section{Representation of Navier-Stokes equations on $M$ through FBSDEs on tensor fields}
In this section we will use FBSDEs on tensor fields introduced in Section 2 to give a
representation of Navier-Stokes equations on $M$.

In a first step we will choose some suitable smooth (time-independent) vector fields $\{A_i\}_{i=1}^k$ for the forward equation \eqref{e2-5}.
According to Nash's embedding theorem, there exist an isometric embedding
$\Phi:M \to \R^k$ for some $k\in \mathbb{N}_+$.
\emph{Through this section we define $A_i(x):=\nabla\left(\langle  \Phi, e_i\rangle_{\R^k}\right)(x)$, $1\le i \le k$}, where
$\{e_i\}_{i=1}^k$ is a standard orthonormal basis of $\R^k$, $\langle , \rangle_{\R^k}$ is
the Euclidean metric in $\R^k$.  It is not difficult to see that $A_i$ is a smooth vector field on $M$
and $A_i(x)=\Pi(x)e_i$ for every $x\in M$, where $\Pi(x):\R^k\to T_x M$ denotes the orthogonal projection
(induced by $g$) from $\R^k$ to $M$. According to \cite[Chapter 1]{EL} or \cite[Example 3.1]{FL} we know that
the following properties hold for $\{A_i\}_{i=1}^k$,
\begin{equation}\label{e4-1}
\sum_{i=1}^k \left(\langle A_i(x), w\rangle\right)^2=|w|^2, \ \ \forall\ w\in T_x M, x\in M.
\end{equation}
\begin{equation}\label{e4-2}
\sum_{i=1}^k\nabla_{A_i}A_i(x)=0,\ \forall\ x\in M.
\end{equation}
Therefore, by \eqref{e4-1} and \eqref{e4-2} we have the following expression for $\mathscr{L}_t$ defined by
\eqref{e2-8a},
\begin{equation}\label{e4-2a}
\mathscr{L}_t v(x)=\nu\Delta v(x)+\nabla_{A_0(t,\cdot)} v(x),\ \ \forall\ v\in C^2(\Gamma(TM)).
\end{equation}
Given  $v_0\in W^{2,p}\left(\Gamma(TM)\right)\cap L^p_{{\rm div}}\left(\Gamma(TM)\right)$ for some $p>d$, let us consider the following forward-backward stochastic differential system,
\begin{equation}\label{e4-3}
\begin{cases}
& dU_s^{t,u}=\sqrt{2\nu}\sum_{i=1}^k \h_{A_i}(U_s^{t,u})\circ dB_s^i-\h_{v(s)}(U_s^{t,u}) ds,\ \ 0\le t\le s \le T,\\
& d\tilde Y_s^{t,u}=\sum_{i=1}^k \tilde Z_s^{t,u,i}dB_s^i-\tilde F_{v(s)}\left(U_s^{t,u}\right)ds,\\
& U_t^{t,u}=u,\ \tilde Y_T^{t,u}=\S(v_0)(U_T^{t,u}),\\
& v(t,x)=u\tilde Y_t^{t,u},\ \ \forall\ u\in \F(M)\ {\rm with}\ \pi(u)=x.
\end{cases}
\end{equation}
Here 
$F_{v(s)}$ is defined
by \eqref{e3-4} for vector fields $v(s,\cdot)$ and $\tilde F_{v(s)}$ is the scalarization for
$F_{v(s)}$ defined by \eqref{e2-7a} (which is in fact independent of $\tilde Y_s^{t,u}$, $\tilde Z_s^{t,u}$
by \eqref{e3-4}).
We call the quadruple $\Big(U_s^{t,u},\tilde Y_s^{t,u},\tilde Z_s^{t,u},$ $ v\Big)$ the solution of \eqref{e4-3} if
$\left(U_\cdot^{t,u},\tilde Y_\cdot^{t,u},\tilde Z_\cdot^{t,u}\right)\in $ $\mathscr{C}([t,T];\mathscr{F}(M))\times\mathscr{C}([t,T];\R^{d})\times \mathscr{M}([t,T];\R^{dk})$.
In particular, as explained in the proof of Theorem \ref{t2-2},
the term $u\tilde Y_t^{t,u}\in T_{\pi(u)} M$ only depends on the value of $\pi(u)$.
Now we will study the equivalence between \eqref{e4-3} and Navier-Stokes equations on $M$.

\begin{theorem}\label{t4-1}
We assume that $v_0\in C_b^{2+\alpha}(\Gamma(TM))$ for some $\alpha\in (0,1)$ and $\div v_0=0$.
\begin{itemize}
\item [(1)] Suppose  that $\Big(U_s^{t,u},\tilde Y_s^{t,u},\tilde Z_s^{t,u},$ $ v\Big)$ is
a solution of forward-backward stochastic differential system \eqref{e4-3} such that
$\left(U_\cdot^{t,u},\tilde Y_\cdot^{t,u},\tilde Z_\cdot^{t,u}\right)\in $ $\mathscr{C}([t,T];\mathscr{F}(M))\times\mathscr{C}([t,T];\R^{d})$ $\times \mathscr{M}([t,T];\R^{dk})$ and
$v\in C_b^{1,2+\alpha}\left([0,T];\Gamma(TM)\right)$; then
$v$ is a solution of  incompressible Navier-Stokes equation \eqref{e3-2} on $M$.

\item [(2)] On the other hand, suppose that $v\in C_b^{1,2+\alpha}\left([0,T];\Gamma(TM)\right)$ is the (classical) solution for
\eqref{e3-2}. Let $U_s^{t,u}$ be the solution of
first equation in \eqref{e4-3} with such $v$ (which is in fact a $\F(M)$-valued SDE) and set
$\tilde Y_s^{t,u}:=\S\left(v(s)\right)\left(U_s^{t,u}\right)$,
$\tilde Z_s^{t,u}:=
\Big(\S\left(\nabla_{A_1}v(s)\right)\left(U_s^{t,u}\right),\cdots, \S\left(\nabla_{A_k}v(s)\right)\left(U_s^{t,u}\right)\Big)$.
Then $\left(U_\cdot^{t,u},\tilde Y_\cdot^{t,u},\tilde Z_\cdot^{t,u}\right)\in $
$\mathscr{C}([t,T];$ $\mathscr{F}(M))\times\mathscr{C}([t,T];\R^{d})$ $\times \mathscr{M}([t,T];\R^{dk})$ and $\left(U_s^{t,u},\tilde Y_s^{t,u},\tilde Z_s^{t,u},v\right)$ is a solution of
\eqref{e4-3}.
\end{itemize}
\end{theorem}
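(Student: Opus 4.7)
The plan is to observe that \eqref{e4-3} is a special instance of the general FBSDE \eqref{e2-7} from Section~2, with $m=1$, $n=0$, diffusion vector fields $\sqrt{2\nu}\,A_i$ for $1\le i\le k$, drift $A_0(s,\cdot):=-v(s,\cdot)$, terminal data $h=v_0$, and generator $F(s,x,\tilde v,\tilde w):=F_{v(s)}(x)$ (trivially independent of the $\tilde v,\tilde w$ arguments, so Assumption~\ref{a2-1} reduces to checking spatial regularity of $F_{v(s)}$). Using \eqref{e4-1}--\eqref{e4-2} and \eqref{e4-2a}, the second-order operator \eqref{e2-8a} for these data is $\mathscr{L}_s v = \nu\Delta v - \nabla_{v(s)}v$, so the associated semi-linear tensor PDE \eqref{e2-8} reads
\begin{equation*}
\partial_s v(s,x) = -\nu\Delta v(s,x) + \nabla_{v(s)}v(s,x) - F_{v(s)}(x), \qquad v(T,\cdot)=v_0.
\end{equation*}
Both parts of the theorem then amount to identifying this reduced PDE with the Navier-Stokes system \eqref{e3-2}.

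For part (2), given a classical solution $v\in C_b^{1,2+\alpha}$ of \eqref{e3-2} with $\div v=0$, the pressure satisfies \eqref{e3-3}, i.e.\ $\Delta_g p = \div(\nabla_v v) - \nu\div(\ric^\sharp v)$; since this right-hand side lies in $L^p_0(M)$ by Stokes' theorem on the closed manifold $M$, the definition \eqref{e3-4} yields $\nabla p(t,\cdot) = \nabla\Delta_g^{-1}\bigl(\div(\nabla_v v) - \nu\div(\ric^\sharp v)\bigr) = F_{v(t)}$ (the gradient kills the time-dependent additive constant in $p$). Substituting into \eqref{e3-2} produces exactly the reduced PDE above, and Theorem~\ref{t2-1} furnishes the FBSDE solution in the form $\bigl(U^{t,u}_s,\S(v(s))(U^{t,u}_s),(\S(\nabla_{A_i}v(s))(U^{t,u}_s))_{1\le i\le k}\bigr)$.

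For part (1), starting from an FBSDE solution with $v\in C_b^{1,2+\alpha}$, I would either invoke Theorem~\ref{t2-2} directly or, more transparently, apply It\^o's formula to $s\mapsto \S(v(s))(U^{t,u}_s)$ along \eqref{e4-3} exactly as in the proof of Theorem~\ref{t2-1}; uniqueness of the backward equation then forces $v$ to satisfy the reduced PDE. Since $F_v=\nabla p$ with $p:=\Delta_g^{-1}(\div(\nabla_v v)-\nu\div(\ric^\sharp v))$ by construction, the momentum part of \eqref{e3-2} is immediate. It remains to verify $\div v(s,\cdot)\equiv 0$. Applying $\div$ to the reduced PDE and using the Riemannian identity $\div(\Delta v)=\Delta_g(\div v)+\div(\ric^\sharp v)$ (for not necessarily divergence-free $v$) together with $\Delta_g\Delta_g^{-1}=\mathrm{id}$ on $L^p_0(M)$, the terms $\div(\nabla_v v)$ and $\nu\div(\ric^\sharp v)$ cancel, leaving the backward linear heat equation
\begin{equation*}
\partial_s(\div v)(s,x) = -\nu\Delta_g(\div v)(s,x), \qquad (\div v)(T,\cdot)=\div v_0 = 0.
\end{equation*}
Uniqueness for this backward heat equation yields $\div v\equiv 0$ on $[0,T]\times M$.

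The main technical obstacle is the commutator identity $\div(\Delta v)=\Delta_g(\div v)+\div(\ric^\sharp v)$, which refines the incompressible version cited from \cite[Page 587 (17)]{P} and is what makes the heat equation for $\div v$ close; without it, the incompressibility in part (1) would not propagate from the terminal time backwards. A secondary point is checking that $F_{v(s)}$ has enough spatial regularity for the hypotheses of Theorem~\ref{t2-2}, which follows from the Schauder-type estimates on $\nabla\Delta_g^{-1}$ applied to the right-hand side of \eqref{e3-4}; otherwise one bypasses Theorem~\ref{t2-2} and argues by the direct It\^o expansion mentioned above, which requires only $v\in C_b^{1,2}$.
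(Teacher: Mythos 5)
Your proposal is correct and takes essentially the same route as the paper: reduce \eqref{e4-3} to the Section 2 framework using \eqref{e4-1}--\eqref{e4-2a} (Theorem \ref{t2-2} for part (1), the It\^o argument of Theorem \ref{t2-1} for part (2)), identify $F_{v}$ with $\nabla p$, and close part (1) by showing $\div v$ satisfies a backward heat equation whose uniqueness forces $\div v\equiv 0$. The commutator identity $\div(\Delta v)=\Delta_g(\div v)+\div(\ric^{\sharp} v)$ that you state but do not derive is precisely what the paper establishes, by decomposing $v=\p(v)+\nabla\Delta_g^{-1}\div v$ and combining Pierfelice's identity on the divergence-free part with $\Delta\nabla f=\nabla\Delta_g f+\ric^{\sharp}\nabla f$ on the gradient part.
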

\begin{proof}
Suppose that $\Big(U_s^{t,u},\tilde Y_s^{t,u},\tilde Z_s^{t,u},$ $ v\Big)$ is a solution to \eqref{e4-3}.
Set $G_v(s,x):=\tilde F_{v(T-s)}(x)$ for every $(s,x)\in [0,T]\times M$.
Since $v\in C_b^{1,2+\alpha}\big([0,T]$ $;\Gamma(TM)\big)$, with the same arguments in Lemma \ref{l3-2} and \ref{l3-4} and using the associated estimates
in H\"older spaces we will obtain that $G_v\in C_b^{1,2+\alpha}([0,T];\Gamma(TM))$.

Note that $\Big(U_s^{t,u},\tilde Y_s^{t,u},\tilde Z_s^{t,u},$ $ v\Big)$ is a solution to \eqref{e4-3}, we can view
$\Big(U_s^{t,u},\tilde Y_s^{t,u},\tilde Z_s^{t,u}\Big)$ as the solution of the FBSDE consisting of the
first three equations in \eqref{e4-3} with $v$ already known. Since $v(t,x):=u\tilde Y_t^{t,u}$ with $\pi(u)=x$,
for such FBSDE we can apply Theorem \ref{t2-2} to obtain that $v\in C_b^{1,2+\alpha}([0,T];\Gamma(TM))$ satisfies the following equation
on $TM$,
\begin{equation}\label{t4-1-1}
\begin{cases}
&\frac{\partial v(t,x)}{\partial t}=-\nu \Delta v(t,x)+\nabla_{v(t)}v(t)(x)
-F_{v(t)}(x),\ t\in [0,T],\\
&v(T,x)=v_0(x).
\end{cases}
\end{equation}
Here we have also applied  property \eqref{e4-2a}. Therefore, according to the expression \eqref{e3-4} for
$F_{v(s)}$, to obtain conclusion of (1) it
only remains to prove that $\div v(t,x)=0$ for all $t\in [0,T]$ (note that here we can take
$p=\Delta_g^{-1}\left({\rm div}(\nabla_v v)-\nu{\rm div}({\rm Ric}^{\sharp}v)\right)$ in equation \eqref{e3-2}).

Note that
\begin{equation}\label{t4-1-2}
\begin{split}
\div F_{v(t)}(x)&=\div\nabla \Delta_g^{-1}\left(\div\left(\nabla_{v(t)} v(t)-\nu\ric^{\sharp}v(t)\right)\right)(x)\\
&=\div\left(\nabla_{v(t)} v(t)-\nu\ric^{\sharp}v(t)\right)(x).
\end{split}
\end{equation}
For $v=(v-\nabla \Delta_g^{-1}\div v)+
\nabla \Delta_g^{-1}\div v:=v_1+\nabla \Delta_g^{-1}\div v$, we have
\begin{align*}
\div \Delta v(t,x)&=\div \Delta v_1(t,x)+\div \Delta \nabla \Delta_g^{-1}\div v(t,x)=:w_1(t,x)+w_2(t,x).
\end{align*}
As explained before, since $\div v_1=0$ and according to \cite[Page 587]{P}, we have $w_1(t,x)=\div \ric^{\sharp}v_1$.
On the other hand we have
\begin{align*}
w_2(t,x)&=\div \Delta \nabla \Delta_g^{-1}\div v(t,x)\\
&=\div \nabla \Delta_g \Delta_g^{-1}\div v(t,x)+\div \ric^{\sharp}\left(\nabla \Delta_g^{-1}\div v\right)(t,x)\\
&=\Delta_g \div v(t,x)+\div \ric^{\sharp}\left(\nabla \Delta_g^{-1}\div v\right)(t,x),
\end{align*}
where we used the property  $\Delta \nabla f=\nabla \Delta_g f+\ric^{\sharp}\nabla f$ for every
$f\in C^3(M)$. Hence the following identity holds
\begin{equation}\label{t4-1-3}
\div \Delta v(t,x)=w_1(t,x)+w_2(t,x)=\Delta_g \div v(t,x)+\div \ric^{\sharp} v(t,x).
\end{equation}
Combining all the above properties  and applying the divergence operator to both sides of \eqref{t4-1-1} we obtain the
following equation for $\div v$,
\begin{equation*}
\begin{cases}
&\frac{\partial \div v(t,x)}{\partial t}=-\nu\Delta_g \div v(t,x),\\
&\div v(T,x)=\div v_0(x)=0.
\end{cases}
\end{equation*}
We remark that, although $\Delta_g \div v(t,x)$ may not be defined in the classical sense
(since we only assume that $v\in C_b^{1,2+\alpha}([0,T]\times\Gamma(TM))$), by standard approximation arguments the
linear equation for $\div v$ can be interpreted in a weak sense. Therefore, according to
the uniqueness of linear equations, we  immediately have $\div v(t,x)=0$ for every
$t\in [0,T]$. By now we have finished the proof of the desired conclusion (1).

On the other hand, if $v\in C_b^{1,2+\alpha}([0,T];\Gamma(TM))$, let $U_s^{t,u}$ be the solution of
first equation in \eqref{e4-3} with such $v$ and define
$\tilde Y_s^{t,u}:=\S\left(v(s)\right)\left(U_s^{t,u}\right)$,
$\tilde Z_s^{t,u}:=\Big(\S\left(\nabla_{A_1}v(s)\right)\left(U_s^{t,u}\right),$ $\cdots, \S\left(\nabla_{A_k}v(s)\right)\left(U_s^{t,u}\right)\Big)$.
Based on the fact $v\in C_b^{1,2+\alpha}([0,T];\Gamma(TM))$, it is not difficult to verify that
$\left(U_\cdot^{t,u},\tilde Y_\cdot^{t,u},\tilde Z_\cdot^{t,u}\right)\in $ $\mathscr{C}([t,T];\mathscr{F}(M))\times\mathscr{C}([t,T];\R^{d})$ $\times \mathscr{M}([t,T];\R^{dk})$.
As in the proof of Theorem \ref{t2-1}, applying It\^o's formula to $\S\left(v(s)\right)\left(U_s^{t,u}\right)$ and using  properties \eqref{t2-1-1},
\eqref{t2-1-2}, \eqref{e3-2}, \eqref{e3-3} and \eqref{e3-4} we can prove the conclusion that
$\left(U_s^{t,u},\tilde Y_s^{t,u},\tilde Z_s^{t,u},v\right)$ is a solution of
\eqref{e4-3} directly.
\end{proof}

\begin{remark}\label{r4-1}
We give a stochastic representation for the incompressible Navier-Stokes equation \eqref{e3-2} on $M$ via the  forward-backward
stochastic differential system \eqref{e4-3} in Theorem \ref{t4-1}. In \eqref{e3-2} we use
the Bochner Laplacian operator $\Delta=\text{Tr}(\nabla^2)$. Using our methods the Bochner Laplacian
operator $\Delta$ can be replaced by the Hodge-de Rham Laplacian operator $-\Box:=\Delta-\text{Ric}^\sharp$. By
the proof of Theorem \ref{t4-1} we will obtain that there is a solution of the incompressible Navier-Stokes equation associated with
$\Box$, which can be represented by the forward-backward stochastic
differential system as follows,
\begin{equation*}
\begin{cases}
& dU_s^{t,u}=\sqrt{2\nu}\sum_{i=1}^k \h_{A_i}(U_s^{t,u})\circ dB_s^i-\h_{v(s)}(U_s^{t,u}) ds,\ \ 0\le t\le s \le T,\\
& d\tilde Y_s^{t,u}=\sum_{i=1}^k \tilde Z_s^{t,u,i}dB_s^i-\tilde F_{v(s)}\left(U_s^{t,u}\right)ds+
\nu\S\left(\text{Ric}^\sharp \left(U_s^{t,u}Y_s^{t,u}\right)\right)\left(U_s^{t,u}\right)ds,\\
& U_t^{t,u}=u,\ \tilde Y_T^{t,u}=\S(v_0)(U_T^{t,u}),\\
& v(t,x)=u\tilde Y_t^{t,u},\ \ \forall\ u\in \F(M)\ {\rm with}\ \pi(u)=x.
\end{cases}
\end{equation*}
\end{remark}

Theorem \ref{t4-1} provides a reasonable way to prove the (local) existence of solutions for incompressible Navier-Stokes equations
on $M$ by establishing the existence of solutions of the associated forward-backward stochastic differential systems.
Now we will apply Theorem \ref{t4-1} to prove the local existence of a solution of \eqref{e3-2} in  a Sobolev space.

For this we need some preliminary results. For every $p>1$, $T>0$, define
\begin{equation*}
\begin{split}
\C_T^\infty:=\Big\{w\in C_b^{1,\infty}([0,T];\Gamma(TM)); \div w(t,x)=0\ \text{for\ every}\ (t,x)\in [0,T]\times M \Big\},
\end{split}
\end{equation*}
and
\begin{align*}
C\left([0,T];W^{2,p}_{{\rm div}}(\Gamma(TM))\right):=&\Big\{w:[0,T]\to W^{2,p}(\Gamma(TM))\ \text{is\ continuous},\\
&\text{and}\ \div w(t,x)=0\ \text{for\ every}\ (t,x)\in [0,T]\times M\Big\}
\end{align*}
with the norm
\begin{equation*}
\|w\|_{2,p,T}:=\sup_{t\in [0,T]}\|w(t)\|_{2,p},\ \ \forall\ w\in C\left([0,T];W_{{\rm div}}^{2,p}(\Gamma(TM))\right).
\end{equation*}

\begin{lemma}\label{l4-1}

Given $T>0$ and $w\in \C_T^\infty$, let $\{U_s^{t,u}\}_{0\le t\le s\le T}$ be the solution of following
$\F(M)$-valued SDE,
\begin{equation}\label{l4-1-1}
\begin{cases}
&dU_s^{t,u}=\sqrt{2\nu}\sum_{i=1}^k\h_{A_i}\left(U_s^{t,u}\right)\circ dB_s^i+\h_{w(s)}\left(U_s^{t,u}\right)ds,\\
&U_t^{t,u}=u,
\end{cases}
\end{equation}
and define $X_s^{t,x}:=\pi(U_s^{t,u})$ for every $u\in \F(M)$ with $\pi(u)=x$. Then there exists a constant
$C_6>0$ which is independent of $w$, $\nu$ and $T$, such that, for every non-negative $f\in C^1(M)$, the following estimate holds,
\begin{equation}\label{l4-1-2}
\E\left[\int_M f\left(X_s^{t,x}\right)\mu(dx)\right]\le e^{C_6(1+\nu)T}\|f\|_1,
\end{equation}
\end{lemma}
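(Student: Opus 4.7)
The key idea is that the Riemannian volume measure $\mu$ is invariant (in fact, preserved in expectation) for the diffusion $X_s^{t,x}$, because of the divergence-free condition on $w$. This will give the estimate with constant $1$, hence with any $C_6>0$.

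First I would identify the generator of $X_s^{t,x}=\pi(U_s^{t,u})$. Converting the Stratonovich SDE \eqref{l4-1-1} to It\^o form and projecting to $M$, the generator acting on $f\in C^2(M)$ is
\begin{equation*}
\mathcal{L}_r f(x) = \nu\sum_{i=1}^k A_i(A_i f)(x) + \langle w(r,x),\nabla f(x)\rangle.
\end{equation*}
Using \eqref{e4-1} and \eqref{e4-2} exactly as in the derivation of \eqref{e4-2a} (applied to functions rather than vector fields), we have $\sum_{i=1}^k A_i(A_i f)=\Delta_g f$, so $\mathcal{L}_r f = \nu\Delta_g f + \langle w(r),\nabla f\rangle$.

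Next, by a standard mollification argument on $M$, I would reduce to the case $f\in C^\infty(M)$ with $f\ge 0$. Since $w\in \C_T^\infty$ has smooth coefficients and $M$ is compact, the function
\begin{equation*}
h(r,x):=\E\bigl[f(X_s^{r,x})\bigr],\qquad r\in[t,s],\ x\in M,
\end{equation*}
belongs to $C^{1,2}([t,s]\times M)$ and solves the backward Kolmogorov equation
\begin{equation*}
\partial_r h(r,x)+\nu\Delta_g h(r,x)+\langle w(r,x),\nabla h(r,x)\rangle=0,\qquad h(s,\cdot)=f.
\end{equation*}
By Fubini's theorem,
\begin{equation*}
\E\!\left[\int_M f(X_s^{t,x})\mu(dx)\right]=\int_M h(t,x)\mu(dx).
\end{equation*}

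The main (and only) computation is to differentiate this integral in $r$. Since $h(\cdot,x)$ is $C^1$ uniformly in $x$ on the compact manifold $M$,
\begin{equation*}
\frac{d}{dr}\int_M h(r,x)\mu(dx)=\int_M\partial_r h(r,x)\mu(dx)=-\int_M\bigl[\nu\Delta_g h(r,x)+\langle w(r,x),\nabla h(r,x)\rangle\bigr]\mu(dx).
\end{equation*}
Since $M$ has no boundary, the first term vanishes by self-adjointness of $\Delta_g$ applied against the constant $1$, and the second term equals $\int_M h(r,x)\,\mathrm{div}\,w(r,x)\,\mu(dx)=0$ by the definition of $\C_T^\infty$. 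Therefore
\begin{equation*}
\int_M h(t,x)\mu(dx)=\int_M h(s,x)\mu(dx)=\int_M f(x)\mu(dx)=\|f\|_1,
\end{equation*}
from which \eqref{l4-1-2} follows for any choice of $C_6>0$ (in fact with constant $1$ on the right-hand side).

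\textbf{Main obstacle.} There is no real obstacle: the divergence-free assumption on $w$ does all the work, making $\mu$ an invariant measure for the flow. The only technical point is justifying the regularity $h\in C^{1,2}$ (needed to legitimately apply the backward Kolmogorov equation and pass $\partial_r$ under the integral), which follows from the smoothness of $w$ and $A_i$ and compactness of $M$ via standard SDE flow theory; for $f\in C^1(M)$ one first smooths $f$ and then passes to the limit using that $\|P_{t,s}f_n-P_{t,s}f\|_1\to 0$ when $f_n\to f$ in $L^1(\mu)$.
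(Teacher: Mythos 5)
Your proof is correct, but it follows a genuinely different route from the paper. The paper argues pathwise: it invokes the quasi-invariance/change-of-variables formula of \cite[Proposition 4.4]{Z} for the stochastic flow, which gives a.s.
\begin{equation*}
\int_M f\left(X_s^{t,x}\right)\mu(dx)=\int_M \exp\left(\sqrt{2\nu}\sum_{i=1}^k\int_t^s \div A_i\left(X_r^{t,x}\right)\circ dB_r^i+\int_t^s \div w\left(r,X_r^{t,x}\right)dr\right)f(x)\mu(dx),
\end{equation*}
drops the drift term using $\div w=0$, and then takes expectations of the remaining stochastic exponential; since the $A_i$ are \emph{not} divergence free, this exponential is only controlled in expectation by $e^{C_6(1+\nu)T}$, which is exactly where the constant in \eqref{l4-1-2} comes from. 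You instead work directly at the level of expectations: you identify the generator $\nu\Delta_g+\langle w(r),\nabla\cdot\rangle$ (your computation $\sum_i A_i(A_i f)=\Delta_g f$ via \eqref{e4-1}, \eqref{e4-2} is the scalar analogue of \eqref{e4-2a} and is correct), use the backward Kolmogorov equation for $h(r,\cdot)=P_{r,s}f$, and observe that $\int_M \mathcal{L}_r g\,d\mu=0$ for all smooth $g$ because $\Delta_g$ is symmetric with respect to $\mu$ on the closed manifold and $\div w=0$. This shows $\mu$ is exactly invariant for the two-parameter semigroup and yields \eqref{l4-1-2} with constant $1$, which is sharper than the paper's bound and entirely sufficient for the downstream uses of the lemma (only the expectation estimate is ever used). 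What the paper's argument buys is the stronger pathwise density formula; what yours buys is a cleaner constant and the observation that the non-vanishing divergences of the $A_i$ are irrelevant after taking expectations.

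Two small points to tidy up. First, the sign: $\int_M \langle w,\nabla h\rangle\,d\mu=-\int_M h\,\div w\,d\mu$, not $+$; harmless since $\div w=0$. Second, your closing approximation step is phrased circularly: justifying $\|P_{t,s}f_n-P_{t,s}f\|_{1}\to 0$ from mere $L^1$ convergence of $f_n$ would already require an $L^1$-boundedness statement of the type being proved. Since $f\in C^1(M)$ and $M$ is compact, simply mollify so that $f_n\to f$ \emph{uniformly}; then $\left|P_{t,s}f_n-P_{t,s}f\right|\le \|f_n-f\|_\infty$ pointwise and the passage to the limit in both sides of the identity is immediate.
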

\begin{proof}
By \eqref{l4-1-1} we know that $\{X_s^{t,x}\}_{0\le t\le s\le T}$ satisfies the following $M$-valued SDE,
\begin{equation*}
\begin{cases}
& dX_s^{t,x}=\sqrt{2\nu}\sum_{i=1}^k A_i\left(X_s^{t,x}\right)\circ dB_s^i+w\left(s,X_s^{t,x}\right)ds,\\
& X_t^{t,x}=x.
\end{cases}
\end{equation*}
So according to \cite[Proposition 4.4]{Z} we have, for all $0\le t\le s\le T$,
\begin{align*}
&\quad \int_M f(X_s^{t,x})\mu(dx)\\
&=\int_M \exp\left(\sqrt{2\nu}\sum_{i=1}^k \int_t^s \div A_i(X_s^{t,x})\circ dB^i_s+
\int_t^s \div w(s,X_s^{t,x})ds\right)f(x)\mu(dx)\\
&=\int_M \exp\left(\sqrt{2\nu}\sum_{i=1}^k \int_t^s \div A_i(X_s^{t,x})\circ dB^i_s
\right)f(x)\mu(dx)\ \ \text{a.s.},
\end{align*}
where the last step we have used the fact that $\div w(t,x)=0$ for all $t\in [0,T]$.
Hence taking the expectation in both sides of the equality above we derive
\eqref{l4-1-2}.
\end{proof}

\begin{lemma}\label{l4-3}
Given $T>0$ and $w\in \C_T^\infty$, let $\{U_s^{t,u}\}$ and $X_s^{t,x}:=\pi(U_s^{t,u})$ be as in Lemma \ref{l4-1}.
Define
$R_s^{t,x,i}:=DX_s^{t,\cdot}(A_i(x))$, where
$DX_s^{t,\cdot}: TM \to TM$ represents the tangent map for
$X_s^{t,\cdot}:M \to M$. Then, for every $q\ge 2$, there exists a constant $C_7>0$ independent of $\nu$ such that
for every $0\le t \le s\le T$ and $1\le i \le k$,
\begin{equation}\label{l4-3-1}
\E\left[\left|R_s^{t,x,i}\right|^q|\mathscr{G}_T^{t,u}\right]\le \exp\left(C_7\left(1+\nu+\sup_{s\in [0,T]}\|w(s)\|_{1,\infty}\right)T\right),
\end{equation}
where $\mathscr{G}_T^{t,u}:=\sigma\{U_s^{t,u};\ t\le s \le T\}$.
\end{lemma}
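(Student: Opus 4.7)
The plan is to reduce \eqref{l4-3-1} to a linear SDE estimate on $\R^d$ via scalarization in the frame bundle, and then use the Elworthy--LeJan--Li noise decomposition to handle the conditional expectation.

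First I would set $V_s^{t,x,i} := (U_s^{t,u})^{-1} R_s^{t,x,i} \in \R^d$. Since $U_s^{t,u}$ is an isometric isomorphism between $\R^d$ and $T_{X_s^{t,x}}M$, one has $|V_s^{t,x,i}| = |R_s^{t,x,i}|$, so it suffices to bound conditional moments of $V_s^{t,x,i}$. Differentiating the flow of the first equation in \eqref{l4-1-1} and writing the Jacobi-type equation of the derivative process in the trivialisation induced by $U_s^{t,u}$ leads to a linear SDE of the form
\begin{equation*}
dV_s^{t,x,i} = \sqrt{2\nu}\sum_{j=1}^k \mathcal{A}_j\big(U_s^{t,u}\big) V_s^{t,x,i}\circ dB_s^j + \mathcal{B}\big(U_s^{t,u}, w(s)\big)V_s^{t,x,i}\, ds,
\end{equation*}
where $\mathcal{A}_j$ depends on the scalarisation of $\nabla A_j$ (hence is bounded uniformly on $\F(M)$ by smoothness of the $A_j$'s) and $\|\mathcal{B}(U, w(s))\| \le C(1+\|w(s)\|_{1,\infty})$ by direct inspection.

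The crucial step is the noise decomposition of \cite{ELL,EL}. Using \eqref{e4-1}, at every point $x\in M$ the map $e\mapsto \Pi(x)e$ splits $\R^k$ into the tangential subspace (identified with $T_x M$) and its orthogonal complement. Correspondingly one decomposes $dB_s = dB_s^{\text{tan}} + dB_s^{\text{tr}}$ along the flow: the tangential part is precisely what drives $U_s^{t,u}$, hence is measurable with respect to $\mathscr{G}_T^{t,u}$, while $B^{\text{tr}}_\cdot$ remains a Brownian motion under the conditional law $\P(\cdot\mid\mathscr{G}_T^{t,u})$ with the correct quadratic variation. The identity \eqref{e4-2} is what makes the It\^o correction upon the tangential Stratonovich contribution compatible (it kills the $O(1)$ term that would otherwise be incompatible with conditioning). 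After applying It\^o's formula to $|V_s^{t,x,i}|^q$, converting Stratonovich to It\^o, and taking conditional expectation, the transverse-noise integrals are genuine martingales and vanish, while the tangential contribution together with the original drift is pointwise bounded by $C(1+\nu+\|w(s)\|_{1,\infty})|V_s^{t,x,i}|^q$.

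The conditional version of Gronwall's inequality then yields
\begin{equation*}
\E\!\left[|V_s^{t,x,i}|^q \,\big|\, \mathscr{G}_T^{t,u}\right] \le |A_i(x)|^q \exp\!\Big(C(1+\nu+\sup_{r\in[0,T]}\|w(r)\|_{1,\infty})(s-t)\Big),
\end{equation*}
which is exactly \eqref{l4-3-1} after enlarging constants and using boundedness of $|A_i(x)|$ on the compact manifold $M$. The main obstacle is the careful justification of the noise decomposition in the conditional setting: one must verify that the pieces of $\sqrt{2\nu}\sum_j \mathcal{A}_j(U_s)V_s\circ dB^j$ which become deterministic after conditioning on $\mathscr{G}_T^{t,u}$ only contribute a bounded-variation term of total variation $O((1+\nu+\|w\|_{1,\infty})T)$, rather than an unbounded quadratic variation. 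This is where \eqref{e4-1}--\eqref{e4-2} are essential; without them the naive bound would blow up in $\nu$ in an uncontrolled way, and the conditional approach would collapse.
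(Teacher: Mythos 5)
Your overall strategy (pass to the derivative flow, use the Elworthy--LeJan--Li decomposition of the noise, then a conditional Gronwall/exponential estimate) is the same as the paper's, but the decisive step is not actually carried out, and the way you describe it would fail. You split $dB_s$ into a tangential part (measurable with respect to $\mathscr{G}_T^{t,u}$) and a transverse part, and then assert that after conditioning the tangential stochastic integrals ``only contribute a bounded-variation term of total variation $O((1+\nu+\|w\|_{1,\infty})T)$.'' This is false as stated: conditioning on $\mathscr{G}_T^{t,u}$ does not turn an integral against a $\mathscr{G}_T^{t,u}$-measurable Brownian-type integrator into a finite-variation term, and such a term cannot be absorbed into a Gronwall drift estimate; nor does \eqref{e4-2} ``kill'' it --- \eqref{e4-2} only serves to identify the generator as $\nu\Delta+\nabla_{A_0}$ and plays no role in the derivative-flow estimate. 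What actually saves the argument, and what the paper invokes by citing \cite[Theorems 3.1, 3.2]{ELL} (see also \cite[Chapter 3]{EL}), is a structural property of the gradient system $A_i(x)=\Pi(x)e_i$: $\nabla_v A_i(x)$ depends only on the component of $e_i$ normal to $T_xM$ (it is given by the second fundamental form), so the martingale part of the derivative equation is driven \emph{exclusively} by the redundant noise, while the Stratonovich-to-It\^o correction coming from the tangential directions produces precisely the drift $-\nu\,\ric^\sharp$. This yields the conditioned covariant SDE \eqref{l4-3-1a}, driven by $d\tilde\beta_s=//_s^{t,x}d\beta_s$ with $\beta$ an $\R^k$-Brownian motion independent of $\mathscr{G}_T^{t,u}$; there is no leftover tangential-noise term at all, bounded variation or otherwise.

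Once \eqref{l4-3-1a} is in hand, your concluding computation is essentially the paper's: It\^o's formula for $|R_s^{t,x,i}|^q$ gives a scalar linear SDE whose drift is bounded by $c\bigl(1+\nu+\sup_{s\in[0,T]}\|w(s)\|_{1,\infty}\bigr)$ (the Ricci term is bounded by $c\nu$ since $M$ is compact), the solution is an explicit stochastic exponential, and taking $\E^{\beta}$ (expectation over $\beta$ only, which is the conditional expectation given $\mathscr{G}_T^{t,u}$) kills the exponential martingale factor, giving \eqref{l4-3-1}. So the gap is concentrated exactly where you flagged it yourself: you must either quote the ELL decomposition theorem, or prove directly that the diffusion coefficient $\nabla_{R}A_m$ annihilates the tangential noise directions and that the resulting It\^o correction is $-\nu\,\ric^\sharp$; the properties \eqref{e4-1}--\eqref{e4-2} alone, as you use them, do not substitute for this.
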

\begin{proof}
By \cite[Theorem 3.1]{ELL} and the first equality in the proof of
\cite[Theorem 3.2]{ELL} (for our choice of $\{A_i\}_{i=1}^k$, both of the L-W connection $\breve{\nabla}$
and its dual connection $\widehat{\nabla}$ are exactly  the Levi-Civita connection, see also \cite[Chapter 3]{EL}), there exists
a $\mathscr{G}_T^{t,u}$-measurable metric adapted translation $//_s^{t,x}:\R^k \to \R^k$
and an
$\R^{k}$-valued Brownian motion
$\left\{\beta_s=\left(\beta_1^1,\cdots,\beta_s^k\right); s\in [t,T]\right\}$ which is independent of $\mathscr{G}_T^{t,u}$
such that
\begin{equation}\label{l4-3-1a}
\begin{cases}
&\mathbb{D}_s R_s^{t,x,i}=\sum_{m=1}^k \sqrt{2\nu}\nabla_{R_s^{t,x,i}}A_m\left(X_s^{t,x}\right) d\tilde{\beta}^m_s-
\nu\text{Ric}^\sharp_{X_s^{t,x}}R_s^{t,x,i}ds+\nabla_{R_s^{t,x,i}}w\left(s,X_s^{t,x}\right)ds,\\
& R_t^{t,x,i}=A_i(x),
\end{cases}
\end{equation}
where $\mathbb{D}_s$ denotes the stochastic covariant differential on $TM$ and $d\tilde{\beta}_s:=//_s^{t,x}d\beta_s$.
Since $//_s^{t,x}:\R^k \to \R^k$ is metric adapted, by Levy's characterization $\{\tilde{\beta}_s\}_{s\in[0,T]}$ is still
an $\mathbb{R}^k$-valued Brownian motion.
Hence applying It\^o's formula we obtain,
for every $q\ge 2$,
\begin{equation}\label{l4-3-2}
\begin{split}
d\left|R_s^{t,x,i}\right|^q= q\sum_{m=1}^k M_s^{t,m} \left|R_s^{t,x,i}\right|^q d\tilde{\beta}_s^m+
\left(\frac{q}{2}N_s^t+\frac{q(q-2)}{2}\sum_{m=1}^k |M_s^{t,m}|^2\right) \left|R_s^{t,x,i}\right|^q ds,
\end{split}
\end{equation}
where
\begin{align*}
& M_s^{t,m}:=\sqrt{2\nu}\frac{\left\langle \nabla_{R_s^{t,x,i}}A_m\left(X_s^{t,x}\right),
R_s^{t,x,i}\right\rangle}{|R_s^{t,x,i}|^2},\\
& N_s^t=2\nu\sum_{m=1}^k \frac{\left|\nabla_{R_s^{t,x,i}}A_m\left(X_s^{t,x}\right)\right|^2}{|R_s^{t,x,i}|^2}-
\frac{2\nu\left\langle \text{Ric}^\sharp_{X_s^{t,x}}R_s^{t,x,i}, R_s^{t,x,i} \right\rangle}{|R_s^{t,x,i}|^2}+
\frac{2\left\langle \nabla_{R_s^{t,x,i}}w\left(s,X_s^{t,x}\right),
R_s^{t,x,i}\right\rangle}{|R_s^{t,x,i}|^2}.
\end{align*}
Therefore, according to the  linear SDE \eqref{l4-3-2}, we have
\begin{align*}
&\quad |R_s^{t,x,i}|^q\\
&=\left|A_i(x)\right|^q
\exp\left(\int_t^s \sum_{m=1}^k \left(qM_r^{t,m}d\tilde{\beta}_r^m-\frac{q^2|M_r^{t,m}|^2}{2}dr\right)
+\int_t^s\left(\frac{qN_r^t}{2}+\frac{q(q-2)}{2}\sum_{m=1}^k |M_r^{t,m}|^2\right)dr\right),
\end{align*}
which implies that
\begin{align*}
&\quad \E\left[|R_s^{t,x,i}|^q|\mathscr{G}_T^{t,u}\right]\\
&=\left|A_i(x)\right|^q\E^{\beta}\left[
\exp\left(\int_t^s \sum_{m=1}^k \left(qM_r^{t,m}d\tilde{\beta}_r^m-\frac{q^2|M_r^{t,m}|^2}{2}dr\right)
+\int_t^s\left(\frac{qN_r^t}{2}+\frac{q(q-2)}{2}\sum_{m=1}^k |M_r^{t,m}|^2\right)dr\right)\right]\\
&\le \exp\left(c_1(1+\nu+\sup_{s\in [0,T]}\|w(s)\|_{1,\infty})T\right)\E^{\beta}\left[\exp\left(\int_s^t\sum_{m=1}^k \left(
q M_r^{t,m}d\tilde{\beta}_r^m-
\frac{q^2|M_r^{t,m}|^2}{2}dr\right)\right)\right]\\
&\le  \exp\left(c_2(1+\nu+\sup_{s\in [0,T]}\| w(s)\|_{1,\infty})T\right),
\end{align*}
where $\E^{\beta}$ denotes expectation with respect to the randomness of $\{\beta_s\}_{s\in [0,T]}$, but keeping
the randomness from all $\mathscr{G}_T^{t,u}$-measurable random variables fixed, and the first inequality is due to the following estimate
\begin{align*}
| N_r^t|\le c_1(1+\nu+\sup_{s\in [0,T]}\| w(s)\|_{1,\infty}).
\end{align*}
By now we have finished the proof of \eqref{l4-3-1}.
\end{proof}
\begin{lemma}\label{l4-2}
For $T>0$ and $w\in \C_T^\infty$, let $\{U_s^{t,u}\}_{0\le t\le s\le T}$ be the solution of the
$\F(M)$-valued SDE \eqref{l4-1-1}.
Then for all $p>d$, there exists a positive constant
$C_8$ which is independent of $w$, $\nu$ and $T$, such that, for every $h\in C^2_b\left(\Gamma(TM)\right)$ and
$1\le i,j\le k$, the following estimates hold,
\begin{equation}\label{l4-2-1}
\begin{split}
&\sup_{0\le t \le s \le T}\E\left[\int_M \left|\h_{A_i}\left(\S(h)\left(U_s^{t,\cdot}\right)\right)(u(x))\right|^p\mu(dx)\right]
\le C_8\|h\|^p_{1,p}\Bigl( e^{C_8\mathbf{C}(p,T)} + e^{C_8\widetilde{\mathbf{C}}(T)}\Bigr),
\end{split}
\end{equation}
\begin{equation}\label{l4-2-2}
\begin{split}
&\quad \sup_{0\le t \le s \le T}\E\left[\int_M \left|\h_{A_i}\h_{A_j}\left(\S(h)\left(U_s^{t,\cdot}\right)\right)(u(x))\right|^p\mu(dx)\right]\\
&\le C_{8}\|h\|^p_{2,p}\Big(e^{C_8\widetilde{\mathbf{C}}(T)}+e^{C_8\mathbf{C}(2p,T)}+\mathbf{C}(p,T)e^{C_8\big(\mathbf{C}(p,T)+\mathbf{C}(2p,T)\big)}+K_T(w)^pT^p  e^{C_8\big(\widetilde{\mathbf{C}}(T)+\mathbf{C}(p,T)\big)}\Big),
\end{split}
\end{equation}
where $x\mapsto u(x)$ is a continuous map from $M$ to $\F(M)$ satisfying  $\pi(u(x))=x$,
$\h_{A_i}\left(\S(h)\left(U_s^{t,\cdot}\right)\right)(u(x))$ denotes the vector fields $\h_{A_i}$
(on $\F(M)$) acting on the $T^{1,0}_0\R^d$-valued function $u \mapsto \S(h)\left(U_s^{t,\cdot}\right)$
at point $u(x)\in \mathscr{F}(M)$, and positive constants $\mathbf{C}(p,T)$, $\widetilde{\mathbf{C}}(T)$ are defined by
\begin{equation}\label{l4-2-0}
\begin{aligned}
& \mathbf{C}(p,T)=\mathbf{C}(p,\nu,T,K_T(w))=\Bigl( \nu^{\frac{p}{2}}T^{\frac{p}{2}} +\nu^{p}T^p+K_T(w)^pT^p  \Bigr),\\
& \widetilde{\mathbf{C}}(T)=\widetilde{\mathbf{C}}(\nu,T,K_T(w))=\Bigl( 1+\nu+K_T(w) \Bigr)T
\end{aligned}
\end{equation}
with $K_T(w):=\sup_{s\in [0,T]}\|w(s)\|_{2,p}$.
\end{lemma}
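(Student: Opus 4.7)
The plan is to establish a Bismut--Elworthy--Li style representation for the horizontal derivatives $\h_{A_i}\bigl(\S(h)\circ U_s^{t,\cdot}\bigr)$ and $\h_{A_j}\h_{A_i}\bigl(\S(h)\circ U_s^{t,\cdot}\bigr)$ at a frame $u(x)\in\F(M)$, then reduce the $L^p$ integrals in $x$ to $\|h\|_{1,p}$ or $\|h\|_{2,p}$ via Lemma \ref{l4-1}, while bounding the stochastic factors by Lemma \ref{l4-3} together with a companion moment estimate on the vertical component of the lifted derivative flow on $\F(M)$.

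For \eqref{l4-2-1}, I would first decompose the derivative of $u\mapsto U_s^{t,u}$ at $u(x)$ in the horizontal direction $\h_{A_i}(u(x))$, via the Elworthy--LeJan--Li intrinsic decomposition of the stochastic flow on $\F(M)$, into a horizontal part projecting to $R_s^{t,x,i}=DX_s^{t,\cdot}(A_i(x))$ and a vertical part encoded by an $\mathfrak{so}(d)$-valued process $\Theta_s^{t,x,i}$. This yields
\[
\h_{A_i}\bigl(\S(h)\circ U_s^{t,\cdot}\bigr)(u(x))=\S\bigl(\nabla_{R_s^{t,x,i}}h\bigr)(U_s^{t,u(x)})+\Theta_s^{t,x,i}\cdot\S(h)(U_s^{t,u(x)}).
\]
Raising to the $p$-th power, integrating in $x$, taking expectation and conditioning on $\mathscr{G}_T^{t,u(x)}$, I would apply Lemma \ref{l4-3} to the $R$-factor (yielding the $e^{C_8\widetilde{\mathbf{C}}(T)}$ contribution), bound $\E\bigl[|\Theta_s^{t,x,i}|^p\mid\mathscr{G}_T^{t,u(x)}\bigr]$ by expanding its defining linear SDE --- driven by the Riemann curvature against $\sqrt{2\nu}\,dB$, by $\nu\,\ric^\sharp$, and by $\nabla w$ --- which produces $e^{C_8\mathbf{C}(p,T)}$ with $\mathbf{C}(p,T)$ exactly as in \eqref{l4-2-0}, and finally apply Lemma \ref{l4-1} to $\int_M|\nabla h(X_s^{t,x})|^p\mu(dx)$ and $\int_M|h(X_s^{t,x})|^p\mu(dx)$; the two resulting summands combine into the right-hand side of \eqref{l4-2-1}.

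For \eqref{l4-2-2}, I would apply $\h_{A_j}$ to the above identity, producing four groups of terms: (i) $\S(\nabla^2 h)(U_s^{t,u(x)})\bigl(R_s^{t,x,j},R_s^{t,x,i}\bigr)$; (ii) $\S(\nabla h)$ evaluated on a second-order derivative flow $R_s^{(2),t,x,i,j}$ solving a linear SDE analogous to \eqref{l4-3-1a} with inhomogeneities built from $\nabla w$, $\nabla^2 w$, $\nabla\ric^\sharp$, and curvature contracted against $R\otimes R$; (iii) $\Theta_s^{t,x,j}\Theta_s^{t,x,i}\cdot\S(h)(U_s^{t,u(x)})$ together with analogous second-order vertical contributions; (iv) cross terms $\Theta_s^{t,x,j}\cdot\S\bigl(\nabla_{R_s^{t,x,i}}h\bigr)(U_s^{t,u(x)})$ and their symmetric counterparts. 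Each group is estimated by H\"older's inequality, which distributes the $p$-th power between the stochastic factors, and then combined with Lemma \ref{l4-3}, the $\Theta$-moment estimate derived above, and Lemma \ref{l4-1}. The four summands in the bracket of \eqref{l4-2-2} match one-to-one with (i)--(iv): $e^{C_8\widetilde{\mathbf{C}}(T)}$ from (i); $e^{C_8\mathbf{C}(2p,T)}$ from (iii), since the two $\Theta$-factors must each be placed in $L^{2p}$; $\mathbf{C}(p,T)\,e^{C_8(\mathbf{C}(p,T)+\mathbf{C}(2p,T))}$ from (iv); and $K_T(w)^pT^p\,e^{C_8(\widetilde{\mathbf{C}}(T)+\mathbf{C}(p,T))}$ from (ii), the explicit $K_T(w)T$ prefactor on $R^{(2)}$ arising from the $\nabla w$-driven drift integrated over $[t,T]$.

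The hardest part will be deriving the SDEs satisfied by the vertical process $\Theta$ and by the second-order derivative flow $R^{(2)}$, and tracking how the powers of $\nu^{1/2}$, $\nu$, and $K_T(w)$ propagate through the exponential moments so that the final constants align exactly with $\mathbf{C}(p,T)$, $\mathbf{C}(2p,T)$, and $\widetilde{\mathbf{C}}(T)$ from \eqref{l4-2-0}. Throughout I would freely use the Sobolev embedding $W^{2,p}(\Gamma(TM))\hookrightarrow C_b^{1}(\Gamma(TM))$ --- valid since $p>d$ and $M$ is compact --- to convert $K_T(w)=\sup_{s\in[0,T]}\|w(s)\|_{2,p}$ into uniform $L^\infty$-control of $w$ and $\nabla w$, in the same spirit as in the proof of Lemma \ref{l4-3}.
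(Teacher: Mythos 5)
Your strategy is sound and, at the level of ingredients, parallel to the paper's: chain rule for the first and second horizontal derivatives of $u\mapsto \S(h)(U_s^{t,\cdot})$, linear variation SDEs estimated by Gronwall, the conditional moment bound of Lemma \ref{l4-3} for the base derivative flow $R_s^{t,x,i}$, and Lemma \ref{l4-1} to convert $\int_M|\nabla h(X_s^{t,x})|^p\mu(dx)$, $\int_M|\nabla^2 h(X_s^{t,x})|^p\mu(dx)$ and $\int_M|\nabla^2 w(r,X_r^{t,x})|^p\mu(dx)$ into $\|h\|_{1,p}^p$, $\|h\|_{2,p}^p$ and $K_T(w)^p$. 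Where you genuinely differ is the bookkeeping: the paper does not use the intrinsic horizontal/vertical splitting of $TU_s^{t,\cdot}(\h_{A_i})$ with an $\mathfrak{so}(d)$-valued vertical process $\Theta$; instead it embeds $\F(M)$ into an ambient $\R^L$ (as in the proof of Theorem \ref{t2-2}), works with the Euclidean derivative flows $V_s^{t,u,i}=DU_s^{t,\cdot}(\h_{A_i}(u))$ and $W_s^{t,u,i,j}=D^2U_s^{t,\cdot}(\h_{A_i},\h_{A_j})$ satisfying \eqref{l4-2-3}--\eqref{l4-2-4}, and uses pointwise chart estimates for $D\S(h)$ and $D^2\S(h)$ in which $\nabla^2h$ and $\nabla^2 w$ are contracted only against $R\otimes R$. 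Your intrinsic route avoids the extension step and makes the geometric structure more transparent, at the price of having to derive the coupled SDEs for $\Theta$ and for the second-order base flow $R^{(2)}$ from scratch; the paper's route gets these variation equations for free from standard Euclidean flow theory. Your exact attribution of the four summands in \eqref{l4-2-2} to groups (i)--(iv), and the claim that the $\Theta$-moments are bounded by $e^{C_8\mathbf{C}(p,T)}$, will not come out literally in that form (e.g.\ the $\Theta$-equation is driven by curvature contracted with $R$, so its moments inherit an $e^{C\widetilde{\mathbf{C}}(T)}$ factor, and in the paper the summand $\mathbf{C}(p,T)e^{C(\mathbf{C}(p,T)+\mathbf{C}(2p,T))}$ originates in the $D^2\h_{A_m}(V^i,V^j)$ inhomogeneity of the second-order flow, not in cross terms), but such mismatches are absorbable into the stated constants.

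One point needs real care when you execute the plan: $\nabla^2 w$ is controlled only in $L^p$ (Sobolev embedding with $p>d$ gives $L^\infty$ control of $w$ and $\nabla w$ only), so the $\nabla^2 w$-inhomogeneity in your $R^{(2)}$-equation cannot be bounded in sup norm and attributed to a ``$\nabla w$-driven drift''; as in the paper's treatment of \eqref{l4-2-4}, it must be kept inside the $\mu(dx)$-integral, with the accompanying $R\otimes R$ factors removed by conditioning on $\mathscr{G}_T^{t,u}$ via Lemma \ref{l4-3} and the remaining $\E\bigl[|\nabla^2 w(r,X_r^{t,x})|^p\bigr]$ handled by Lemma \ref{l4-1}. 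This forces the second-order estimate to be proved in the integrated form $\sup_{t,s}\int_M\E[|R^{(2)}|^p]\mu(dx)$ rather than $\sup_x\E[\cdot]$, and it is exactly this maneuver that produces the fourth summand $K_T(w)^pT^p e^{C_8(\widetilde{\mathbf{C}}(T)+\mathbf{C}(p,T))}$. Also note that $\Theta$ and $R^{(2)}$ are driven by the original Brownian motion $B$, so you should not expect conditional bounds given $\mathscr{G}_T^{t,u}$ for them of the kind Lemma \ref{l4-3} provides for $R$ (which relies on the auxiliary noise $\beta$ independent of $\mathscr{G}_T^{t,u}$); take unconditional moments there, as the paper does for $V$ and $W$.
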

\begin{proof}
For simplicity we will only prove \eqref{l4-2-2}, the proof for \eqref{l4-2-1} is similar and simpler.

Recall that we define $X_s^{t,x}:=\pi\left(U_s^{t,u}\right)$ for every $u\in \F(M)$ with $\pi(u)=x$.
As explained in the proof of Theorem \ref{t2-2}, we can extend the vector field $\h_{A_i}\in T\F(M)$ as well as the function
$\S(h):\F(M)\to T_0^{1,0}\R$ to ambient Euclidean space $\R^L$ (for simplicity of notation we still use $\h_{A_i}$ and $\S(h)$ to
denote associated terms extended to $\R^L$)
and view $U_{s}^{t,u}$ as a solution of an $\R^L$-valued SDE which is still contained in $\F(M)\subset \R^L$.

Let $V_s^{t,u,i}:=DU_s^{t,\cdot}(\h_{A_i}(u))$, $W_s^{t,u,i,j}:=D^2U_s^{t,\cdot}\big(\h_{A_i}(u),\h_{A_j}(u)\big)$,
$R_s^{t,x,i}:=DX_s^{t,\cdot}(A_i(x))$, where
$DU_s^{t,\cdot}: T\R^L\to T\R^L$ and $D^2U_s^{t,\cdot}: T\R^L\times T\R^L\to T\R^L$
denote the first and second order
differential  for
$U_s^{t,\cdot}:\R^L\to \R^L$ respectively
(note that here for simplicity of calculation we use the standard differential in ambient space $\R^L$, not the covariant differential on $\F(M)$).
We have for every $u\in T\F(M)$,
\begin{equation}\label{l4-2-1a}
\h_{A_i}\left(\S(h)\left(U_s^{t,\cdot}\right)\right)(u)=DU_s^{t,\cdot}(\h_{A_i})\left(\S(h)\right)(U_s^{t,u})=D\S(h)(U_s^{t,u})\left(V_s^{t,u,i}\right),
\end{equation}
\begin{equation}\label{l4-2-2a}
\begin{split}
& \quad \left|\h_{A_i}\h_{A_j}\left(\S(h)\left(U_s^{t,\cdot}\right)\right)(u)\right|=\left|
DU_s^{t,\cdot}(\h_{A_i})\Big(DU_s^{t,\cdot}(\h_{A_j})\left(\S(h)\right)\Big)(U_s^{t,u})\right|\\
&=\left|D^2 \S(h)(U_s^{t,u})\left(V_s^{t,u,i}, V_s^{t,u,j}\right)+
D\S(h)(U_s^{t,u})\left(W_s^{t,u,i,j}\right)\right|\\
&\le c_1
\Big(|V_s^{t,u,i}|\cdot|V_s^{t,u,j}|+|R_s^{t,\pi(u),i}|\cdot|V_s^{t,u,j}|+
|R_s^{t,\pi(u),j}|\cdot|V_s^{t,u,i}|+|W_s^{t,u,i,j}|\Big)\\
&\cdot\Big(\left|h\left(X_s^{t,\pi(u)}\right)\right|+\left|\nabla h\left(X_s^{t,\pi(u)}\right)\right|\Big) +c_2|R_s^{t,\pi(u),i}|\cdot |R_s^{t,\pi(u),j}| \cdot \left|\nabla^2 h
\left(X_s^{t,\pi(u)}\right)\right|\\
&\le c_3\Big(|V_s^{t,u,i}|\cdot|V_s^{t,u,j}|+|R_s^{t,\pi(u),i}|\cdot|V_s^{t,u,j}|+
|R_s^{t,\pi(u),j}|\cdot|V_s^{t,u,i}|+|W_s^{t,u,i,j}|\Big)\cdot \|h\|_{2,p}\\
&\quad +c_2|R_s^{t,\pi(u),i}|\cdot |R_s^{t,\pi(u),j}| \cdot \left|\nabla^2 h\left(X_s^{t,\pi(u)}\right)\right|.
\end{split}
\end{equation}
In the first inequality above we have applied
the estimate (which can be verified in each local chart)
\begin{align*}
&\quad \left|D^2 \S(h)\left(V_s^{t,u,i}, V_s^{t,u,j}\right)(U_s^{t,u})\right|\\
&\le c_4|R_s^{t,\pi(u),i}|\cdot |R_s^{t,\pi(u),j}| \cdot \left|\nabla^2 h\left(X_s^{t,\pi(u)}\right)\right|+
c_4|V_s^{t,u,i}|\cdot |V_s^{t,u,j}| \cdot \left|h\left(X_s^{t,\pi(u)}\right)\right|\\
&+c_4
\left(|R_s^{t,\pi(u),i}|\cdot |V_s^{t,u,j}| +|R_s^{t,\pi(u),j}|\cdot |V_s^{t,u,i}|\right)\left|\nabla h\left(X_s^{t,\pi(u)}\right)\right|,
\end{align*}
and the last step is due to Sobolev embedding theorem.

Actually, it is well known that $V_s^{t,u,i}:=DU_s^{t,\cdot}(\h_{A_i}(u))$ and $W_s^{t,u,i,j}:=
D^2U_s^{t,\cdot}\left(\h_{A_i}(u),\h_{A_j}(u)\right)$ satisfy the linear SDEs  (which could be viewed as equations in $\R^L$) as follows
(see e.g. \cite[Theorem E8]{E1} or \cite{L}),
\begin{equation}\label{l4-2-3}
\begin{cases}
&dV_s^{t,u,i}=\sqrt{2\nu}\sum_{m=1}^k D\h_{A_m}\left(U_s^{t,u}\right)V_s^{t,u,i}\circ dB_s^m+D\h_{w(s,\cdot)}
\left(U_s^{t,u}\right)V_s^{t,u,i}ds\\
& V_t^{t,u,i}=\h_{A_i}(u).
\end{cases}
\end{equation}
\begin{equation}\label{l4-2-4}
\begin{cases}
&dW_s^{t,u,i,j}=\sqrt{2\nu}\sum_{m=1}^k \left(D\h_{A_m}\left(U_s^{t,u}\right)W_s^{t,u,i,j}+
D^2 \h_{A_m}\left(U_s^{t,u}\right)\left(V_s^{t,u,i},V_s^{t,u,j}\right)\right)\circ dB_s^m\\
&\quad\quad\quad +\left(D\h_{w(s,\cdot)}\left(U_s^{t,u}\right)W_s^{t,u,i,j}+
D^2 \h_{w(s,\cdot)}\left(U_s^{t,u}\right)\left(V_s^{t,u,i},V_s^{t,u,j}\right)\right)ds\\
& W_t^{t,u,i,j}=0.
\end{cases}
\end{equation}

Note that $M$ is compact (so there exists a finite number of local charts), combining the estimates in all
different local charts it is not difficult to verify that
\begin{align*}
&\sup_{u\in \F(M)}\left(|\mathbf{H}_{A_m}|+|D\h_{A_m}(u)|+|D^2 \h_{A_m}(u)|+|D^3 \h_{A_m}(u)|\right)\le c_1,
\end{align*}
and, for every $u\in \F(M)$,
\begin{align*}
&|D \h_{w(s)}(u)|\le
c_1 \left(|w(s,\pi(u))|+|\nabla w(s,\pi(u))|\right),\\
& |D^2 \h_{w(s)}(u)|\le c_1 \left(|w(s,\pi(u))|+|\nabla w(s,\pi(u))|+|\nabla^2 w(s,\pi(u))|\right).
\end{align*}
Applying these estimates to the linear SDE \eqref{l4-2-3} and using Grownwall's lemma, we obtain that,
for every $q\ge 2$, $0\le t \le s\le T$,
\begin{equation}\label{l4-2-5}
\begin{split}
\sup_{u\in \F(M)}\E\left[\left|V_s^{t,u,i}\right|^q\right]&\le c_2\exp\Big(c_4\big(\nu^{\frac{q}{2}}T^{\frac{q}{2}}+\nu^qT^q +  \sup_{s\in[0,T]}\|D\mathbf{H}_{w(s)} \|^q_\infty T^q\big)\Big)\\
&\le  c_2\exp\Big(c_4\big(\nu^{\frac{q}{2}}T^{\frac{q}{2}}+\nu^qT^q +K_T(w)^qT^q\big)\Big)
=c_2e^{c_4\mathbf{C}(q,T)}
\end{split}
\end{equation}
where in the second step we have applied  the Sobolev embedding theorem since $p>d$.
On the other hand, by equation \eqref{l4-2-4} for any $q\ge 2,\  0\le t\le s\le T$, we have
\begin{align*}
&\quad \E\left[\left|W_s^{t,u,i,j}\right|^q\right]\\
&\le
c_5\Bigg(\nu^{\frac{q}{2}}\E\left[\left|\sum_{m=1}^k\int_t^s D\h_{A_m}\left(U_r^{t,u}\right)W_r^{t,u,i,j}\circ dB^m_r\right|^q\right]
+\E\left[\left|\int_t^s D\h_{w(r)}\left(U_r^{t,u}\right)W_r^{t,u,i,j} dr\right|^q\right]\\
&+\nu^{\frac{q}{2}}\E\left[\left|\int_t^s \sum_{m=1}^k D^2 \h_{A_m}\left(U_r^{t,u}\right)\left(V_r^{t,u,i},V_r^{t,u,j}\right)\circ dB^m_r\right|^q\right]
+\E\left[\left|\int_t^s D^2 \h_{w(r)}\left(U_r^{t,u}\right)\left(V_r^{t,u,i},V_r^{t,u,j}\right)dr\right|^q\right]\Bigg)\\
&\le c_6\left( \nu^{\frac{q}{2}}T^{\frac{q}{2}-1}
+T^{q-1}\sup_{s\in[0,T]}\|D\mathbf{H}_{w(s)}\|^q_\infty \right)
\int_t^s\E\left[|W_r^{t,u,i,j}|^q dr\right]\\
&+c_6\left(\nu^{\frac{q}{2}}T^{\frac{q}{2}-1}
+K_T(w)^q T^{q-1}\right)\int_0^T\E\left[\left|V_r^{t,u,i}\right|^{2q}\right]^{1/2}\E\left[\left|V_r^{t,u,j}\right|^{2q}\right]^{1/2}dr\\
&+c_6T^{q-1}\int^T_0 \mathbb{E}\left[ | \nabla^2 w(r,X^{t,\pi(u)}_r) |^q \mathbb{E}\left[ |R^{t,\pi(u),i}_r|^{2q}|\mathscr{G}^{t,u}_T \right]^\frac{1}{2}  \mathbb{E}\left[ |R^{t,\pi(u),j}_r|^{2q} |\mathscr{G}^{t,u}_T\right]^\frac{1}{2} \right]\,dr\\
&\le\frac{c_6\mathbf{C}(q,T)}{T}\int_t^s\E\left[|W_r^{t,u,i,j}|^q\right]dr+c_7\mathbf{C}(q,T)e^{c_7\mathbf{C}(2q,T)}
+c_8e^{c_7\widetilde{\mathbf{C}}(T)}T^{q-1}\int^T_0 \mathbb{E}\left[ |\nabla^2 w(r,X_r^{t,\pi(u)})|^q\,dr \right]
\end{align*}
Here the second inequality follows from
Burkholder-Davis-Gundy's inequality, H\"older's inequality, as well as the following estimate
\begin{equation}
    \begin{aligned}
         & \mathbb{E}\left[\left|\int^s_t D^2\mathbf{H}_{w(r)}(U^{t,u}_r)(V^{t,u,i}_r,V^{t,u,j}_r)\,dr\right|^q\right]\\
         \le& \ c_9T^{q-1}\int^s_t\mathbb{E}\Big[\Big(|w(r,X_r^{t,\pi(u)})|+|\nabla w(r,X_r^{t,\pi(u)})|\Big)^q|V^{t,u,i}_r|^q|V^{t,u,j}_r|^q\\
         &\quad \quad \quad +\ |\nabla^2 w(r,X_r^{t,\pi(u)})|^q|R^{t,\pi(u),i}_r|^q|R^{t,\pi(u),j}_r|^q \Big]\,dr\\
         \le& c_{10}T^{q-1} K_T(w)^q\int^T_0 \E\left[\left|V_r^{t,u,i}\right|^{2q}\right]^{1/2}\E\left[\left|V_r^{t,u,j}\right|^{2q}\right]^{1/2}\,dr\\
         +& c_{10}T^{q-1}\int^T_0 \mathbb{E}\left[ |\nabla^2 w(r,X_r^{t,\pi(u)})|^q\mathbb{E}\left[ |R^{t,\pi(u),i}_r|^{2q}|\mathscr{G}^{t,u}_T \right]^\frac{1}{2}  \mathbb{E}\left[ |R^{t,\pi(u),j}_r|^{2q} |\mathscr{G}^{t,u}_T\right]^\frac{1}{2} \right]\,dr.\\
    \end{aligned}
\end{equation}
The last step is due to
\eqref{l4-2-5} and Lemma \ref{l4-3}. Hence, applying Gronwall's inequality, we obtain
\begin{align*}
&\quad \sup_{0\le t\le s\le T}
\E\left[\left|W_s^{t,u,i,j}\right|^q\right]\\
&\le c_{11}e^{c_{11}\mathbf{C}(q,T)}\left(e^{c_{11}\widetilde{\mathbf{C}}(T)}T^{q-1}\int^T_0\mathbb{E}\left[ \left| \nabla^2 w(r,X^{t,\pi(u)}_r) \right|^q \right]\,dr+
\mathbf{C}(q,T)e^{c_{11}\mathbf{C}(2q,T)}\right) .
\end{align*}
Therefore by \eqref{l4-1-2} we have
\begin{equation}\label{l4-2-6}
\begin{split}
&\quad \sup_{0\le t\le s\le T}
\int_M\E\left[\left|W_s^{t,u(x),i,j}\right|^p\right]\mu(dx)
\le c_{12}\left(e^{c_{12}\widetilde{\mathbf{C}}(T)}T^{p}
K_T(w)^p+\mathbf{C}(p,T)e^{c_{12}\mathbf{C}(2p,T)}\right) e^{c_{12}\mathbf{C}(p,T)}.
\end{split}
\end{equation}
Based on \eqref{l4-2-2a} we deduce that
\begin{align*}
&\quad \E\left[\int_M\left|\h_{A_i}\h_{A_j}\left(\S(h)\left(U_s^{t,\cdot}\right)\right)(u(x))\right|^p\mu(dx)\right]\\
&\le c_{13}\|h\|_{2,p}^p\cdot\int_M\E\left[\Big||V_s^{t,u(x),i}|\cdot|V_s^{t,u(x),j}|+|R_s^{t,x,i}|\cdot|V_s^{t,u(x),j}|+
|R_s^{t,x,j}|\cdot|V_s^{t,u(x),i}|+|W_s^{t,u(x),i,j}|\Big|^p\right]\mu(dx)\\
&+c_{13}\int_M\E\left[|R_s^{t,x,i}|^p\cdot |R_s^{t,x,j}|^p \cdot \left|\nabla^2 h(X_s^{t,x})\right|^p\right]\mu(dx)\\
&\le c_{14} \|h\|_{2,p}^p\cdot\left(\sup_{1\le i \le k}\sup_{x\in M}\left(\E\left[|V_s^{t,u(x),i}|^{2p}\right]+
\E\left[|R_s^{t,x,i}|^{2p}\right]\right)+\int_M\E\left[\left|W_s^{t,u(x),i,j}\right|^p\right]\mu(dx)\right)\\
&+c_{14}\sup_{1\le i \le k}\sup_{x\in M}\E\left[|R_s^{t,x,i}|^{2p}|\mathscr{G}_T^{t,u(x)}\right]
\cdot\int_M \E\left[\left|\nabla^2 h(X_s^{t,x})\right|^p\right]\mu(dx)\\
&\le c_{15}\|h\|^p_{2,p}\Big( e^{c_{15}\mathbf{C}(2p,T)} +e^{c_{15}\widetilde{\mathbf{C}}(T)}\Big)
+c_{15}\|h\|^p_{2,p}\left(e^{c_{15}\widetilde{\mathbf{C}}(T)}T^{p}K_T(w)^p+\mathbf{C}(p,T)e^{c_{15}\mathbf{C}(2p,T)}\right) e^{c_{15}\mathbf{C}(p,T)}\\
&\le c_{16}\|h\|^p_{2,p}\Big(e^{c_{16}\widetilde{\mathbf{C}}(T)}+e^{c_{16}\mathbf{C}(2p,T)}+\mathbf{C}(p,T)e^{c_{16}\big(\mathbf{C}(p,T)+
\mathbf{C}(2p,T)\big)}+K_T(w)^pT^p  e^{c_{16}\big(\widetilde{\mathbf{C}}(T)+\mathbf{C}(p,T)\big)}\Big),
\end{align*}
where the second inequality follows from H\"older's inequality and the last step is due to
\eqref{l4-3-1}, \eqref{l4-2-5}, \eqref{l4-2-6}. So we have proved \eqref{l4-2-2}.

\end{proof}

\begin{lemma}\label{l4-4}
Let $T>0$ and  $w_1,w_2\in \C_T^\infty$. Let $\{U_{s,1}^{t,u}\}_{0\le t\le s\le T}$,
$\{U_{s,2}^{t,u}\}_{0\le t\le s\le T}$  be solutions of the
$\F(M)$-valued SDE \eqref{l4-1-1} with $w$ replaced by $w_1$, $w_2$ respectively.
Define $X_{s,i}^{t,x}:=\pi(U_{s,i}^{t,u})$ for every $u\in \F(M)$ with $\pi(u)=x$ and $i=1,2$.
Then, for all $p>d$, there exists a positive constant
$C_9$ which is independent of $w_1$, $w_2$, $\nu$ and $T$ and such that, for every $f\in C_b^1(M)$, $h\in C_b^2(\Gamma(TM))$,
\begin{equation}\label{l4-4-1}
\begin{split}
\sup_{0\le t\le s\le T}\sup_{x\in M}\E\left[ \left|f\left(X_{s,1}^{t,x}\right)-
f\left(X_{s,2}^{t,x}\right)\right|^p\right]\le C_9 Te^{C_9\CC_0(T)}\|\nabla f\|_\infty^p \cdot \sup_{s\in [0,T]}\|w_1(s)-w_2(s)\|_{1,p}^p,
\end{split}
\end{equation}
\begin{equation}\label{l4-4-0}
\begin{split}
& \quad \sup_{0\le t\le s\le T} \sup_{u\in \F(M)}\E\left[\left|\S(h)\left(U_{s,1}^{t,u}\right)-
\S(h)\left(U_{s,2}^{t,u}\right)\right|^p\right]\\
&\le C_9\|h\|_{2,p}^pT^pe^{C_9\mathbf{C}_0(p,T)}\cdot \sup_{s\in [0,T]}\|w_1(s)-w_2(s)\|^p_{1,p},
\end{split}
\end{equation}
\begin{equation}\label{l4-4-2}
\begin{split}
&\quad \sup_{0\le t\le s\le T} \E\left[\int_M \left|\h_{A_j}\left(\S(h)\left(U_{s,1}^{t,\cdot}\right)\right)(u(x))-
\h_{A_j}\left(\S(h)\left(U_{s,2}^{t,\cdot}\right)\right)(u(x))\right|^p \mu(dx)\right]\\
&\le C_9\|h\|^p_{2,p}e^{C_{9}\mathbf{C}_0(p,T)}\Bigl( \big(\mathbf{C}_0(p,T)+1\big)T^pe^{C_{9}\mathbf{C}_0(2p,T)}+T^pe^{C_{9}\mathbf{C}_0(p,T)}\\
&\quad+e^{C_{9}\widetilde{\mathbf{C}}_0(T)}
\left(T^p+T^{\frac{1}{2}}+T^{p+\frac{1}{2}}K_T(w_1,w_2)^p\right)\Bigr)\cdot \sup_{s\in [0,T]}\|w_1(s)-w_2(s)\|^p_{1,p},
\end{split}
\end{equation}
where $x\mapsto u(x)$ is a continuous map from $M$ to $\F(M)$ which satisfies $\pi(u(x))=x$ and
$\mathbf{C}_0(p,T)$, $\widetilde{\mathbf{C}}_0(T)$ are defined as in \eqref{l4-2-0} with $K_T(w)$ replaced by
$K_T(w_1,w_2):=\sup_{s\in [0,T]}\left(\|w_1(s)\|_{2,p}+\|w_2(s)\|_{2,p}\right)$.
\end{lemma}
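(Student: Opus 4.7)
The strategy is to treat all three estimates in a unified framework by working in an ambient Euclidean space $\R^L$ via the Nash embedding of $\F(M)$, exactly as in the proofs of Theorem \ref{t2-2} and Lemma \ref{l4-2}, so that It\^o's formula and Gr\"onwall's inequality can be applied directly to SDEs for the differences of the relevant quantities. Throughout, the Sobolev embedding $W^{1,p}\hookrightarrow L^\infty$ (valid since $p>d$ and $M$ is compact) is used to pass between $\|\cdot\|_{1,p}$ and pointwise bounds, and Lemma \ref{l4-1} is used repeatedly to convert expectations of the form $\E\!\left[\int_M\varphi(X_{s,i}^{t,x})\mu(dx)\right]$ into $\|\varphi\|_1$-type bounds.

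For \eqref{l4-4-1}, I would write $d(U_{s,1}^{t,u}-U_{s,2}^{t,u})$ as an $\R^L$-valued SDE and decompose the drift difference as
\[
\h_{w_1(s)}(U_{s,1}^{t,u})-\h_{w_2(s)}(U_{s,2}^{t,u})=\bigl[\h_{w_1(s)}(U_{s,1}^{t,u})-\h_{w_1(s)}(U_{s,2}^{t,u})\bigr]+\h_{(w_1-w_2)(s)}(U_{s,2}^{t,u}),
\]
together with the analogous (purely Lipschitz) decomposition of the Stratonovich correction. Applying It\^o to $|U_{s,1}^{t,u}-U_{s,2}^{t,u}|^p$ and Gr\"onwall gives $\sup_{u}\E\!\left[|U_{s,1}^{t,u}-U_{s,2}^{t,u}|^p\right]\le cT e^{c\CC_0(T)}\sup_s\|w_1(s)-w_2(s)\|^p_{1,p}$, so \eqref{l4-4-1} follows from $|f(X_{s,1}^{t,x})-f(X_{s,2}^{t,x})|\le c\|\nabla f\|_\infty |U_{s,1}^{t,u(x)}-U_{s,2}^{t,u(x)}|$. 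For \eqref{l4-4-0}, one extends $h$ to $\bar h\in C^1(\R^L)$ with $\|\bar h\|_{C^1}\le c\|h\|_{2,p}$ via the Sobolev embedding, so that $|\S(h)(U_{s,1}^{t,u})-\S(h)(U_{s,2}^{t,u})|\le c\|h\|_{2,p}|U_{s,1}^{t,u}-U_{s,2}^{t,u}|$, and applies the $L^p$ bound just proved.

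The main obstacle is \eqref{l4-4-2}. Using identity \eqref{l4-2-1a}, the difference of horizontal derivatives splits as
\[
D\S(h)(U_{s,1}^{t,u(x)})\bigl[V_{s,1}^{t,u(x),j}-V_{s,2}^{t,u(x),j}\bigr]+\bigl[D\S(h)(U_{s,1}^{t,u(x)})-D\S(h)(U_{s,2}^{t,u(x)})\bigr]\bigl[V_{s,2}^{t,u(x),j}\bigr],
\]
where $V_{s,i}^{t,u,j}$ is the tangent map obeying \eqref{l4-2-3} with drift $\h_{w_i(s)}$. For the second piece, Taylor expansion of $D\S(h)$ along the segment joining $U_{s,2}^{t,u(x)}$ and $U_{s,1}^{t,u(x)}$ produces a bound in terms of products of $|\nabla^2 h(\cdot)|$, $|\nabla h(\cdot)|$, $|h(\cdot)|$ evaluated on the path, $|R_{s,i}^{t,x,j}|$, $|V_{s,i}^{t,u(x),j}|$, and $|U_{s,1}^{t,u(x)}-U_{s,2}^{t,u(x)}|$; by H\"older, Lemmas \ref{l4-2} and \ref{l4-3}, Lemma \ref{l4-1} (which turns $\E\!\left[\int_M|\nabla^2 h(X^{t,x}_{s,1})|^p\mu(dx)\right]$ into $e^{cT}\|h\|_{2,p}^p$), and \eqref{l4-4-1}, this contributes a term of the required form. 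For the first piece, $\Delta V_s:=V_{s,1}^{t,u,j}-V_{s,2}^{t,u,j}$ solves a linear SDE obtained by subtracting the two instances of \eqref{l4-2-3}, and each coefficient difference further splits into: a Lipschitz-in-$\Delta V_s$ part (which contributes to the Gr\"onwall exponent $\mathbf{C}_0(p,T)$); a \emph{discrepancy-in-$U$} part such as $[D\h_{A_m}(U_{s,1})-D\h_{A_m}(U_{s,2})]V_{s,2}^j$, controlled by the $L^p$ bound on $U_{s,1}-U_{s,2}$ proved above together with the moments of $V_{s,2}^j$ from Lemma \ref{l4-2}; and a \emph{discrepancy-in-drift} part $D\h_{(w_1-w_2)(s)}(U_{s,2})V_{s,2}^j$ whose pointwise estimate involves $|w_1-w_2|+|\nabla(w_1-w_2)|$ at $X_{s,2}^{t,x}$, converted into $\|w_1-w_2\|_{1,p}$ via Lemma \ref{l4-1}. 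The hardest bookkeeping is the quadratic-variation contribution of the Stratonovich correction in the SDE for $\Delta V_s$, which is where the mixed exponent $\mathbf{C}_0(2p,T)$ and the factor $T^{p+\frac12}K_T(w_1,w_2)^p$ in \eqref{l4-4-2} arise; a final Gr\"onwall assembly then delivers \eqref{l4-4-2} with the stated constants.
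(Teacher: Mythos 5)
Your treatment of \eqref{l4-4-1} and \eqref{l4-4-0} (direct SDE for the difference $U_{s,1}^{t,u}-U_{s,2}^{t,u}$ in the ambient space, Lipschitz-plus-source decomposition of the drift, It\^o and Gr\"onwall, then Sobolev embedding to bound $\h_{(w_1-w_2)(s)}$ by $\|w_1-w_2\|_{1,p}$) is workable and in fact more elementary than the paper's route; the paper instead interpolates, introducing $U^{t,u}_{s,r}$ with drift $(2-r)\h_{w_1(s)}+(r-1)\h_{w_2(s)}$ and writing every difference as $\int_1^2\partial_r(\cdot)\,dr$, and only sketches these two estimates as "simpler".

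For \eqref{l4-4-2}, however, your argument has a genuine gap at the second piece of your decomposition, $\bigl[D\S(h)(U_{s,1})-D\S(h)(U_{s,2})\bigr]V_{s,2}^{t,u,j}$. A mean-value/Taylor bound places $D^2\S(h)$ (hence $\nabla^2 h$) at points of the chord joining $U_{s,1}^{t,u(x)}$ and $U_{s,2}^{t,u(x)}$ in $\R^L$, which lie on neither flow, so the Jacobian estimate of Lemma \ref{l4-1} cannot convert the $\mu(dx)$-integral of that factor into $\|h\|_{2,p}^p$; the only available pointwise bound is $\|h\|_{C^2}$, which is not what the lemma claims and would break the fixed-point scheme of Lemma \ref{l4-5}, where $h$ is later replaced by $F_{w(s)}$, controlled only in $W^{2,p}$. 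Even if you force the second derivative to be evaluated at $X_{s,1}^{t,x}$ or $X_{s,2}^{t,x}$, the companion factor is $|U_{s,1}-U_{s,2}|\cdot|V_{s,2}^{t,u,j}|$, and both of these are functionals of the driving Brownian motion $B$: there is no sub-$\sigma$-algebra with respect to which they admit deterministic conditional moment bounds, so you cannot decouple them from $|\nabla^2 h(X)|$ while keeping the latter in exactly $L^p$ (any H\"older exponent strictly larger than $1$ on the $\nabla^2 h$ factor requires $\|h\|_{2,p\alpha}$). This decoupling is precisely what the paper's machinery provides: in the interpolated picture every occurrence of $\nabla^2 h$ and $\nabla^2 w_i$ is evaluated along the single flow $X^{t,x}_{s,r}$ and multiplies only the projected derivative processes $R^{t,x}_{s,r}$ and $Q^{t,x,j}_{s,r}$, which by the Elworthy--Le Jan--Li noise decomposition satisfy covariant SDEs driven by a Brownian motion independent of $\mathscr{G}^{t,u}_{T,r}$ and hence admit \emph{deterministic} conditional bounds, most importantly \eqref{l4-4-6}, $\E\bigl[|R^{t,x}_{s,r}|^q\big|\mathscr{G}^{t,u(x)}_{T,r}\bigr]\le cTe^{c\CC_0(T)}\sup_s\|w_1(s)-w_2(s)\|_\infty^q$, which is the mechanism injecting the smallness $\|w_1-w_2\|_{1,p}$ into the $\nabla^2 h$ term with the correct power. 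In your scheme the smallness of the second piece is carried by $|U_{s,1}-U_{s,2}|$, which has no such conditional bound, so the final assembly cannot reach the stated estimate; the same decoupling issue reappears, less dramatically, in your first piece when $\nabla(w_1-w_2)$ and $\nabla^2 w_i$ (only $L^p$-controlled in the norms allowed) are evaluated along the flow against $V$- and $W$-type factors. To repair the proof one essentially has to adopt the paper's interpolation in $r$ together with the processes $V_{s,r}$, $W_{s,r}$, $\Gamma_{s,r}$, $R_{s,r}$, $Q_{s,r}$ and their conditional estimates, rather than differencing the two flows directly.
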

\begin{proof}
For simplicity  we will only prove \eqref{l4-4-2}. Estimates \eqref{l4-4-1} and \eqref{l4-4-0} can be proved with similar (but simpler) arguments.

{\bf Step (i)}
For every $r\in [1,2]$, suppose that $\{U_{s,r}^{t,u}\}_{0\le t\le s\le T}$ is the solution of following
$\F(M)$-valued SDE,
\begin{equation*}
\begin{cases}
& dU_{s,r}^{t,u}=\sqrt{2\nu}\sum_{i=1}^k \h_{A_i}\left(U_{s,r}^{t,u}\right)\circ dB_s^i+
(2-r)\h_{w_1(s)}\left(U_{s,r}^{t,u}\right)ds+
(r-1)\h_{w_2(s)}\left(U_{s,r}^{t,u}\right)ds\\
& U_{t,r}^{t,u}=u.
\end{cases}
\end{equation*}
Let $X_{s,r}^{t,x}:=\pi(U_{s,r}^{t,u})$ for any $u\in \F(M)$ with $\pi(u)=x$. Define also  $V_{s,r}^{t,u}:=\frac{\partial U_{s,r}^{t,u}}{\partial r}$, $W_{s,r}^{t,u,j}:=DU_{s,r}^{t,\cdot}(\h_{A_j})$,
$\Gamma_{s,r}^{t,u,j}:=\frac{\partial W_{s,r}^{t,u,j}}{\partial r}$. Then, repeating  the same procedures as for the proof of \eqref{l4-1-2}
and \eqref{l4-2-5}, we can
deduce that
\begin{equation}\label{l4-4-1a}
\sup_{r\in [1,2]}\sup_{0\le t\le s \le T}\E\left[\int_M |f(X_{s,r}^{t,x})|\mu(dx)\right]\le e^{c_0\CC_0(T)}\|f\|_{1},\ \ \forall\ f\in C^1(M).
\end{equation}
\begin{equation}\label{l4-4-2a}
\sup_{r\in [1,2];u\in \F(M)}\sup_{0\le t\le s \le T}\E\left[\left|W_{s,r}^{t,u,j}\right|^q\right]\le
c_0 e^{c_1\mathbf{C}_0(q,T)}. 
\end{equation}
Since $M$ is compact, we can exchange the order of differential and integral in the equation above to derive that $V_{s,r}^{t,u}$ and $\Gamma_{s,r}^{t,u,j}$ satisfy the following equations (which are equations in ambient space $\R^L$) respectively,
\begin{equation}\label{l4-4-3}
\begin{cases}
&dV_{s,r}^{t,u}=\sqrt{2\nu}\sum_{i=1}^k D\h_{A_i}\left(U_{s,r}^{t,u}\right)V_{s,r}^{t,u}\circ dB_s^i+
\big((2-r)D\h_{w_1(s)}\left(U_{s,r}^{t,u}\right)\\
&+(r-1)D\h_{w_2(s)}\left(U_{s,r}^{t,u}\right)\big)V_{s,r}^{t,u}ds
+\left(\h_{w_2(s)}\left(U_{s,r}^{t,u}\right)-\h_{w_1(s,\cdot)}\left(U_{s,r}^{t,u}\right)\right)ds,\\
&V_{t,r}^{t,u}=0.
\end{cases}
\end{equation}
\begin{equation}\label{l4-4-3a}
\begin{cases}
&d\Gamma_{s,r}^{t,u,j}=\sqrt{2\nu}\sum_{i=1}^k \left(D\h_{A_i}\left(U_{s,r}^{t,u}\right)\Gamma_{s,r}^{t,u,j}+D^2 \h_{A_i}\left(U_{s,r}^{t,u}\right)
\left(V_{s,r}^{t,u}, W_{s,r}^{t,u,j}\right)\right)\circ dB_s^i\\
&+\big((2-r)D\h_{w_1(s)}\left(U_{s,r}^{t,u}\right)
+(r-1)D\h_{w_2(s)}\left(U_{s,r}^{t,u}\right)\big)\Gamma_{s,r}^{t,u,j}ds\\
&+\big((2-r)D^2\h_{w_1(s)}\left(U_{s,r}^{t,u}\right)
+(r-1)D^2\h_{w_2(s)}\left(U_{s,r}^{t,u}\right)\big)\left(V_{s,r}^{t,u}, W_{s,r}^{t,u,j}\right)ds\\
&+\left(D\h_{w_2(s)}\left(U_{s,r}^{t,u}\right)-D\h_{w_1(s)}\left(U_{s,r}^{t,u}\right)\right)W_{s,r}^{t,u,j}ds,\\
&\Gamma_{t,r}^{t,u,j}=0.
\end{cases}
\end{equation}

As explained in the proof of Lemma \ref{l4-2}, using the expression in local charts and applying Sobolev's embedding theorem
(note that $p>d$), we obtain
\begin{align*}
&\sup_{u\in \F(M)}\left(|\h_{A_i}(u)|+|D\h_{A_i}(u)|+|D^2 \h_{A_i}(u)|+|D^3 \h_{A_i}(u)|\right)\le c_1,\ \forall\ 1\le i \le k,
\end{align*}
\begin{align*}
&\quad \sup_{u\in \F(M)}\left|D\h_{w_1(s)}(u)\right|+\left|D\h_{w_2(s)}(u)\right|\\
&\le c_1\sup_{x\in M}\left(|w_1(s,x)|+|w_2(s,x)|+
|\nabla w_1(s,x)|+|\nabla w_2(s,x)|\right)
\le c_2K_T(w_1,w_2),
\end{align*}
\begin{align*}
\quad \left|D^2\h_{w_1(s)}(u)\right|+\left|D^2\h_{w_2(s)}(u)\right|
&\le c_1\left(
\sum_{m=0}^2|\nabla^m w_1(s,\pi(u))|+\sum_{m=0}^2|\nabla^m w_2(s,\pi(u))|\right)\\
& \le c_2K_T(w_1,w_2)+c_1\left(\left|\nabla^2 w_1(s,\pi(u))\right|+\left|\nabla^2 w_2(s,\pi(u))\right|\right)
\end{align*}
\begin{align*}
\quad \sup_{u\in \F(M)}\left|\h_{w_1(s)}(u)-\h_{w_2(s)}(u)\right|
&\le c_1\sup_{s\in [0,T]}\|w_1(s)-w_2(s)\|_\infty\\
&\le c_2\sup_{s\in [0,T]}\|w_1(s)-w_2(s)\|_{1,p},
\end{align*}
\begin{align*}
&\quad \left|D\h_{w_1(s)}(u)-D\h_{w_2(s)}(u)\right|\\
&\le c_1\left(|w_1(s,\pi(u))-w_2(s,\pi(u))|+|\nabla w_1(s,\pi(u))-\nabla w_2(s,\pi(u))|\right)\\
&\le c_2\left(\sup_{s\in [0,T]}\|w_1(s)-w_2(s)\|_{1,p}+|\nabla w_1(s,\pi(u))-\nabla w_2(s,\pi(u))|\right).
\end{align*}
According to the estimates above and applying the same arguments as in the proof of Lemma \ref{l4-2} (in particular those for
estimates \eqref{l4-2-6}) to  linear SDEs \eqref{l4-4-3} and \eqref{l4-4-3a}
we get that, for every $q\ge 2$ and $1\le j \le k$,
\begin{equation}\label{l4-4-4}
\begin{split}
&\sup_{0\le t\le s \le T}\sup_{r\in [1,2];u\in \F(M)}\E\left[\left|V_{s,r}^{t,u}\right|^q\right]
\le c_3 T^qe^{c_4\mathbf{C}_0(q,T)}\sup_{s\in [0,T]}\|w_1(s)-w_2(s)\|_{1,p}^q,
\end{split}
\end{equation}
and for every $p>1, 1\le j\le k$,
\begin{align*}
&\quad\E\left[\left|\Gamma_{s,r}^{t,u(x),j}\right|^p\right]\le \frac{c_3\mathbf{C}_0(p,T)}{T}\int_t^s
\E\left[\left|\Gamma_{s',r}^{t,u(x),j}\right|^p\right]ds'\\
&+c_3\sup_{s\in [0,T]}\|w_1(s)-w_2(s)\|_{1,p}^p\cdot\Bigg(\mathbf{C}_0(p,T)T^pe^{c_4\mathbf{C}_0(2p,T)}+T^pe^{c_4\mathbf{C}_0(p,T)}
\\
&+e^{c_4\widetilde{\mathbf{C}}_0(T)}T^{p-\frac{1}{2}}\int_0^T \sup_{i=1,2}\E\left[\left|\nabla^2 w_i\left(s',X_{s',r}^{t,x}\right)\right|^p
\right]ds'\Bigg)\\
&+c_3e^{c_4\widetilde{\mathbf{C}}_0(T)}
T^{p-1}\int_0^T\E\left[\left|\nabla w_1\left(s',X_{s',r}^{t,x}\right)-\nabla w_2\left(s',X_{s',r}^{t,x}\right)\right|^p\right]ds',
\end{align*}
where we have also used estimates \eqref{l4-4-6a} and \eqref{l4-4-6} established below. Together with \eqref{l4-4-1a}
we deduce
\begin{equation}\label{l4-4-4a}
\begin{split}
&\quad \sup_{0\le t\le s\le T}\sup_{r\in [1,2];u\in \F(M)}
\int_M\E\left[\left|\Gamma_{s,r}^{t,u(x),j}\right|^p\right]\mu(dx)\\
&\le c_5 e^{c_6\mathbf{C}_0(p,T)}\Bigl(\mathbf{C}_0(p,T)T^pe^{c_6\mathbf{C}_0(2p,T)}+T^p e^{c_6\mathbf{C}_0(p,T)}+e^{c_6\widetilde{\mathbf{C}}_0(T)}
\bigl(T^p+T^{p+\frac{1}{2}}K_T(w_1,w_2)^p\bigr)\Bigr)
 \\
&\quad \cdot\sup_{s\in [0,T]}\|w_1(s)-w_2(s)\|_{1,p}^p.
\end{split}
\end{equation}

{\bf Step (ii)} Let $R_{s,r}^{t,x}=\frac{\partial}{\partial r}X_{s,r}^{t,x}$, $Q_{s,r}^{t,x,j}:=DX_{s,r}^{t,\cdot}(A_j)$ and $\mathscr{G}_{T,r}^{t,u}:=\sigma\{U_{s,r}^{t,u};t\le s\le T\}$. Following the same arguments as for \eqref{l4-3-1a}
(in particular the idea to decompose the filtration generated by the noise $\{B_s;s\in [0,T]\}$, see e.g. \cite[Theorem 3.1, 3.2]{ELL} or \cite[Chapter 3]{EL}
for details), we know that, for every $r\in [1,2]$, $R_{s,r}^{t,x}$ and $Q_{s,r}^{t,x,j}$ satisfy the following equations, respectively
\begin{equation*}
\begin{cases}
&\mathbb{D}_s R_{s,r}^{t,x}=\sum_{i=1}^k \sqrt{2\nu}\nabla_{R_s^{t,x}}A_i\left(X_{s,r}^{t,x}\right) d\tilde{\beta}^{i,r}_{s}-
\nu\text{Ric}^\sharp_{X_{s,r}^{t,x}}R_{s,r}^{t,x}ds\\
&+\nabla_{R_{s,r}^{t,x}}\left((2-r)w_1\left(s\right)+(r-1)w_2\left(s\right)\right)(X_{s,r}^{t,x})ds+
\left(w_2(s,X_{s,r}^{t,x})-w_1(s,X_{s,r}^{t,x})\right)ds
,\\
& R_{t,r}^{t,x}=0,
\end{cases}
\end{equation*}
\begin{equation*}
\begin{cases}
&\mathbb{D}_s Q_{s,r}^{t,x,j}=\sum_{i=1}^k \sqrt{2\nu}\nabla_{Q_{s,r}^{t,x,j}}A_i\left(X_{s,r}^{t,x}\right) d\tilde{\beta}^{i,r}_{s}-
\nu\text{Ric}^\sharp_{X_{s,r}^{t,x}}Q_{s,r}^{t,x,j}ds\\
&+\nabla_{Q_{s,r}^{t,x,j}}\left((2-r)w_1\left(s\right)+(r-1)w_2\left(s\right)\right)(X_{s,r}^{t,x})ds,\\
& Q_{t,r}^{t,x,j}=A_j(x),
\end{cases}
\end{equation*}
where $\mathbb{D}$ denotes the stochastic covariant differential along $X_{\cdot,r}^{t,x}$,
$d\tilde{\beta}^{r}_s:=//_{s,r}^{t,x}d\beta^{r}$ such that $//_{s,r}^{t,x}:\R^k \to \R^k$ is a $\mathscr{G}_{T,r}^{t,u(x)}$-measurable
metric adapted translation and
$\{\beta_{s}^r=(\beta_s^{1,r},\cdots, \beta_s^{k,r});s\in [t,T]\}$ is an $\R^k$-valued Brownian motion independent of $\mathscr{G}_{T,r}^{t,u(x)}$.

Based on the second equation above, we can repeat the computations in the proof of Lemma \ref{l4-3} to deduce that,
for every $q\ge 2$,
\begin{equation}\label{l4-4-6a}
\sup_{0\le t\le s \le T}\sup_{r\in [1,2],x\in M}\E\big[\big|Q_{s,r}^{t,x,j}\big|^q\big|\mathscr{G}_{T,r}^{t,u(x)}\big]\le
c_7e^{c_8\widetilde{\mathbf{C}}_0(T)}.
\end{equation}

On the other hand we can apply It\^o's formula to get, for every $q\ge 2$,
\begin{align*}
d|R_{s,r}^{t,x}|^q&=q\sum_{i=1}^k M_{s,r}^{t,i}|R_{s,r}^{t,x}|^qd \tilde{\beta}_s^{i,r}+\left(\frac{q}{2}N_{s,r}^t+\frac{q(q-2)}{2}\sum_{i=1}^k |M_{s,r}^{t,i}|^2+(q-1)\right)
|R_{s,r}^{t,x}|^qds+K_{s,r}^{t,x}ds,
\end{align*}
where
\begin{align*}
& M_{s,r}^{t,i}:=\sqrt{2\nu}\frac{\left\langle \nabla_{R_{s,r}^{t,x}}A_i\left(X_{s,r}^{t,x}\right),
R_{s,r}^{t,x}\right\rangle}{|R_{s,r}^{t,x}|^2},\\
& N_{s,r}^t=2\nu\sum_{i=1}^k \frac{\left|\nabla_{R_{s,r}^{t,x}}A_i\left(X_{s,r}^{t,x}\right)\right|^2}{|R_{s,r}^{t,x}|^2}-
\frac{2\nu\left\langle \text{Ric}^\sharp_{X_{s,r}^{t,x}}R_{s,r}^{t,x}, R_{s,r}^{t,x} \right\rangle}{|R_{s,r}^{t,x}|^2}\\
&+\frac{2\left\langle \nabla_{R_{s,r}^{t,x}}\left((2-r)w_1(s)+(r-1)w_2(s)\right)(X_{s,r}^{t,x}),
R_{s,r}^{t,x}\right\rangle}{|R_{s,r}^{t,x}|^2},\\
&K_{s,r}^{t,x}=q\left\langle R_{s,r}^{t,x}, w_1(s,X_{s,r}^{t,x})-w_2(s,X_{s,r}^{t,x})\right\rangle\cdot |R_{s,r}^{t,x}|^{q-2}-(q-1)|R_{s,r}^{t,x}|^{q}.
\end{align*}

Hence we have
\begin{equation}\label{l4-4-5}
|R_{s,r}^{t,x}|^q=\int_t^s L_{s,r}^{t,x}(L_{v,r}^{t,x})^{-1}K_{v,r}^{t,x}dv,
\end{equation}
where $L_{s,r}^{t,x}$ is the solution to following equation (which is in fact strictly positive),
\begin{equation*}
\begin{cases}
& dL_{s,r}^{t,x}=q\sum_{i=1}^k M_{s,r}^{t,i}L_{s,r}^{t,x}d\tilde{\beta}_s^{i,r}+\left(\frac{q}{2}N_{s,r}^t+\frac{q(q-2)}{2}\sum_{i=1}^k |M_{s,r}^{t,i}|^2+(q-1)\right)
L_{s,r}^{t,x}ds,\\
&L_{t,r}^{t,x}=1.
\end{cases}
\end{equation*}

Applying Young's inequality it is easy to see that
\begin{equation}\label{l4-4-5a}
\sup_{0\le t\le s \le T}\sup_{r\in [1,2],x\in M}K_{s,r}^{t,x}\le c_9\sup_{s\in [0,T]}\|w_1(s)-w_2(s)\|_\infty^q.
\end{equation}

Based on the (linear) equation for $L_{s,r}^{t,x}$ and following the same arguments as in the proof of Lemma \ref{l4-3}, we can verify that
\begin{equation*}
\begin{split}
\sup_{0\le t\le s \le T}\sup_{r\in [1,2],x\in M}\E\big[L_{s,r}^{t,x}(L_{v,r}^{t,x})^{-1}\big|\mathscr{G}_{T,r}^{t,u(x)}\big]\le
e^{c_{10}\widetilde{\mathbf{C}}_0(T)}.
\end{split}
\end{equation*}
Combining this with \eqref{l4-4-5a} into \eqref{l4-4-5} yields
\begin{equation}\label{l4-4-6}
\sup_{0\le t\le s \le T}\sup_{r\in [1,2],x\in M}\E\big[\big|R_{s,r}^{t,x}\big|^q\big|\mathscr{G}_{T,r}^{t,u(x)}\big]\le
c_{11}e^{c_{12}\CC_0(T)}T\cdot\sup_{s\in [0,T]}\|w_1(s)-w_2(s)\|_\infty^q.
\end{equation}

On the other hand, for every $u\in \F(M)$, the following estimates hold
\begin{align*}
&\quad \left|\h_{A_j}\left(\S(h)(U_{s,2}^{t,\cdot})\right)(u)-\h_{A_j}\left(\S(h)(U_{s,1}^{t,\cdot})\right)(u)\right|\\
&=\left|\int_1^2 \frac{\partial}{\partial r}\Big(\h_{A_j}\left(\S(h)(U_{s,r}^{t,\cdot})\right)(u)\Big)dr\right|
=\left|\int_1^2 \frac{\partial}{\partial r}\Big(D\left(\S(h)(U_{s,r}^{t,\cdot})\right)(u)\left(\h_{A_j}\right)\Big)dr\right|\\
&\le c_{13}\int_1^2 \Big(\left|\nabla^2 h(X_{s,r}^{t,\pi(u)})\right|\left|R_{s,r}^{t,\pi(u)}\right|\left|Q_{s,r}^{t,\pi(u),j}\right|\\
&\quad\quad +\left(\left|\nabla h(X_{s,r}^{t,\pi(u)})\right|+\left| h(X_{s,r}^{t,\pi(u)})\right|\right)\cdot \left(\left|\Gamma_{s,r}^{t,u,j}\right|+
\left|W_{s,r}^{t,u,j}\right|\left|V_{s,r}^{t,u}\right|\right)\Big)dr\\
&\le c_{14}\int_1^2 \Big(\left|\nabla^2 h(X_{s,r}^{t,\pi(u)})\right|\left|R_{s,r}^{t,\pi(u)}\right|\left|Q_{s,r}^{t,\pi(u),j}\right|
+\|h\|_{2,p}\cdot \left(\left|\Gamma_{s,r}^{t,u,j}\right|+
\left|W_{s,r}^{t,u,j}\right|\left|V_{s,r}^{t,u}\right|\right)\Big)dr,
\end{align*}
where the first inequality can be deduced by the expression of $\S(h)$ in local charts and the second inequality follows from Sobolev's embedding theorem.

Therefore, based on these estimates, we have
\begin{equation}\label{l4-4-7}
\begin{split}
&\quad \E\left[\int_M \left|\h_{A_j}\left(\S(h)(U_{s,2}^{t,\cdot})\right)(u(x))-\h_{A_j}\left(\S(h)(U_{s,1}^{t,\cdot})\right)(u(x))\right|^p\mu(dx)\right]\\
&\le c_{15}\Bigg(\int_1^2 \int_M \E\left[\left|\nabla^2 h(X_{s,r}^{t,x})\right|^p\cdot
\E\left[\left|R_{s,r}^{t,x}\right|^p\left|Q_{s,r}^{t,x,j}\right|^p\Big|\mathscr{G}_{T,r}^{t,u(x)}\right]\right]\mu(dx)dr\\
&\quad \quad + \|h\|_{2,p}^p \cdot \left(\int_1^2\int_M\E\left[\left|\Gamma_{s,r}^{t,u(x),j}\right|^p+
\left|W_{s,r}^{t,u(x),j}\right|^p\left|V_{s,r}^{t,u(x)}\right|^p\right]\mu(dx)dr\right)\Bigg)\\
&\le c_{16}\Bigg(\int_1^2 \left(\int_M\E\left[\left|\nabla^2 h(X_{s,r}^{t,x})\right|^p\right]\mu(dx)\right)\cdot
\left(\sup_{x\in M}\E\left[\left|R_{s,r}^{t,x}\right|^{2p}\Big|\mathscr{G}_{T,r}^{t,u(x)}\right]
\E\left[\left|Q_{s,r}^{t,x,j}\right|^{2p}\Big|\mathscr{G}_{T,r}^{t,u(x)}\right]\right)^{\frac{1}{2}}dr\\
&+\|h\|^p_{2,p}\cdot\left(\int_1^2 \int_M\E\left[\left|\Gamma_{s,r}^{t,u(x),j}\right|^p\right]\mu(dx)dr+
\int_1^2 \sup_{u\in \F(M)}\left(\E\left[\left|W_{s,r}^{t,u,j}\right|^{2p}\right]^{\frac{1}{2}}
\E\left[\left|V_{s,r}^{t,u}\right|^{2p}\right]^{\frac{1}{2}}\right)dr\right)\Bigg)\\
&\le c_{17}\|h\|^p_{2,p}e^{c_{17}\mathbf{C}_0(p,T)}\Bigl(\big(\mathbf{C}_0(p,T)+1\big)T^pe^{c_{17}\mathbf{C}_0(2p,T)}+T^pe^{c_{17}\mathbf{C}_0(p,T)} \\
&\quad+e^{c_{17}\widetilde{\mathbf{C}}_0(T)}
\left(T^p+T^{\frac{1}{2}}+T^{p+\frac{1}{2}}K_T(w_1,w_2)^p\right)\Bigr) \cdot \sup_{s\in [0,T]}\|w_1(s)-w_2(s)\|^p_{1,p}.
\end{split}
\end{equation}
Here the first inequality is due to H\"older's inequality and the fact that $X_{s,r}^{t,x}$ is $\mathscr{G}_{T,r}^{t,u(x)}$-measurable. For the second inequality we
 applied H\"older's inequality again and the last step follows from  estimates \eqref{l4-4-1a}, \eqref{l4-4-2a}, \eqref{l4-4-4}, \eqref{l4-4-4a},
\eqref{l4-4-6a}, \eqref{l4-4-6}. By now we have finished the proof of \eqref{l4-4-2}.
\end{proof}
\vskip 5mm
Given $\nu>0$, $T>0$, $w\in \C_T^\infty$ and $v_0\in C^\infty_b(\Gamma(TM))$  with $\div v_0=0$, we consider the following FBSDE,
\begin{equation}\label{e4-4}
\begin{cases}
&dU_s^{t,u}=\sqrt{2\nu}\sum_{i=1}^k \h_{A_i}(U_s^{t,u})\circ dB_s^i-\h_{w(s)}(U_s^{t,u})ds,\ 0\le t\le s \le T,\\
&d\tilde Y_s^{t,u}=\sum_{i=1}^k \tilde Z_s^{t,u,i}dB_s^i-\tilde F_{w(s)}(U_s^{t,u})ds,\\
& U_t^{t,u}=u,\ \tilde Y_T^{t,u}=\S(v_0)(U_T^{t,u}),
\end{cases}
\end{equation}
where $F_{w(s)}$ is defined by \eqref{e3-4} associated with the vector field $w(s,\cdot)$ and $\tilde F_{w(s)}\in T^{1,0}_0\R^d$ is the scalarization of
$F_{w(s)}$ defined by \eqref{e2-7a}.  Then we define $\I_{\nu,T,v_0}: \C_T^\infty \to \C_T^\infty$ by
\begin{equation}\label{e4-5}
\I_{\nu,T,v_0}(w)(t,x):=\p\left(\theta(t)\right)(x),\ \ \forall\ \ (t,x)\in [0,T]\times M,\ w\in \C_T^\infty,
\end{equation}
where $\p$ is the projection map defined in Lemma \ref{l3-3},
$\theta(t,x):=u\tilde Y_t^{t,u}\in T_x M$ for every $u\in \F(M)$ with $\pi(u)=x$
as explained in the proof of Theorem \ref{t2-2}. The value of
$\theta(t,x)$ is independent of the choice of $u$, so $\theta(t,x)$ is well defined (since $w\in \C_T^\infty$ is regular enough, by
the arguments in \cite{PP2} it is not difficult to verify that $\theta\in C^{1,\infty}_b\left([0,T];\Gamma(TM)\right)$).

We have the following

\begin{lemma}\label{l4-5}
Suppose that $\nu>0$ and $v_0\in C^\infty_b(\Gamma(TM))\cap L^2_{{\rm div}}(\Gamma(TM))$. For every $p>d$, there exist
positive constants $K_0(\|v_0\|_{2,p},\nu)$ (which depend on $\|v_0\|_{2,p}$ and $\nu$) and $T_0(\|v_0\|_{2,p},\nu)$ such that, for every $T\in (0,T_0)$ and
$w_1,w_2\in \C_T^\infty$ satisfying $\sup_{t\in [0,T]}\max\big(\|w_1(t)\|_{2,p},$ $\|w_2(t)\|_{2,p}\big)\le K_0$, the
following estimates hold.
\begin{equation}\label{l4-5-1}
\begin{split}
\sup_{t\in [0,T]}\|\I_{\nu,T,v_0}(w_1)(t,\cdot)\|_{2,p}\le K_0,
\end{split}
\end{equation}
\begin{equation}\label{l4-5-2}
\begin{split}
\sup_{t\in [0,T]}\|\I_{\nu,T,v_0}(w_1)(t,\cdot)-\I_{\nu,T,v_0}(w_2)(t,\cdot)\|_{1,p}\le \frac{1}{2}\sup_{t\in [0,T]}\|w_1(t)-w_2(t)\|_{1,p}.
\end{split}
\end{equation}
Moreover, if we assume that $\nu\in (0,1)$, then the constants $K_0$ and $T_0$ chosen above only depend on $\|v_0\|_{2,p}$ (and are independent of $\nu$).
\end{lemma}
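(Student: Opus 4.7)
Since in \eqref{e4-4} the generator $\tilde F_{w(s)}$ does not depend on $(\tilde Y,\tilde Z)$, taking expectations in the BSDE gives the Feynman--Kac-type identity
\begin{equation*}
\tilde Y_t^{t,u} = \E\bigl[\S(v_0)(U_T^{t,u})\bigr] + \int_t^T \E\bigl[\tilde F_{w(s)}(U_s^{t,u})\bigr]\,ds,
\end{equation*}
and hence $\theta(t,x)=u\tilde Y_t^{t,u}$ admits the same formula after multiplication by $u$. The idea is to close a fixed-point argument for $\I_{\nu,T,v_0}$ on the ball $\mathcal B_{K_0}:=\{w\in\C_T^\infty:\sup_t\|w(t)\|_{2,p}\le K_0\}$ by differentiating this representation up to twice in the initial frame $u$, invoking the moment bounds of Lemmas \ref{l4-1}, \ref{l4-2}, \ref{l4-4}, together with the functional bounds of Lemmas \ref{l3-3}, \ref{l3-4} to handle the quadratic-in-$w$ source $F_{w(s)}$.

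\textbf{Invariance \eqref{l4-5-1}.} By the Parseval identity \eqref{e4-1} one has $|\nabla\theta|^2=\sum_i|\nabla_{A_i}\theta|^2$, and, up to lower-order commutator terms involving only $\nabla\theta$, a matching bound for $|\nabla^2\theta|^2$; via \eqref{t2-1-1}--\eqref{t2-1-2} these covariant derivatives of $\theta$ are represented by $\h_{A_i}$, $\h_{A_i}\h_{A_j}$ acting on the scalarization. Applying Jensen's inequality, differentiating the representation horizontally up to second order, and invoking \eqref{l4-2-1}--\eqref{l4-2-2} with $h=v_0$ and with $h=F_{w(s)}$ (whose $W^{2,p}$-norm is bounded by $C_5(\nu+K_0)K_0$ through \eqref{l3-4-1}) yields
\begin{equation*}
\|\theta(t)\|_{2,p} \le \Phi_1(T,\nu,K_0)\bigl(\|v_0\|_{2,p} + T(\nu+K_0)K_0\bigr),
\end{equation*}
where $\Phi_1$ remains bounded as $T\downarrow 0$ since the quantities $\mathbf{C}(p,T),\widetilde{\mathbf{C}}(T)$ of \eqref{l4-2-0} vanish there. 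Composing with $\p$ preserves this bound by \eqref{l3-3-1}. Fixing $K_0:=4C_3\Phi_1(0,\nu,0)\|v_0\|_{2,p}$ and then choosing $T_0$ small enough so that $T_0(\nu+K_0)K_0\le\|v_0\|_{2,p}$ and $\Phi_1(T_0,\nu,K_0)\le 2\Phi_1(0,\nu,0)$ delivers \eqref{l4-5-1}.

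\textbf{Contraction \eqref{l4-5-2}.} Subtracting the two representations and adding/subtracting $\tilde F_{w_2(s)}(U^{(1)}_s)$ gives
\begin{align*}
\tilde Y^{(1),t,u}_t - \tilde Y^{(2),t,u}_t &= \E\bigl[\S(v_0)(U^{(1)}_T) - \S(v_0)(U^{(2)}_T)\bigr]\\
&\quad + \int_t^T \E\bigl[(\tilde F_{w_1(s)} - \tilde F_{w_2(s)})(U^{(1)}_s)\bigr]\,ds\\
&\quad + \int_t^T \E\bigl[\tilde F_{w_2(s)}(U^{(1)}_s) - \tilde F_{w_2(s)}(U^{(2)}_s)\bigr]\,ds.
\end{align*}
Apply $\h_{A_j}$ and take $L^p(M)$-norms in $x$; bound each term by $\sup_s\|w_1(s)-w_2(s)\|_{1,p}$: the outer two via Lemma \ref{l4-4} estimates \eqref{l4-4-0}, \eqref{l4-4-2} (with $h=v_0$ and $h=F_{w_2(s)}$ respectively), and the middle via Lemma \ref{l4-2} \eqref{l4-2-1} with $h=F_{w_1(s)}-F_{w_2(s)}$, whose $W^{1,p}$ norm is controlled by \eqref{l3-4-2}. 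After composing with $\p$ via \eqref{l3-3-2}, every term carries at least one positive power of $T$, so the overall Lipschitz constant $\Phi_2(T,\nu,K_0)$ tends to $0$ as $T\downarrow 0$, and shrinking $T_0$ makes it $\le 1/2$. The $\nu$-uniformity for $\nu\in(0,1)$ is immediate: $\nu$ enters $\mathbf{C},\widetilde{\mathbf{C}}$ only through powers at least $1/2$ of $\nu\le 1$, and enters \eqref{l3-4-1} only through $(\nu+K_0)\le(1+K_0)$, so in both cases it is absorbed into a constant depending on $K_0$ alone.

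\textbf{Main obstacle.} The delicate point is the apparent circularity in the defining inequality $K_0\ge C\,\Phi_1(T,\nu,K_0)(\|v_0\|_{2,p}+T(\nu+K_0)K_0)$: the source $F_{w(s)}$ is quadratic in $w$, which could in principle prevent closure for large $K_0$. The resolution is that this quadratic contribution is multiplied by $T$, so once a candidate $K_0$ proportional to $\|v_0\|_{2,p}$ is fixed the inequality collapses to the linear $K_0\gtrsim\|v_0\|_{2,p}$ for $T$ sufficiently small; this is precisely the short-time mechanism behind local existence of strong solutions to Navier--Stokes.
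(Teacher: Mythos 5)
Your proposal is correct and follows essentially the same route as the paper: it exploits that the generator $\tilde F_{w(s)}$ is independent of $(\tilde Y,\tilde Z)$ and that $\tilde Y_t^{t,u}$ is deterministic to get a Feynman--Kac representation (the paper differentiates the BSDE horizontally and then takes expectations, which is the same mechanism), controls $\|\theta(t)\|_{2,p}$ and the $W^{1,p}$-difference through the horizontal-derivative moment bounds of Lemmas \ref{l4-2} and \ref{l4-4} with $h=v_0$ and $h=F_{w(s)}$, uses \eqref{l3-4-1}--\eqref{l3-4-2} for the quadratic source and \eqref{l3-3-1}--\eqref{l3-3-2} for $\p$, and closes by smallness of $\mathbf{C}(p,T)$, $\widetilde{\mathbf{C}}(T)$ as $T\downarrow 0$, including the same observation for $\nu$-uniformity when $\nu\in(0,1)$. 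The only differences (splitting the generator difference symmetrically in $w_1,w_2$, and taking expectation before rather than after differentiating in $u$) are immaterial.
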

\begin{proof}
{\bf Step (i)} For $l=1,2$, let $\theta_l(t,x):=u\tilde Y_{t,l}^{t,u}\in T_x M$ for every $u\in \F(M)$ with $\pi(u)=x$, where
$\left(U_{s,l}^{t,u}, \tilde Y_{s,l}^{t,u},\tilde Z_{s,l}^{t,u}\right)$ denotes the solution of \eqref{e4-4} with
associated $w\in \C_T^\infty$ replaced by $w_l(s)\in \C_T^\infty$. By definition of $\I_{\nu,T,v_0}$ we
 know that $\I_{\nu,T,v_0}(w_l)(t,\cdot)=\p\left(\theta_l(t)\right)(\cdot)$.

By definition of $\theta_l$ we have
$\tilde Y_{t,l}^{t,u}=\S(\theta_l)(t,u)$. According to properties \eqref{t2-1-1}, \eqref{t2-1-2}, \eqref{e4-1}, \eqref{e4-2}, we can deduce that
\begin{equation}\label{l4-5-3}
\|\theta_l(t)\|_{2,p}^p\le c_0\sum_{i,j=1}^k
\int_M\left(\left|\tilde Y_{t,l}^{t,u(x)}\right|^p+
\left|\h_{A_i}\left(\tilde Y_{t,l}^{t,\cdot}\right)(u(x))\right|^p+
\left|\h_{A_j}\h_{A_i}\left(\tilde Y_{t,l}^{t,\cdot}\right)(u(x))\right|^p\right)\mu(dx).
\end{equation}
\begin{equation}\label{l4-5-2a}
\begin{split}
&\quad \|\theta_1(t)-\theta_2(t)\|_{1,p}^p\\
&\le
c_1\sum_{i=1}^k\int_M\left(\left|\tilde Y_{t,1}^{t,u(x)}-\tilde Y_{t,2}^{t,u(x)}\right|^p+
\left|\h_{A_i}\left(\tilde Y_{t,1}^{t,\cdot}\right)(u(x))-\h_{A_i}\left(\tilde Y_{t,2}^{t,\cdot}\right)(u(x))\right|^p\right)\mu(dx)
\end{split}
\end{equation}
Here, as before, $x\mapsto u(x)$ is a continuous map from $M$ to $\F(M)$ satisfying  $\pi(u(x))=x$.

We can exchange the order of differentiation and integration in  \eqref{e4-4}
(since $M$ is compact and $F_{w(s,)}\in C_b^\infty(\Gamma(TM))$) to obtain

\begin{equation}\label{l4-5-3a}
\begin{split}
\h_{A_j}\h_{A_i}\left(\tilde Y_{t,l}^{t,\cdot}\right)(u)&=\h_{A_j}\h_{A_i}\left(\S(v_0)\left(U_{T,l}^{t,\cdot}\right)\right)(u)-
\sum_{m=1}^k \int_t^T \h_{A_j}\h_{A_i}\left(\tilde Z_{s,l}^{t,\cdot,m}\right)(u)dB_s^m\\
&\quad +\int_t^T \h_{A_j}\h_{A_i}\left(\tilde F_{w_l(s)}\left(U_{s,l}^{t,\cdot}\right)\right)(u)ds.
\end{split}
\end{equation}
Noting that $Y_{t,l}^{t,\cdot}$ is non-random and according to the equation above, we have
\begin{equation*}
\begin{split}
&\quad \left|\h_{A_j}\h_{A_i}\left(\tilde Y_{t,l}^{t,\cdot}\right)(u)\right|
=\left|\E\left[\h_{A_j}\h_{A_i}\left(\tilde Y_{t,l}^{t,\cdot}\right)(u)\right]\right|\\
&\le \E\left[\left|\h_{A_j}\h_{A_i}\left(\S(v_0)\left(U_{T,l}^{t,\cdot}\right)\right)(u)\right|\right]
+\int_t^T \E\left[\left|\h_{A_j}\h_{A_i}\left(\tilde F_{w_l(s)}\left(s, U_{s,l}^{t,\cdot}\right)\right)(u)\right|\right] ds.
\end{split}
\end{equation*}
Hence we can deduce that
\begin{equation}\label{l4-5-4}
\begin{split}
&\quad \quad \int_M \left|\h_{A_j}\h_{A_i}\left(\tilde Y_{t,l}^{t,\cdot}\right)(u(x))\right|^p \mu(dx)\\
&\le c_3\Bigg(\int_M \E\left[\left|\h_{A_j}\h_{A_i}\left(\S(v_0)\left(U_{T,l}^{t,\cdot}\right)\right)(u(x))\right|^p\right]\mu(dx)\\
&\quad \quad +T^{p-1}\cdot \int_t^T \left(\int_M\E\left[\left|\h_{A_j}\h_{A_i}
\left(\tilde F_{w_l(s)}\left(U_{s,l}^{t,\cdot}\right)\right)(u(x))\right|^p \right]\mu(dx)\right)ds\Bigg)\\
&\le c_4\Big(e^{c_4\widetilde{\mathbf{C}}_0(T)}+e^{c_4\mathbf{C}_0(2p,T)}+\mathbf{C}_0(p,T)e^{c_4\big(\mathbf{C}_0(p,T)+\mathbf{C}_0(2p,T)\big)}+K_T(w_l)^pT^p  e^{c_4\big(\widetilde{\mathbf{C}}_0(T)+\mathbf{C}_0(p,T)\big)}\Big)\\
&\quad \cdot \left(\|v_0\|^p_{2,p} +T^p\sup_{s\in[0,T]} \|F_{w_l(s)}\|^p_{2,p}\right)\\
&\le c_5\Big(e^{c_4\widetilde{\mathbf{C}}_0(T)}+e^{c_4\mathbf{C}_0(2p,T)}+\mathbf{C}_0(p,T)e^{c_4\big(\mathbf{C}_0(p,T)+\mathbf{C}_0(2p,T)\big)}+K_T(w_l)^pT^p  e^{c_4\big(\widetilde{\mathbf{C}}_0(T)+\mathbf{C}_0(p,T)\big)}\Big)\\
&\quad \cdot \left(\|v_0\|^p_{2,p} +T^p\bigl( \nu^pK_T(w_l)^p+K_T(w_l)^{2p} \bigr)\right),
\end{split}
\end{equation}
Here $\mathbf{C}_0(p,T)$, $\CC_0(T)$ are the same constants as those in Lemma \ref{l4-4}, $K_T(w_l):=\sup_{s\in [0,T]}\|w_l(s)\|_{2,p}$, the first inequality above is due to
H\"older's inequality, in the second inequality we have applied \eqref{l4-2-2} and the last inequality follows from
\eqref{l3-4-1}.

By \eqref{l4-5-4} we can find positive constants $T_0$ and $K_0>\|v_0\|_{2,p}$ (note that $\lim_{T\downarrow 0}
\big(\mathbf{C}_0(p,T)$ $+\CC_0(T)\big)=0$)
such that, for every $T\in (0,T_0)$, $l=1,2$ and $w_l\in \C_T^\infty$ satisfying $\sup_{s\in [0,T]}\|w_l(s)\|_{2,p}\le K_0$, it holds that 
\begin{equation*}
\sum_{i,j=1}^k \sup_{t\in [0,T]}\int_M \left|\h_{A_j}\h_{A_i}\left(\tilde Y_{t,l}^{t,\cdot}\right)(u(x))\right|^p \mu(dx)\le \frac{1}{3c_0}\left(\frac{K_0}{C_3}\right)^p,
\end{equation*}
where $c_0$ and $C_3$ are the positive constants in \eqref{l4-5-3} and \eqref{l3-3-1} respectively.
Furthermore, if we assume $\nu\in (0,1)$, then according to \eqref{l4-5-4} the constants $K_0$ and $T_0$ chosen
above only depend on $\|v_0\|_{2,p}$ and are independent of $\nu$.

Similarly we can also prove that, for every $l=1,2$, $T\in (0,T_0)$, $t\in [0,T]$ and
$w_l\in \C_T^\infty$ satisfying $\sup_{s\in [0,T]}\|w_l(s)\|_{2,p}\le K_0$,
\begin{equation*}
\int_M\left|\tilde Y_{t,l}^{t,u(x)}\right|^p\mu(dx)+\sum_{i=1}^k\int_M \left|\h_{A_i}\left(\tilde Y_{t,l}^{t,\cdot}\right)(u(x))\right|^p \mu(dx)\le \frac{2}{3c_0}\left(\frac{K_0}{C_3}\right)^p
\end{equation*}
Putting all the above estimates into \eqref{l4-5-3} yields that
for every  $T\in (0,T_0)$ and $w_l\in \C_T^\infty$ satisfying $\sup_{s\in [0,T]}\|w_l(s)\|_{2,p}\le K_0$
\begin{equation}\label{l4-5-4a}
\sup_{t\in [0,T]}\|\theta_l(t)\|_{2,p}\le \frac{K_0}{C_3},\ \forall\ w_l\in \C_T^\infty.
\end{equation}
Hence, according to \eqref{l3-3-1}, we have, for every $T\in (0,T_0)$ and $w_1\in \C_T^\infty$ satisfying $\sup_{s\in [0,T]}\|w_1(s)\|_{2,p}\le K_0$
\begin{equation*}
    \begin{aligned}
        &\sup_{t\in [0,T]}\|\I_{\nu,T,v_0}(w_1)(t,\cdot)\|_{2,p}=\sup_{t\in [0,T]}\|\p\left(\theta_1(t)\right)(\cdot)\|_{2,p}\le K_0,
        \end{aligned}
\end{equation*}
This implies \eqref{l4-5-1}.

{\bf Step (ii)} According to the same arguments used for \eqref{l4-5-3a} we obtain, for every $l=1,2$,
\begin{equation*}
\begin{split}
\h_{A_i}\left(\tilde Y_{t,l}^{t,\cdot}\right)(u)&=\h_{A_i}\left(\S(v_0)\left(U_{T,l}^{t,\cdot}\right)\right)(u)-
\sum_{m=1}^k \int_t^T \h_{A_i}\left(\tilde Z_{s,l}^{t,\cdot,m}\right)(u)dB_s^m\\
&\quad +\int_t^T \h_{A_i}\left(\tilde F_{w_l(s)}\left(U_{s,l}^{t,\cdot}\right)\right)(u)ds.
\end{split}
\end{equation*}
Based on such expression we have
\begin{equation}\label{l4-5-5}
\begin{split}
&\quad\int_M \left|\h_{A_i}\left(\tilde Y_{t,1}^{t,\cdot}\right)(u(x))-\h_{A_i}\left(\tilde Y_{t,2}^{t,\cdot}\right)(u(x))\right|^p\mu(dx)\\
&\le c_8\Bigg(\int_M \E\left[\left|\h_{A_i}\left(\S(v_0)\left(U_{T,1}^{t,\cdot}\right)\right)(u(x))-
\h_{A_i}\left(\S(v_0)\left(U_{T,2}^{t,\cdot}\right)\right)(u(x))\right|^p\right]\mu(dx)\\
&+T^{p-1}\cdot \int_t^T \left(\int_M\E\left[\left|\h_{A_i}
\left(\tilde F_{w_1(s)}\left(U_{s,1}^{t,\cdot}\right)\right)(u(x))-
\h_{A_i}\left(\tilde F_{w_2(s)}\left(U_{s,2}^{t,\cdot}\right)\right)(u(x))\right|^p \right]\mu(dx)\right)ds\Bigg)\\
&\le c_9e^{c_{9}\mathbf{C}_0(p,T)}\Bigl(\big(\mathbf{C}_0(p,T)+1\big)T^pe^{c_{9}\mathbf{C}_0(2p,T)}+T^pe^{c_{9}\mathbf{C}_0(p,T)} +e^{c_{9}\widetilde{\mathbf{C}}_0(T)}
\left(T^p+T^{\frac{1}{2}}+T^{p+\frac{1}{2}}K_T(w_1,w_2)\right)\Bigr)\\
&\quad \cdot \left(\|v_0\|_{2,p}^p+T^p\sup_{s\in [0,T]}\|F_{w_1(s,\cdot)}\|^p_{2,p}\right)\cdot \sup_{s\in [0,T]}\|w_1(s)-w_2(s)\|^p_{1,p}\\
&\quad + c_9\bigl( e^{c_9\mathbf{C}(p,T)} + e^{c_9\widetilde{\mathbf{C}}(T)}\bigr)T^p\cdot \sup_{s\in [0,T]}\|F_{w_1(s,\cdot)}-F_{w_2(s,\cdot)}\|_{1,p}^p\\
&\le c_{10}\Bigg(e^{c_{9}\mathbf{C}_0(p,T)}\left(\big(\mathbf{C}_0(p,T)+1\big)T^pe^{c_{9}\mathbf{C}_0(2p,T)}+T^pe^{c_{9}\mathbf{C}_0(p,T)} +e^{c_{9}\widetilde{\mathbf{C}}_0(T)}
\left(T^p+T^{\frac{1}{2}}+T^{p+\frac{1}{2}}K_T(w_1,w_2)^p\right)\right)\\
&\quad \quad \quad \cdot \left(\|v_0\|_{2,p}^p+T^p\bigl( \nu^pK_T(w_1,w_2)^p+K_T(w_1,w_2)^{2p} \bigr)\right)\\
&\quad \quad \quad+\bigl(e^{c_9\mathbf{C}(p,T)} + e^{c_9\widetilde{\mathbf{C}}(T)}\bigr)T^p
\cdot\bigl(K_T(w_1,w_2)^p+\nu^p\bigr)\Bigg)\cdot \sup_{s\in [0,T]}\|w_1(s)-w_2(s)\|^p_{1,p}
\end{split}
\end{equation}
Here $K_T(w_1, w_2):=\sup_{s\in [0,T]}\max\left(\|w_1(s)\|_{2,p},\|w_2(s)\|_{2,p}\right)$, the first inequality above is due to
H\"older's inequality, the second  follows from \eqref{l4-2-1} and \eqref{l4-4-2} and
in the last one we have applied  \eqref{l3-4-1}, \eqref{l3-4-2}.

By \eqref{l4-5-5} there exist positive constants $T_0$ and $K_0>\|v_0\|_{2,p}$
such that, for every $T\in (0,T_0)$ and $w_1,w_2\in \C_T^\infty$ satisfying $K_T(w_1,w_2)\le K_0$, it holds that 
\begin{align*}
&\quad \sum_{i=1}^k\sup_{t\in [0,T]}\int_M \left|\h_{A_i}\left(\tilde Y_{t,1}^{t,\cdot}\right)(u(x))-\h_{A_i}\left(\tilde Y_{t,2}^{t,\cdot}\right)(u(x))\right|^p \mu(dx)\\
&\le \frac{1}{2c_1}\left(\frac{\sup_{t\in [0,T]}\|w_1(t)-w_2(t)\|_{1,p}}{2C_3}\right)^p,
\end{align*}
where $c_1$ and $C_3$ are the positive constants in \eqref{l4-5-2a} and \eqref{l3-3-2} respectively.
Moreover  the constants $K_0$ and $T_0$ above
are independent of $\nu$ if $\nu\in (0,1)$.

Following the same procedures as above, we can deduce that, for every $T\in (0,T_0)$ and $w_1,w_2\in \C_T^\infty$ satisfying $K_T(w_1,w_2)\le K_0$,
\begin{equation*}
\sup_{t\in [0,T]}\int_M \left|\tilde Y_{t,1}^{t,u(x)}-\tilde Y_{t,2}^{t,u(x)}\right|^p \mu(dx)\le \frac{1}{2c_1}\left(\frac{\sup_{s\in [0,T]}\|w_1(t)-w_2(t)\|_{1,p}}{2C_3}\right)^p.
\end{equation*}

Therefore, combining all the above estimates into \eqref{l4-5-2a} we deduce that, for every $T\in (0,T_0)$ and $w_1,w_2\in \C_T^\infty$ satisfying $K_T(w_1,w_2)\le K_0$,
\begin{equation}\label{l4-5-6}
\sup_{t\in [0,T]}\|\theta_1(t)-\theta_2(t)\|_{1,p}\le \frac{1}{2C_3}\sup_{s\in [0,T]}\|w_1(t)-w_2(t)\|_{1,p}.
\end{equation}
Then, according to \eqref{l3-3-2}, we obtain, for every $T\in (0,T_0)$ and $w_1,w_2\in \C_T^\infty$ satisfying $K_T(w_1,w_2)\le K_0$,
\begin{equation*}
\begin{split}
&\quad \sup_{t\in [0,T]}\|\I_{\nu,T,v_0}(w_1)(t,\cdot)-\I_{\nu,T,v_0}(w_2)(t,\cdot)\|_{1,p}
=\sup_{t\in [0,T]}\|\p\left(\theta_1(t)\right)(\cdot)-\p\left(\theta_2(t)\right)(\cdot)\|_{1,p}\\
&\le \frac{1}{2}\sup_{t\in [0,T]}\|w_1(t)-w_2(t)\|_{1,p}.
\end{split}
\end{equation*}
We have finished the proof of \eqref{l4-5-2}.
\end{proof}

We are now in position to prove  existence and uniqueness of a local solution in Sobolev space for the Navier-Stokes equation \eqref{e3-2} on $M$ via
the forward-backward stochastic differential system \eqref{e4-3}.

\begin{theorem}\label{t4-2}
Suppose that $v_0\in W^{2,p}(\Gamma(TM))\cap L_{{\rm div}}^p(\Gamma(TM))$ for some $p>d$. Then
we can find a positive constant $T_0\left(\|v_0\|_{2,p},\nu\right)$ such that
there exists a unique solution
$\Big(U_s^{t,u},\tilde Y_s^{t,u},\tilde Z_s^{t,u},$ $ v\Big)$ in the time interval $0\le t\le s \le T_0$
of the forward-backward stochastic differential system \eqref{e4-3}
satisfying that
$\left(U_\cdot^{t,u},\tilde Y_\cdot^{t,u},\tilde Z_\cdot^{t,u}\right)\in $
$\mathscr{C}([t,T_0];\mathscr{F}(M))\times\mathscr{C}([t,T_0];\R^{d})\times \mathscr{M}([t,T_0];\R^{dk})$
(for all $0\le t\le T_0$) and
$v\in C\left([0,T_0];W_{{\rm div}}^{2,p}\left(\Gamma(TM)\right)\right)$. Moreover $v\in C\left([0,T_0];W_{{\rm div}}^{2,p}\left(\Gamma(TM)\right)\right)$ is
a solution of the incompressible Navier-Stokes equation \eqref{e3-2} on $M$ (in time interval $[0,T_0]$).
\end{theorem}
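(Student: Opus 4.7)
The plan is to produce $v$ as a fixed point of the map $\I_{\nu, T_0, v_0}$ via Banach's contraction principle, exploiting the self-mapping and contraction estimates of Lemma \ref{l4-5}, and then to identify $v$ with a solution of the Navier-Stokes equation \eqref{e3-2}. Since $\I_{\nu, T, v_0}$ was defined only for smooth drifts and smooth initial data, the first step is to extend it: mollifying the drift $w$ in local charts and applying the Leray projection $\p$ from Lemma \ref{l3-3} produces $w^\ell \in \C_{T}^\infty$ with $w^\ell \to w$ in $C([0, T]; W^{2,p})$, and the stability estimates of Lemma \ref{l4-4} guarantee that $\{\I_{\nu, T, v_0}(w^\ell)\}$ is Cauchy in the $W^{1,p}$-metric, defining $\I_{\nu, T, v_0}(w)$ by passage to the limit. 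An analogous approximation in $v_0$ (mollification plus $\p$) extends the definition to $v_0 \in W^{2,p} \cap L^p_{{\rm div}}$, and the extended map inherits the estimates \eqref{l4-5-1}--\eqref{l4-5-2}.

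Fix $K_0, T_0$ from Lemma \ref{l4-5} and introduce the complete metric space
$$\mathcal{B} := \left\{w \in C\left([0, T_0]; W^{2,p}_{{\rm div}}(\Gamma(TM))\right) \,:\, \sup_{t \in [0, T_0]} \|w(t)\|_{2,p} \le K_0 \right\},$$
equipped with the metric $d(w_1, w_2) := \sup_{t \in [0, T_0]} \|w_1(t) - w_2(t)\|_{1,p}$; completeness holds because a $d$-Cauchy, $W^{2,p}$-bounded sequence converges in $W^{1,p}$ to a limit whose $W^{2,p}$-norm is bounded by $K_0$ via weak lower semicontinuity. Bounds \eqref{l4-5-1}--\eqref{l4-5-2} make the extended $\I_{\nu, T_0, v_0}$ a $\tfrac{1}{2}$-contraction of $\mathcal{B}$ into itself, so Banach's contraction principle yields a unique fixed point $v \in \mathcal{B}$.

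Let $(U, \tilde Y, \tilde Z)$ be the solution of the FBSDE \eqref{e4-4} with drift $v$ and terminal $\S(v_0)(U_{T_0}^{t,u})$, and set $\theta(t, x) := u \tilde Y_t^{t, u}$, well-defined as in Theorem \ref{t2-2}. By construction $v = \p(\theta)$, hence $\div v \equiv 0$ and $\theta = v + \nabla \phi$ with $\phi := \Delta_g^{-1} \div \theta$. Approximating $v$ by smooth drifts and applying Theorem \ref{t2-2} to each approximant, the limit $\theta$ satisfies the tensor PDE $\partial_t \theta - \nabla_v \theta = -\nu \Delta \theta - F_v$ with $\theta(T_0) = v_0$ in the weak sense. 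Taking the divergence and using $\div(\Delta \theta) = \Delta_g \div \theta + \div(\ric^\sharp \theta)$ (cf.\ \eqref{t4-1-3}) together with $\div F_v = \div(\nabla_v v) - \nu \div(\ric^\sharp v)$, one obtains a closed linear parabolic equation for $\div \theta$ with vanishing terminal data; weak uniqueness forces $\div \theta \equiv 0$, hence $\theta = \p(\theta) = v$. Thus $(U, \tilde Y, \tilde Z, v)$ solves \eqref{e4-3}, and writing $p := \Delta_g^{-1}(\div(\nabla_v v) - \nu \div(\ric^\sharp v))$ gives $F_v = \nabla p$ and $\partial_t v - \nabla_v v = -\nu \Delta v - \nabla p$, which is precisely \eqref{e3-2}, interpreted in the weak sense appropriate to $v \in C([0, T_0]; W^{2,p}_{{\rm div}})$ (as in the proof of Theorem \ref{t4-1}(1)). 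Uniqueness is inherited from that of the Banach fixed point.

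The main obstacle is the step identifying $\theta$ with $v$ (i.e., showing $\div \theta \equiv 0$) at Sobolev regularity: Theorem \ref{t2-2} is stated for classical solutions, so the divergence-PDE derivation for $\theta$ must be carried out in weak form before invoking uniqueness of the resulting linear parabolic equation. A secondary technicality is the careful extension of $\I_{\nu, T_0, v_0}$ from smooth to $W^{2,p}$ drifts and initial data, but this is within reach given the estimates of Lemmas \ref{l4-2} and \ref{l4-4} and the Sobolev embedding $W^{2,p} \hookrightarrow C^{1,\alpha}$ which holds since $p > d$.
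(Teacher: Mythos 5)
Your route is essentially the paper's: the invariance and contraction estimates of Lemma \ref{l4-5} drive a fixed-point argument, the fixed point is identified with a Navier--Stokes solution through the divergence computation, and rough $v_0$ is handled by approximation with constants depending only on $\|v_0\|_{2,p}$. Recasting the paper's explicit iteration $w_{n+1}=\I_{\nu,T_0,v_0}(w_n)$ as Banach's theorem applied to an extended map is only a repackaging, and extending $\I_{\nu,T,v_0}$ to $W^{2,p}$ drifts by mollification plus $\p$ is consistent with what the paper does implicitly when it invokes the estimate \eqref{t4-2-9a} for $W^{2,p}$ data; the completeness issue for your ball $\mathcal{B}$ (strong $W^{2,p}$-continuity of the limit) is treated no more carefully in the paper, so I do not count it against you.

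Two steps, however, need more than you give them. First, the equation for $\div\theta$ is not a closed \emph{local} linear parabolic equation: after using \eqref{t4-1-2}--\eqref{t4-1-3} its lower-order terms are $\div\bigl(\nabla_{v(s)}(\theta(s)-v(s))\bigr)-\nu\,\div\bigl(\ric^{\sharp}(\theta(s)-v(s))\bigr)$ with $\theta-v=\nabla\Delta_g^{-1}\div\theta$, a nonlocal perturbation, so a generic ``weak uniqueness'' theorem cannot simply be quoted. The loop has to be closed quantitatively, as the paper does: represent $\div\theta(t)$ through the drifted semigroup $P_{t,s}^{\nu,v}$, control its $L^p$ action via \eqref{l4-1-2}, bound the source terms by \eqref{l3-2-0a}, use $\|\theta(t)-v(t)\|_{1,p}\le c\|\div\theta(t)\|_p$ from \eqref{l3-1-2}, and conclude by Gronwall. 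Relatedly, Theorem \ref{t2-2} requires $C^{2+\alpha}$ terminal data, so for $v_0\in W^{2,p}$ your approximation must also smooth $v_0$ (the paper's two-layer scheme, Steps (i)--(iv)), which is legitimate precisely because $T_0,K_0$ depend only on $\|v_0\|_{2,p}$. Second, Banach's theorem yields uniqueness only within the ball $\sup_{t}\|w(t)\|_{2,p}\le K_0$, while the statement asserts uniqueness among all $v\in C\left([0,T_0];W^{2,p}_{\rm div}(\Gamma(TM))\right)$; a competing solution $\hat v$ of \eqref{e4-3} need not lie in that ball, so one must re-run the stability estimate of type \eqref{t4-2-9a} against $\hat v$ (constants now depending on $\sup_t\|\hat v(t)\|_{2,p}$, with $T_0$ shrunk if necessary) to force $\hat v=v$, and then recover uniqueness of $(U,\tilde Y,\tilde Z)$ from uniqueness of the decoupled FBSDE with drift $v$, as in Step (iv) of the paper.
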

\begin{proof}
{\bf Step (i)} We first assume that $v_0\in C_b^\infty\left(\Gamma(TM)\right)$. Let $\I_{\nu,T,v_0}:\C_T^\infty\to \C_T^\infty$
be the map studied in Lemma \ref{l4-5}. According to Lemma \ref{l4-5} we can find positive constants
$K_0\left(\|v_0\|_{2,p},\nu\right)$ and $T_0\left(\|v_0\|_{2,p},\nu\right)$
such that, for every $w_1,w_2\in \C_{T_0}^\infty$ with $\sup_{t\in [0,T_0]}\max\left(\|w_1(t)\|_{2,p}, \|w_2(t)\|_{2,p}\right)\le K_0$,
\begin{equation}\label{t4-2-1}
\sup_{t\in [0,T_0]}\|\I_{\nu,T_0,v_0}(w_1)(t,\cdot)\|_{2,p}\le K_0,
\end{equation}
\begin{equation}\label{t4-2-2}
\sup_{t\in [0,T_0]}\|\I_{\nu,T_0,v_0}(w_1)(t,\cdot)-\I_{\nu,T_0,v_0}(w_1)(t,\cdot)\|_{1,p}\le \frac{1}{2}
\sup_{t\in [0,T_0]}\|w_1(t)-w_2(t)\|_{1,p}.
\end{equation}

Let $w_0(t,x)=v_0(x)$ for all $t\in [0,T_0]$ and define $w_n\in \C_{T_0}^\infty$ inductively as
$w_{n+1}:=\I_{\nu,T_0,v_0}(w_n)$ for every $n\ge 0$. Tracking the proof of Lemma \ref{l4-5} we can choose
the constant $K_0$ such that $K_0>\|v_0\|_{2,p}$, which implies that
$\sup_{t\in [0,T_0]}\|w_0(t)\|_{2,p}\le K_0$. Note that, by definition, $w_n=\I_{\nu,T_0,v_0}(w_{n-1})$, so according to \eqref{t4-2-1}
and induction procedures
we obtain
\begin{equation}\label{t4-2-3}
\sup_{t\in [0,T_0]}\|w_n(t)\|_{2,p}\le K_0,\ \forall\ n\ge 0.
\end{equation}
Hence, applying \eqref{t4-2-2}, we deduce
\begin{equation}\label{t4-2-3a}
\begin{split}
&\quad \sup_{t\in [0,T_0]}\|w_n(t)-w_{n+1}(t)\|_{1,p}=
\sup_{t\in [0,T_0]}\|\I_{\nu,T_0,v_0}(w_{n-1})(t,\cdot)-\I_{\nu,T_0,v_0}(w_{n})(t,\cdot)\|_{1,p}\\
&\le \frac{1}{2}
\sup_{t\in [0,T_0]}\|w_{n-1}(t)-w_n(t)\|_{1,p},\ \forall\ n\ge 1.
\end{split}
\end{equation}
By \eqref{t4-2-3a} and the fixed point theorem, there exists a (unique) $v\in C\left([0,T_0];W_{{\rm div}}^{1,p}(\Gamma(TM))\right)$ such that
\begin{equation}\label{t4-2-4}
\lim_{n \to \infty}\sup_{t\in [0,T_0]}\|w_n(t)-v(t)\|_{1,p}=0
\end{equation}
On the other hand, from \eqref{t4-2-3} we know that $\{w_n\}_{n\ge 1}$ is weakly compact in
$C\left([0,T_0]; W^{2,p}_{{\rm div}}(\Gamma(TM))\right)$, so $v$ must be the weak limit of $w_n$ and
we have $v\in C\left([0,T_0];W^{2,p}_{{\rm div}}(\Gamma(TM))\right)$.

{\bf Step (ii)}
For every $n\ge 0$, we consider the following FBSDE,
\begin{equation}\label{t4-2-5}
\begin{cases}
&dU_{s,n}^{t,u}=\sqrt{2\nu}\sum_{i=1}^k \h_{A_i}(U_{s,n}^{t,u})\circ dB_s^i-\h_{w_n(s)}(U_{s,n}^{t,u})ds,\ 0\le t\le s \le T_0,\\
&d\tilde Y_{s,n}^{t,u}=\sum_{i=1}^k \tilde Z_{s,n}^{t,u,i}dB_s^i-\tilde F_{w_n(s)}(U_{s,n}^{t,u})ds,\\
& U_{t,n}^{t,u}=u,\ \tilde Y_{T_0,n}^{t,u}=\S(v_0)(U_{T_0,n}^{t,u}).
\end{cases}
\end{equation}
Define $\theta_{n+1}(t,x):=u\tilde Y_{t,n}^{t,u}$ for every $u\in \F(M)$ with $\pi(u)=x$. Note that
the coefficients $v_0$ and $w_n$ in equation \eqref{t4-2-5} are regular enough so that we can apply Theorem \ref{t2-2} to deduce
that $\theta_{n+1}$ satisfies the following equation (in the classical sense) on $TM$
\begin{equation}\label{t4-2-6}
\begin{split}
\theta_{n+1}(t,x)=&v_0(x)+\int_t^{T_0} \Delta \theta_{n+1}(s,x)ds-\int_t^{T_0} \nabla_{w_n(s)}\theta_{n+1}(s)(x)ds
+\int_t^{T_0} F_{w_n(s)}(x)ds.
\end{split}
\end{equation}
By the proof of \eqref{l4-5-4a} and \eqref{l4-5-6} we have
\begin{equation*}
\begin{split}
&\sup_{n\ge 1}\sup_{t\in [0,T_0]}\|\theta_n(t)\|_{2,p}\le c_1,\\
&\sup_{t\in [0,T_0]}\|\theta_{n}(t)-\theta_{n+1}(t)\|_{1,p}
\le c_1\sup_{t\in [0,T_0]}\|w_n(t)-w_{n-1}(t)\|_{1,p},\ \forall\ n\ge 1.
\end{split}
\end{equation*}
It follows from these estimates  and \eqref{t4-2-3a} that $\{\theta_n\}_{n\ge 1}$ is weakly compact in
$C\left([0,T_0]; W^{2,p}(\Gamma(TM))\right)$, as well as a Cauchy-sequence
in $C\left([0,T_0]; W^{1,p}(\Gamma(TM))\right)$. As explained above, we can find  $\theta\in C\left([0,T_0]; W^{2,p}(\Gamma(TM))\right)$
such that $\theta_n$ converges weakly to $\theta$ in $C\left([0,T_0]; W^{2,p}(\Gamma(TM))\right)$ and
\begin{equation*}
\lim_{n \to \infty}\sup_{T\in [0,T_0]}\|\theta_n(t)-\theta(t)\|_{1,p}=0.
\end{equation*}
Combining this with \eqref{t4-2-4} and letting $n \to \infty$ in \eqref{t4-2-6}
the following equation holds (in distributional sense),
\begin{equation}\label{t4-2-7}
\begin{split}
\theta(t,x)=&v_0(x)+\int_t^{T_0} \nu\Delta \theta(s,x)ds-\int_t^{T_0} \nabla_{v(s)}\theta(s)(x)ds
+\int_t^{T_0} F_{v(s)}(x)ds.
\end{split}
\end{equation}
Since $w_{n+1}(t)=\p\left(\theta_{n+1}(t)\right)$, taking $n\to \infty$ we have
$v(t)=\p\left(\theta(t)\right)$. Hence, by definition of the projection operator
$\p$, we can prove that
\begin{equation}\label{t4-2-8}
\begin{split}
\|\theta(t)-v(t)\|_{1,p}&=\|\theta(t)-\p\left(\theta(t)\right)\|_{1,p}=
\|\nabla \Delta_g^{-1}\div \theta(t)\|_{1,p}\\
&\le \|\Delta_g^{-1}\div \theta(t)\|_{2,p}\le c_2\|\div \theta(t)\|_p,\\
\end{split}
\end{equation}
where the last step is due to \eqref{l3-1-2}.

On the other hand, taking the divergence at both sides of \eqref{t4-2-7} and applying \eqref{t4-1-2}, \eqref{t4-1-3} we
obtain
\begin{equation*}
\begin{split}
\div \theta(t,x)&=\nu\int_t^{T_0} \Delta_g \div \theta(s,x)ds-\int_t^{T_0}\div\left(\nabla_{v(s)}
\left(\theta(s)-v(s)\right)\right)(x)ds\\
&+\nu\int_t^{T_0}\div \left(\text{Ric}^\sharp\left(\theta(s)-v(s)\right)\right)(x)ds\\
&=\nu\int_t^{T_0} \Delta_g \div \theta(s,x)-
\langle v(s,x), \nabla \div \theta(s,x)\rangle ds+\int_t^{T_0}\left(\Lambda_1(s,x)+\Lambda_2(s,x)\right)ds,
\end{split}
\end{equation*}

\begin{equation*}
\begin{split}
&\Lambda_1(s,x):=-\div\left(\nabla_{v(s)}
\left(\theta(s)-v(s)\right)\right)(x)+v(s)\left(\div \theta(s,\cdot)\right)(x)\\
&\quad \quad \quad \quad =-\div\left(\nabla_{v(s)}
\left(\theta(s)-v(s)\right)\right)(x)+v(s)\left(\div\left(\theta(s,\cdot)-v(s,\cdot)\right)\right)(x)\\
&\Lambda_2(s,x):=\nu\div \left(\text{Ric}^\sharp\left(\theta(s)-v(s)\right)\right)(x).
\end{split}
\end{equation*}
We have  also used the fact that $\div v(s)=0$.

This implies that
\begin{equation*}
\begin{split}
\div \theta (t,x)=\int_t^{T_0}P_{t,s}^{\nu,v}\left(\Lambda_1(s,\cdot)+\Lambda_2(s,\cdot)\right)(x)ds,\ t\in [0,T_0],
\end{split}
\end{equation*}
where $\{P_{t,s}^{\nu,v}\}_{0\le t\le s}$ denotes the (time inhomogeneous) Markovian semi-group on $M$ whose infinitesimal generator is $\nu\Delta_g-v(s,\cdot)\cdot \nabla$.
Hence we have for any $t\in [0,T_0]$,
\begin{equation*}
\begin{split}
\|\div \theta (t)\|_p& \le \int_t^{T_0}
\left(\|P_{t,s}^{\nu,v}\left(\Lambda_1(s)\right)\|_p + \|P_{t,s}^{\nu,v}\left(\Lambda_2(s)\right)\|_p\right)ds\\
&\le \int_t^{T_0}
\left(\|P_{t,s}^{\nu,v}\left(|\Lambda_1(s)|^p\right)\|_1^{1/p} + \|P_{t,s}^{\nu,v}\left(|\Lambda_2(s)|^p\right)\|_1^{1/p}\right)ds\\
&\le c_3\int_t^{T_0}
\left(\|\Lambda_1(s)\|_p+\|\Lambda_2(s)\|_p\right)ds\\
&\le c_4(1+\sup_{s\in [0,T_0]}\|v(s)\|_{2,p})\cdot \int_t^{T_0}\|\theta(s)-v(s)\|_{1,p}ds,
\end{split}
\end{equation*}
where in the third inequality we have applied \eqref{l4-1-2} and the last inequality follows from \eqref{l3-2-0a}.
Then, combining this
with \eqref{t4-2-8} yields
\begin{equation*}
\begin{split}
\|\theta(t)-v(t)\|_{1,p}\le c_5\int_t^{T_0}\|\theta(s)-v(s)\|_{1,p}ds,\ \forall\ t\in [0,T_0].
\end{split}
\end{equation*}
From Gronwall's lemma we obtain
\begin{equation*}
\|\theta(t)-v(t)\|_{1,p}=0,\ \ \forall\ t\in [0,T_0],
\end{equation*}
which implies that $\theta(t)=v(t)$ and $\div \theta(t)=0$ for all $t\in [0,T_0]$. Therefore, by \eqref{t4-2-7},
we have
\begin{equation*}
\begin{split}
v(t,x)=&v_0(x)+\int_t^{T_0} \nu\Delta v(s,x)ds-\int_t^{T_0} \nabla_{v(s)}v(s)(x)ds
+\int_t^{T_0} F_{v(s)}(x)ds,\ t\in [0,T_0],
\end{split}
\end{equation*}
which means that $v\in C\left([0,T_0];W^{2,p}_{{\rm div}}(\Gamma(TM))\right)$ is a solution of \eqref{e3-2}.

{\bf Step (iii)} According to \eqref{t4-2-3}, \eqref{t4-2-4}, \eqref{l3-4-2} and
Sobolev's embedding theorem we have
\begin{equation*}
\begin{split}
\lim_{n\to \infty}\sup_{s\in [0,T_0]}
\left(\|w_n(s)-v(s)\|_\infty+\|F_{w_n(s)}-F_{v(s)}\|_\infty\right)=0.
\end{split}
\end{equation*}
Based on these estimates 
and analogously to the proof of \eqref{l4-5-5} (as well as the standard approximation arguments from
a sequence of $\C_{T_0}^\infty$ to $v\in C\left([0,T_0];W^{2,p}_{{\rm div}}(\Gamma(TM))\right)$), we can show that,
for every $0\le t\le s\le T_0$ and $u\in \F(M)$,
\begin{equation}\label{add-ref}
\begin{split}
\lim_{n \to \infty}\E\left[\left|\tilde Y_{s,n}^{t,u}-\tilde Y_s^{t,u}\right|^2+
\sum_{i=1}^k \int_t^{T_0}\left|\tilde Z_{s,n}^{t,u,i}-\tilde Z_s^{t,u,i}\right|^2 ds\right]=0,
\end{split}
\end{equation}
where
$\Big(U_\cdot^{t,u},\tilde Y_\cdot^{t,u},\tilde Z_\cdot^{t,u}\Big)$
$\in \mathscr{C}([t,T_0];\mathscr{F}(M))\times\mathscr{C}([t,T_0];\R^{d})\times \mathscr{M}([t,T_0];\R^{dk})$
is the solution of following FBSDE,
\begin{equation*}
\begin{cases}
&dU_{s}^{t,u}=\sqrt{2\nu}\sum_{i=1}^k \h_{A_i}(U_{s}^{t,u})\circ dB_s^i-\h_{v(s)}(U_{s}^{t,u})ds,\ 0\le t\le s \le T_0,\\
&d\tilde Y_{s}^{t,u}=\sum_{i=1}^k \tilde Z_{s}^{t,u,i}dB_s^i-\tilde F_{v(s)}(U_{s}^{t,u})ds,\\
& U_{t,n}^{t,u}=u,\ \tilde Y_{T_0}^{t,u}=\S(v_0)(U_{T_0}^{t,u}).
\end{cases}
\end{equation*}
Also note that $\theta_{n+1}(x)=u\tilde Y_{t,n}^{t,u}$ for every $u\in \F(M)$ with $\pi(u)=x$.
According to the properties
\begin{equation*}
\begin{split}
& \lim_{n\to \infty}\|\theta_n(t)-v(t)\|_\infty=
\lim_{n\to \infty}\|\theta_n(t)-\theta(t)\|_\infty=0,\\
& \lim_{n \to \infty}\left|\tilde Y_{t,n}^{t,u}-\tilde Y_t^{t,u}\right|=
\lim_{n \to \infty}\E\left[\left|\tilde Y_{t,n}^{t,u}-\tilde Y_t^{t,u}\right|\right]=0
\end{split}
\end{equation*}
we deduce  that $v(t,x)=u \tilde Y_t^{t,u}$ for every $u\in \F(M)$ with $\pi(u)=x$.

Combining all  above properties we conclude that
$\Big(U_s^{t,u},\tilde Y_s^{t,u},\tilde Z_s^{t,u},v\Big)$ is a solution for \eqref{e4-3}.

{\bf Step (iv)} Now we assume that $v_0\in W^{2,p}(\Gamma(TM))\cap L_{{\rm div}}^p(\Gamma(TM))$. Then there exists a sequence
$\{v_{0,m}\}_{m=1}^\infty\subset C_b^\infty(\Gamma(TM))$ such that
\begin{equation*}
\lim_{m \to \infty}\|v_{0,m}-v_0\|_{2,p}=0.
\end{equation*}
By the analysis before we know that the constants $T_0$ and $K_0$ chosen above only depend on $\|v_0\|_{2,p}$. Hence we can find $T_0$,
$K_0>\sup_{m\ge 1}\|v_{0,m}\|_{2,p}$, which are uniform in
$m\ge 1$, such that, for every $m\ge 1$, there exists a solution $\Big(U_s^{t,u,m},\tilde Y_s^{t,u,m},\tilde Z_s^{t,u,m},v_m\Big)$ of \eqref{e4-3}
in the time interval $0\le t\le s\le T_0$ with terminal value $v_{0,m}$ which satisfies
$\Big(U_\cdot^{t,u,m},\tilde Y_\cdot^{t,u,m},\tilde Z_\cdot^{t,u,m}\Big)$
$\in \mathscr{C}([t,T_0];\mathscr{F}(M))\times\mathscr{C}([t,T_0];\R^{d})\times \mathscr{M}([t,T_0];\R^{dk})$
and $v_m\in C\big([0,T_0];$ $W_{{\rm div}}^{2,p}(\Gamma(TM))\big)$. Moreover, the following estimate holds
\begin{equation}\label{t4-2-9}
\sup_{m\ge 1}\sup_{t\in [0,T_0]}\|v_m\|_{2,p}\le K_0.
\end{equation}

By equation \eqref{e4-3} we have, for
$n,m\ge 1$ and $t\in [0,T_0]$,
\begin{align*}
&\int_M\left|\h_{A_j}\left(\tilde Y_t^{t,\cdot,n}-\tilde  Y_t^{t,\cdot,m}\right)(u(x))  \right|^p\,\mu(dx)\\
\le& c_6\Big(\int_M \E\left[\left|\h_{A_j}\big(\S\left(v_{0,n}\right)(U_{T_0}^{t,\cdot,n})\big)(u(x))-
\h_{A_j}\big(\S\left(v_{0,n}\right)(U_{T_0}^{t,\cdot,m})\big)(u(x))\right|^p\right]\mu(dx)\\
+& \int_M \E\left[\left|\h_{A_j}\big(\S\left(v_{0,n}\right)(U_{T_0}^{t,\cdot,m})\big)(u(x))-
\h_{A_j}\big(\S\left(v_{0,m}\right)(U_{T_0}^{t,\cdot,m})\big)(u(x))\right|^p\right]\mu(dx)\\
+&\quad T_0^{p-1}\int_{t}^{T_0}\int_M \E\left[\left|\h_{A_j}\big(\tilde F_{v_n(s)}(U_{s}^{t,\cdot,n})\big)(u(x))-
\h_{A_j}\big(\tilde F_{v_m(s)}(U_{s}^{t,\cdot,m})\big)(u(x))\right|^p\right]\mu(dx)ds\Big).
\end{align*}
Hence, applying \eqref{l4-2-1}, \eqref{l4-4-2}, \eqref{t4-2-9}, as well as the standard approximation
arguments, we have
\begin{equation}\label{t4-2-9a}
\begin{split}
&\int_M\left|\h_{A_j}\left(\tilde Y_t^{t,\cdot,n}-\tilde  Y_t^{t,\cdot,m}\right)(u(x))  \right|^p\,\mu(dx)\\
&\le c_{7}\Bigg(e^{c_{7}\mathbf{C}_1(p,T_0)}\left( \big(\mathbf{C}_1(p,T_0)+1\big)T^pe^{c_{7}\mathbf{C}_1(2p,T_0)}+T^pe^{c_{7}\mathbf{C}_1(p,T_0)} +e^{c_{7}\widetilde{\mathbf{C}}_1(T_0)}
\left(T_0^p+T_0^{\frac{1}{2}}+T_0^{p+\frac{1}{2}}K_0^p\right)\right)\\
&\quad \cdot \left(K_0^p
+T_0^p\bigl( \nu^pK_0^p+K_0^{2p} \bigr)\right)
+\bigl(e^{c_7\mathbf{C}_1(p,T_0)} + e^{c_7\widetilde{\mathbf{C}}_1(T_0)}\bigr)T_0^p
\cdot\bigl(K_0^p+\nu^p\bigr)\Bigg)\cdot \sup_{s\in [0,T_0]}\|w_1(s)-w_2(s)\|^p_{1,p}\\
&+c_7\bigl( e^{c_7\mathbf{C}_1(p,T_0)} + e^{c_7\widetilde{\mathbf{C}}_1(T_0)}\bigr)\|v_{0,n}-v_{0,m}\|^p_{1,p},
\end{split}
\end{equation}
where $\CC_1(T)$, $\mathbf{C}_1(p,T)$ are defined as the same way of \eqref{l4-2-0} with $K_T(w)$ replaced by $K_0$.

Also note that $\S(v_n)(u)=\tilde Y_t^{t,u,n}$. Applying  similar arguments to those above for
\[
    \int_M \left|\left(\tilde Y_t^{t,\cdot,n}-\tilde  Y_t^{t,\cdot,m}\right)(u(x)) \right|^p\mu(dx)
\]
 and choosing $T_0$ small enough if necessary (by the expression \eqref{t4-2-9a} and the fact that $\lim_{T \to 0}(\CC_1(p,T)+\mathbf{C}_1(T))=0$) we will obtain
\begin{equation*}
    \sup_{t\in [0,T_0]}\|v_n(t)-v_m\|^p_{1,p} \le \frac{1}{2}\sup_{t\in [0,T_0]}\|v_n(t)-v_m(t)\|^p_{1,p}
    +c_8\|v_{0,n}-v_{0,m}\|_{1,p}^p.
    \end{equation*}
Therefore we have
\begin{align*}
\sup_{t\in [0,T_0]}\|v_n(t)-v_m(t)\|_{1,p} \le c_{9}\|v_{0,n}-v_{0,m}\|_{1,p}.
\end{align*}
So $\{v_m\}_{m\ge 1}$ is a Cauchy sequence in $C\left([0,T_0]; W^{1,p}(\Gamma(TM))\right)$ and
there exists  $v\in C\left([0,T_0]; W^{1,p}(\Gamma(TM))\right)$ such that
\begin{equation}\label{t4-2-10}
\lim_{m \to \infty}\sup_{t\in [0,T_0]}\|v_m(t)-v(t)\|_{1,p}=0.
\end{equation}
On the other hand, by \eqref{t4-2-9} we know that $\{v_m\}_{m\ge 1}$ is weakly compact in $C\left([0,T_0];W^{2,p}(\Gamma(TM))\right)$ and
$v$ is the weak limit of $\{v_m\}_{m\ge 1}$, so we have $v\in C\left([0,T_0];W^{2,p}_{{\rm div}}(\Gamma(TM))\right)$.
Moreover, by the conclusion of {\bf Step (iii)},  $v_m$ is the solution of \eqref{e3-2} with terminal value
$v_m(T,x)=v_{0,m}(x)$. Letting $m\to \infty$ in \eqref{e3-2} and applying \eqref{t4-2-10} we deduce that
$v\in C\left([0,T_0];W^{2,p}_{{\rm div}}(\Gamma(TM))\right)$ is a solution of \eqref{e3-2}.  Still according to \eqref{t4-2-10}, taking
$m\to \infty$ in \eqref{e4-3} with terminal value $v_{0,m}$ and using
the same arguments as for the proof of \eqref{t4-2-9a},  we conclude that there exists $\left(U_\cdot^{t,u},\tilde Y_\cdot^{t,u},\tilde Z_\cdot^{t,u}\right)\in $  $\mathscr{C}([t,T_0];\mathscr{F}(M))\times\mathscr{C}([t,T_0];\R^{d})\times \mathscr{M}([t,T_0];\R^{dk})$
(for all $0\le t\le T_0$) such that $\left(U_s^{t,u},\tilde Y_s^{t,u},\tilde Z_s^{t,u},v\right)$ is a solution to \eqref{e4-3}.

On the other hand, suppose that $\Big(\hat U_s^{t,u},\hat Y_s^{t,u},\hat Z_s^{t,u},\hat v\Big)$  is also a solution of
\eqref{e4-3} satisfying
$\left(\hat U_\cdot^{t,u},\hat Y_\cdot^{t,u},\hat Z_\cdot^{t,u}\right)\in $ $\mathscr{C}([t,T_0];\mathscr{F}(M))\times\mathscr{C}([t,T_0];\R^{d})\times \mathscr{M}([t,T_0];\R^{dk})$
(for all $0\le t\le T_0$) and
$\hat v\in C\left([0,T_0];W_{{\rm div}}^{2,p}\left(\Gamma(TM)\right)\right)$. Since
$\S(\hat v)(u)=\hat Y_t^{t,u}$ and $\S(v)(u)=\tilde Y_t^{t,u}$,
by \eqref{e4-3}, using the same arguments as in the proof of \eqref{t4-2-9a} (and choosing $T_0$ smaller if necessary) we
can show that
\begin{align*}
\sup_{t\in [0,T_0]}\|\hat v(t)-v(t)\|_{1,p}\le \frac{1}{2}\sup_{t\in [0,T_0]}\|\hat v(t)-v(t)\|_{1,p}.
\end{align*}
This implies  that $\sup_{t\in [0,T_0]}\|\hat v(t)-v(t)\|_{1,p}=0$, so we have
$v(t)=\hat v(t)$ for all $t\in [0,T_0]$.
In particular we remark that, although
the estimate \eqref{t4-2-9a} is proved under the condition  $v_n,v_m\in \C_{T_0}^\infty$, by the standard approximation
procedure it still holds under the weaker condition $v_n,v_m\in C\left([0,T_0];W_{{\rm div}}^{2,p}(\Gamma(TM))\right)$
(note that \eqref{l4-4-7} only depends on $K_T(w_1,w_2)$ and $\sup_{s\in [0,T]}\|w_1(s)-w_2(s)\|_{1,p}$).

Observe that, with $w\in C\left([0,T_0];W_{{\rm div}}^{2,p}(\Gamma(TM))\right)$ and $v_0\in W^{2,p}(\Gamma(TM))\cap L^p_{{\rm div}}(\Gamma(TM))$
fixed, the solution for the (de-coupled) FBSDE \eqref{e4-4} is unique. Since $v(t)=\hat v(t)$ and from
the expression \eqref{e4-3} we deduce that $\left(U_s^{t,u},\tilde  Y_s^{t,u},\tilde  Z_s^{t,u}\right)$=$\left(\hat U_s^{t,u},\hat Y_s^{t,u},\hat Z_s^{t,u}\right)$.
So we have verified that  the solution $\left(U_s^{t,u},\tilde  Y_s^{t,u},\tilde  Z_s^{t,u},v \right)$ of \eqref{e4-3} is unique.

\end{proof}

\noindent {\bf Acknowledgements.}\,\,  The research of Xin Chen is supported by National Natural Science Foundation of China
(No. 12122111), Ana Bela Cruzeiro acknowledges Portuguese FCT project UIDB/00208/2020, Qi Zhang is supported by National Key R\&D Program of China (No. 2022YFA1006101), National Natural Science Foundation of China (No. 11871163) and the Science and Technology Commission of Shanghai Municipality (No. 22ZR1407600), Wenjie Ye is supported by National Key R\&D Program of China (No. 2020YFA0712700), the NSFC (No.
11931004, 12090014, 12288201) and key Lab of Random Complex Structures and Data Science, Youth Innovation Promotion Association (2020003), Chinese Academy of Science.


\end{document}